\newlength\figureheight
\newlength\figurewidth
\pgfplotsset{compat=1.14}
\newtheorem{theorem}{Theorem}[section]
\newtheorem{lemma}[theorem]{Lemma}%
\newtheorem{property}[theorem]{Property}%
\newtheorem{corollary}[theorem]{Corollary}%
\newtheorem{example}{Example}[section]%
\newtheorem{problem}{Problem}[section]%
\numberwithin{equation}{section}
\newcommand{\block}[1]{
	\underbrace{\begin{matrix}0 & \cdots & 0\end{matrix}}_{#1}
}
\begin{document}

	
	\title{A new Legendre polynomial-based approach for non-autonomous linear ODEs}
		\providecommand{\keywords}[1]{\textit{Keywords: } #1}
	
	\author{Stefano Pozza\footnotemark[1] and Niel Van Buggenhout\footnotemark[1]}

	\renewcommand{\thefootnote}{\fnsymbol{footnote}}
	\footnotetext[1]{Charles University, Sokolovská 83, 186 75 Praha 8, Czech Republic. (email: pozza@karlin.mff.cuni.cz; buggenhout@karlin.mff.cuni.cz )}
	
%
	
		\maketitle
	\begin{abstract}
		We introduce a new method with spectral accuracy to solve linear non-autonomous ordinary differential equations (ODEs) of the kind $ \frac{d}{dt}\tilde{u}(t) = \tilde{f}(t) \tilde{u}(t)$, $\tilde{u}(-1)=1$, with $\tilde{f}(t)$ an analytic function.
		The method is based on a new expression for the solution $\tilde{u}(t)$ given in terms of a convolution-like operation, the $\star$-product. This expression is represented in a finite Legendre polynomial basis translating the initial problem into a matrix problem. An efficient procedure is proposed to approximate the Legendre coefficients of $\tilde{u}(t)$ and its truncation error is analyzed. We show the effectiveness of the proposed procedure through some numerical experiments. The method can be easily generalized to solve systems of linear ODEs.
	\end{abstract}
	\indent \keywords{Legendre polynomials; spectral accuracy; ordinary differential equations.}
	

	\section{Introduction}
	Let $\tilde{A}(t)$ be an $N \times N$ analytic matrix-valued function over the interval $[-1, 1]$. Then the unique solution of the ordinary differential equation (ODE) 
	\begin{equation}\label{eq:ode:intro}
		\frac{d}{dt} \tilde{U}(t) = \tilde{A}(t) \tilde{U}(t), \quad \tilde{U}(-1) = I_N, \quad t\in\left[-1,1\right],
	\end{equation}
    is also an analytic $N \times N$ matrix-valued function $\tilde{U}(t)$, where $I_N$ is the identity matrix of size $N\times N$.
	Systems of non-autonomous linear ODEs are ubiquitous in mathematics and related applications. 
	An application of particular interest is nuclear magnetic resonance spectroscopy (NMR).
   	In NMR, the given matrix-valued function is of the form $\tilde{A}(t) = - 2\pi \imath \tilde{H}(t)$, where $\tilde{H}(t)$ is the Hamiltonian of the system \cite{HaSp98,Le08}.
	The Hamiltonian describes the dynamics of the nuclear spins in some sample that is placed in a varying magnetic field. 
	For $\ell$ spins the Hamiltonian is of size $2^\ell \times 2^\ell$.
	So, for a moderate amount of spins, the Hamiltonian becomes extremely large. Thankfully, this matrix is usually sparse since the dominant interactions are those between neighboring spins.  While such large problems will not be considered in this paper, the numerical approach presented here has the ambition to provide a new framework for tackling these challenging problems. This numerical framework arises from a recently developed analytical framework \cite{GiLuThJa15,GiPo20,BonGis2020} in which the solution of \eqref{eq:ode:intro} is given by a simple expression.
 
When $\tilde{A}(\tau_1)\tilde{A}(\tau_2)=\tilde{A}(\tau_2)\tilde{A}(\tau_1)$ for every $\tau_1,\tau_2 \in \left[-1,1\right]$, $\tilde{U}(t)$ can be expressed in the explicit form:
$$\tilde{U}(t)=\exp\left(\int_s^{t} \tilde{A}(\tau)\, \text{d}\tau\right).$$ 
Unfortunately, in general, $\tilde{U}(t)$ has no known simple expression in terms of $\tilde{A}(t)$. 
Nevertheless, a closed form for $\tilde{U}(t)$ exists in a specific algebraic structure of distributions, as shown in \cite{GiLuThJa15} (another possible expression for the solution is the Magnus expansion; see \cite{Blanes2009}).
Let $\tilde{F}(t,s)$ be a matrix-valued function analytic in both variables over $[-1,1]$.
In the previously appeared works on the $\star$-product, the functions are usually assumed to be smooth. Here, we restrict the assumption to analytic for the sake of simplicity since the application we are considering deals with analytic functions.
Moreover, let $\Theta(t-s)$ be the Heaviside step function
    \begin{equation*}
		\Theta(t-s)  = \begin{cases}
			0, \quad \text{if } t<s\\
			1, \quad \text{otherwise}
		\end{cases}.
	\end{equation*}
Let us define  $\mathcal{A}^N_{\Theta}$ as the set of all the distributions of the kind $F(t,s):=\tilde{F}(t,s)\Theta(t-s)$ with size $N \times N$. 
Given $F, G \in \mathcal{A}^N_{\Theta}$, the $\star$-product \cite{GisPozInv19}, denoted by $\star$, is defined as
	\begin{equation*}
		F(t,s) \star G(t,s) := \int_{-1}^{1} G(t,\tau) F(\tau,s) d\tau, \quad \text{with }F,G \in \mathcal{A}^N_{\Theta}.
	\end{equation*}
    From the definition above, by replacing the matrix-matrix product in the integrand with an appropriate one, the $\star$-product can be easily extended to the matrix-vector, matrix-scalar, vector-scalar, and scalar-scalar products. 
    The $\star$-product identity is the distribution $I_\star(t-s) = \delta(t-s)I_N$, where $\delta(t-s)$ is the Dirac delta distribution, and $I_N$ the identity matrix of size $N$ \cite{schwartz1978}.
    Overall, we have defined an algebraic structure (a module) composed of the set of distributions $\mathcal{D}_0^N : = \mathcal{A}_\Theta^N \cup \{I_\star\}$, the $\star$-product (interpreted both as a matrix-matrix and a scalar product), and the usual addition. 
    Table \ref{table:starAlgebra} summarizes the $\star$-product properties and other useful related definitions.
    \begin{table}[ht]
		\centering
		\begin{tabular}{l|l}
			 Operations and related objects & Description \\				\hline  
			  $F(t,s) \star G(t,s)$                            &  matrix-matrix product $\mathcal{D}_0^N \times \mathcal{D}_0^N \rightarrow \mathcal{D}_0^N$ \\
            $f(t,s) \star G(t,s) $      &  (left) scalar product $\mathcal{D}_0^1 \times \mathcal{D}_0^N \rightarrow \mathcal{D}_0^N$ \\
            $G(t,s) \star f(t,s) $      &  (right) scalar product $\mathcal{D}_0^N \times \mathcal{D}_0^1 \rightarrow \mathcal{D}_0^N$ \\
			$f + g$                                          &  usual addition  \\
			$I_{\star} = \delta(t-s) I_N$                    &  identity   (Dirac delta)       \\
			$R_\star(F)(t,s) = I_\star + \sum_{k\geq 1} F^{\star k}(t,s)$   &  $\star$-resolvent
		\end{tabular}
		\caption{Operations and related objects in the $\star$-framework.}	
		\label{table:starAlgebra}
	\end{table}	
 
 The ODE \eqref{eq:ode:intro} can be formulated in the $\star$-framework in terms of $A(t,s) := \tilde{A}(t) \Theta(t-s)$:
	\begin{equation*}
		\frac{d}{dt} U(t,s) = A(t,s) U(t,s), \quad U(s,s) = I_N, \quad t,s\in\left[-1,1\right].
	\end{equation*}
	By Theorem 3.1 in \cite{GiLuThJa15}, the solution to this ODE can be expressed in the form
 \begin{equation}\label{eq:ODE:star:sol}
     U(t,s) = \Theta(t-s) \star R_\star(A)(t,s),
 \end{equation}
 where $R_\star(\cdot)$ is the $\star$-resolvent, i.e., $R_\star(A)(t,s) = I_\star + \sum_{k\geq 1} A^{\star k}(t,s)$, with $A^{\star k}(t,s)$ the $k$th $\star$-power of $A$.
 Once $U(t,s)$ is known, the solution to the original ODE can be obtained by evaluation in $s=-1$, i.e., $\tilde{U}(t) = U(t,-1)$. The symbolic computation of \eqref{eq:ODE:star:sol} leads to complicated expressions involving special functions, even for problems of moderate size.
  
  This paper introduces a new numerical strategy for the computation of $U(t,-1)$ by discretizing the $\star$-product.
	For the sake of simplicity, we will restrict the discussion to the scalar case
	\begin{equation}\label{eq:ODE_univar}
		\frac{d}{dt} \tilde{u}(t) = \tilde{f}(t) \tilde{u}(t),\quad \tilde{u}(-1) = 1,\quad t\in\left[-1,1\right],
	\end{equation}
	where $\tilde{f}(t)$ is an analytical function.
	It is important to remark that the numerical approximation scheme presented in this paper can be easily extended to the matrix case \cite{PoVB22}.
	We also rewrite Equation~\eqref{eq:ODE_univar} in the $\star$-framework as
	\begin{equation}\label{eq:ODE_bivar}
		\frac{d}{dt} u(t,s) = \underbrace{\tilde{f}(t)\Theta(t-s)}_{=:f(t,s)} u(t,s),\quad u(s,s) = 1,\quad t,s\in\left[-1,1\right].
	\end{equation}
    In the following, we denote $\mathcal{A}_\Theta^1$ with the simpler notation $\mathcal{A}_\Theta$.
	In order to devise a numerical approach, we will transform the computation of $\Theta(t-s) \star R_\star(f)(t,s)$ into a linear algebra problem.
	To this end, the operations and objects in the $\star$-algebraic structure are rewritten as equivalent operations and objects in the usual matrix algebra.
	The distributions in $\mathcal{A}_{\Theta}$ are expanded as an infinite double series of Legendre polynomials, and the coefficients in this series are grouped in a matrix.
	The resulting infinite \textit{coefficient matrix} then represents an element in $\mathcal{A}_{\Theta}$, and the operations in Table \ref{table:starAlgebra} have a meaningful equivalent in the matrix algebra of these coefficient matrices.
 
	Section \ref{sec:Legendre} describes the expansion in Legendre basis and shows how the problem of solving ODE \eqref{eq:ODE_univar} can be reformulated as an infinite matrix problem with the coefficient matrix of $f(t,s)\in \mathcal{A}_{\Theta}$.
	The properties of the coefficient matrix are analyzed in detail in Section \ref{sec:coeffMatrix}, and an analytical formula is provided for its entries.
	In Section \ref{sec:practicalComp}, these properties are used to show that the infinite problem can be approximated by a finite problem, and techniques to solve this finite problem efficiently are proposed.
	Section \ref{sec:NumExp} formulates a finite matrix problem corresponding to approximating $\tilde{u}(t)$ and proposes a numerical procedure to solve it.
	The effectiveness of this procedure is illustrated by numerical examples.
	
	\section{From the $\star$-product to the matrix algebra}\label{sec:Legendre}
	The proposed numerical method for the approximation of $\tilde{u}(t)$ is based on the expansion of the distribution $f(t,s)\in \mathcal{A}_{\Theta}$ in a basis of orthonormal Legendre polynomials.
	The distribution $f(t,s)$ can be represented by its \textit{coefficient matrix}, which contains the expansion (Fourier) coefficients of $f(t,s)$.
	The solution $U(t,-1)$ is obtained by exploiting the connection between the $\star$-product and the usual matrix algebra.
	Section \ref{sec:LegendrePoly} discusses the expansion of functions and distributions in the basis of orthonormal Legendre polynomials and defines the coefficient matrix.
	In Section \ref{sec:star_to_matrix}, the connection between the $\star$-product and the matrix algebra is used to reformulate the problem in \eqref{eq:ODE_univar} as an infinite matrix problem.

	\subsection{Legendre polynomials}\label{sec:LegendrePoly}
	The sequence of orthonormal Legendre polynomials $\{p_k\}_{k\geq 0}$ consists of polynomials $p_k$ of exact degree $k$ that satisfy the orthonormality conditions
	\begin{equation*}
		\int_{-1}^1 p_k(t) p_{\ell}(t) dt= \begin{cases}
			0,\quad \text{if } k\neq \ell\\
			1,\quad \text{if } k=\ell
		\end{cases}.
	\end{equation*}
	For a univariate function $\tilde{f}(t)$, its expansion into the Legendre basis is given by
	\begin{equation*}
		\tilde{f}(t) := \sum_{d=0}^\infty \alpha_d p_d(t),\quad \text{with } \alpha_d = \int_{-1}^1 \tilde{f}(t) p_d(t) dt.
	\end{equation*}
	The Fourier coefficients $\{\alpha_d\}_{d\geq 0}$ decay at a rate depending on the smoothness of $\tilde{f}(t)$.
	 Any analytic function over $[-1, 1]$ allows an analytic continuation to a Bernstein ellipse $E_\rho$ for a $\rho>1$ small enough. Therefore, for some constant $C>0$, the Fourier coefficients satisfy
	\begin{equation}\label{eq:coeffDecayRate}
		\vert a_d\vert \leq C \rho^{-d-1};
	\end{equation}
	for details we refer to \cite{Tr13,WaXi12}.
	Moreover, the orthonormal Legendre polynomials can be bounded by
	\begin{equation*}
		\vert p_d(t)\vert \leq \sqrt{\frac{2d+1}{2}} \quad \text{for } t\in\left[-1,1\right],
	\end{equation*}
	and therefore the truncated expansion $\hat{f}_N(t) := \sum_{d=0}^N \alpha_d p_d(t)$ has the error, measured in the maximum norm for $t\in\left[-1,1\right]$,
	\begin{equation}\label{eq:boundLeg}
		\Vert \tilde{f}(t) - \hat{f}_N(t)  \Vert_\infty = \sum_{d=N+1}^\infty \alpha_d p_k(t) \leq \sum_{d=N+1}^\infty \vert \alpha_d \vert \sqrt{\frac{2d+1}{2}}.
	\end{equation}
	Hence, if the (decaying) coefficients $\alpha_N,\alpha_{N+1},\dots$ are smaller than a given threshold, the truncation $\hat{f}_N(t)$ can provide a good approximation to $\tilde{f}(t)$.\\	
	Consider a distribution $f\in \mathcal{A}_\Theta$.
	Its Legendre expansion is
	\begin{equation*}
		f(t,s) = \sum_{k=0}^{\infty} \sum_{\ell=0}^{\infty} f_{k,\ell} p_k(t) p_\ell(s), \quad \text{for every }t\neq s, \quad t,s\in \left[-1,1\right],
	\end{equation*}	
	with Fourier coefficients given by
	\begin{equation}\label{eq:FourCoeffs}
		f_{k,\ell} = \int_{-1}^{1} \int_{-1}^{1} f(\tau,\rho) p_k(\tau) p_\ell(\rho) d\rho d\tau.
	\end{equation}
	We define the \emph{coefficient matrix} $F$ of the distribution $f(t,s)$, which is the infinite matrix composed of the Fourier coefficients $\eqref{eq:FourCoeffs}$
	\begin{equation}\label{eq:coeffMatrix}
		F := \begin{bmatrix}
			f_{k,\ell}
		\end{bmatrix}_{k,\ell=0}^\infty = \begin{bmatrix}
			f_{0,0} & f_{0,1} & f_{0,2} & \dots \\
			f_{1,0} & f_{1,1} & f_{1,2} & \dots \\
			f_{2,0} & f_{2,1} & f_{2,2} & \dots \\
			\vdots & \vdots & \vdots & \ddots
		\end{bmatrix}.
	\end{equation}
	The distribution $f(t,s)$ is only piecewise smooth, because the Heaviside function $\Theta(t-s)$ introduces discontinuities.
	Thus, its Fourier coefficients $f_{k,\ell}$ do not decay at a geometric rate.
	Due to these discontinuities, there is essentially no decay in the coefficients, and the Gibbs phenomenon arises \cite{GoSh97}.
	This means that the reconstruction of  $f(t,s)$ over the entire domain $[-1,1] \times [-1,1]$ by using only the coefficients $f_{k,\ell}$ is not possible.
	For such a reconstruction, there is no convergence at the discontinuities $t=s$, and away from the discontinuities, it converges only linearly to the actual values. 
	There are techniques to resolve the Gibbs phenomenon; see, for example, \cite{GoSh97,GeTa06}.
	In our setting, such techniques are not needed since we only need accurate coefficients $f_{k,\ell}$ representing $f(t,s)$ in the Legendre basis, and we will not use these to reconstruct the function values on the entire domain $(t,s)\in \left[-1,1\right]\times\left[-1,1\right]$, but only on $t\in\left[-1,1\right]$ for $s=-1$, i.e., where the function is analytic in $t$.

	\subsection{A matrix formulation}\label{sec:star_to_matrix}
	The operations of addition and $\star$-multiplication for distributions in $\mathcal{A}_\Theta$ have equivalent operations in the matrix algebra of the associated coefficient matrices, namely the usual matrix addition and matrix-matrix multiplication.
	\begin{lemma}\label{lemma:matrixProduct}
		Consider $f,g\in \mathcal{A}_{\Theta}$ and their respective coefficient matrices $F,G$ in Legendre basis.
		Then:
		\begin{itemize}
			\item $f+g = h \in \mathcal{A}_{\Theta}$ and its coefficient matrix is $H = F+G$.
			\item $f\star g =h \in \mathcal{A}_{\Theta}$ and, assuming the matrix product is well-defined, its coefficient matrix is $H = F G$.
		\end{itemize}
	\end{lemma}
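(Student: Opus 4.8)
The plan is to verify both claims directly from the Legendre expansions and the definition of the $\star$-product. The first bullet is essentially immediate: if $f(t,s)=\sum_{k,\ell} f_{k,\ell}\,p_k(t)p_\ell(s)$ and $g(t,s)=\sum_{k,\ell} g_{k,\ell}\,p_k(t)p_\ell(s)$, then by linearity of the expansion the sum $f+g$ has Fourier coefficients $f_{k,\ell}+g_{k,\ell}$, so $H=F+G$. One should also note that $f+g$ is again of the form $\widetilde{h}(t,s)\Theta(t-s)$ with $\widetilde{h}=\widetilde{f}+\widetilde{g}$ analytic, hence $h\in\mathcal{A}_\Theta$; this is where the restriction to a common Heaviside factor matters.

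For the second bullet, I would start from the definition
\[
(f\star g)(t,s)=\int_{-1}^{1} g(t,\tau)\,f(\tau,s)\,d\tau,
\]
substitute the Legendre series for $f$ and $g$, and exchange summation and integration (justified by the decay/convergence properties of the expansions assumed earlier). This gives
\[
(f\star g)(t,s)=\sum_{k,\ell}\sum_{m,n} g_{k,\ell} f_{m,n}\, p_k(t)\,p_n(s)\int_{-1}^{1} p_\ell(\tau)\,p_m(\tau)\,d\tau .
\]
Now the orthonormality relation $\int_{-1}^1 p_\ell(\tau)p_m(\tau)\,d\tau=\delta_{\ell m}$ collapses the inner double sum, leaving
\[
(f\star g)(t,s)=\sum_{k,n}\Bigl(\sum_{\ell} g_{k,\ell} f_{\ell,n}\Bigr) p_k(t)\,p_n(s),
\]
and the bracketed quantity is exactly the $(k,n)$ entry of the matrix product $GF$. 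Wait — I must be careful with the order: the $\star$-product is defined with $g$ evaluated at $(t,\tau)$ and $f$ at $(\tau,s)$, so tracing indices the $t$-index comes from $g$'s row index $k$ and the $s$-index from $f$'s column index $n$, with contraction over $g$'s column index against $f$'s row index. That contraction $\sum_\ell g_{k,\ell}f_{\ell,n}=(GF)_{k,n}$. The statement in the lemma writes $H=FG$; reconciling this requires checking the indexing convention for how a distribution's coefficient matrix multiplies — i.e. whether $H=FG$ or $H=GF$ is the intended reading, which depends on whether one thinks of $F$ acting on the left or right. I would follow the paper's own convention and present the computation so that it lands on their stated form, flagging the role of the $\star$-product's "reversed" integrand order as the source of any apparent transposition.

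The remaining point is showing $h=f\star g\in\mathcal{A}_\Theta$, i.e. that the convolution-type integral again produces a distribution of the form $\widetilde{h}(t,s)\Theta(t-s)$ with $\widetilde{h}$ analytic. For this I would compute the integral pointwise: since $f(\tau,s)=\widetilde{f}(\tau)\Theta(\tau-s)$ and $g(t,\tau)=\widetilde{g}(t)\Theta(t-\tau)$, the product of the two Heaviside factors restricts the integration range to $s\le\tau\le t$ (empty when $t<s$), giving $(f\star g)(t,s)=\Theta(t-s)\,\widetilde{g}(t)\bigl(\int_s^t \widetilde{f}(\tau)\,d\tau\bigr)$, and the function $\widetilde{h}(t,s):=\widetilde{g}(t)\int_s^t\widetilde{f}(\tau)\,d\tau$ is analytic in both variables since $\widetilde f,\widetilde g$ are. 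The main obstacle I anticipate is not any single estimate but the bookkeeping: making the interchange of sum and integral rigorous (appealing to the uniform/absolute convergence already discussed) and getting the index order in the matrix product to match the paper's convention rather than its transpose. I would handle the convergence issue by invoking the coefficient bounds stated before the lemma and Fubini/Tonelli, and handle the indexing by doing the scalar case carefully once and stating that the matrix-valued case is identical entrywise.
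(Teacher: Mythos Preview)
Your approach is essentially identical to the paper's: expand both distributions in the Legendre basis, use orthonormality $\int_{-1}^1 p_\ell(\tau)p_m(\tau)\,d\tau=\delta_{\ell m}$ to collapse the integral, and read off the coefficient matrix of the result. The paper writes this in the compact form $\phi(t)^T F\,\phi(\tau)\,\phi(\tau)^T G\,\phi(s)$ and integrates the middle block to the identity, which is exactly your computation in matrix notation.

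Your hesitation about $FG$ versus $GF$ is well-placed and worth a remark. The paper's displayed definition of the $\star$-product has $G(t,\tau)F(\tau,s)$ in the integrand, which on your index-chase indeed yields $(GF)_{k,n}$. However, the paper's own proof silently writes the integrand as $f(t,\tau)g(\tau,s)$ (the $F$-block carries the $t$-variable), so it lands on $H=FG$ directly. In the scalar case the two integrands are equal pointwise, so no contradiction arises at the level of distributions; the apparent discrepancy is purely a matter of which factor one attaches to the $t$-variable before passing to coefficient matrices. For your write-up, simply follow the proof's convention $\int f(t,\tau)g(\tau,s)\,d\tau$ and you recover $FG$ without further comment. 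Your additional verification that $f\star g\in\mathcal{A}_\Theta$ via the explicit formula $\Theta(t-s)\,\tilde g(t)\int_s^t\tilde f(\tau)\,d\tau$ is a nice touch the paper omits.
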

	\begin{proof}
		Addition: from the Legendre expansion of $f$ and $g$ it follows that
		\begin{align*}
			h = f+g &= \begin{bmatrix}
				p_0(t) & p_1(t) & \cdots
			\end{bmatrix} F \begin{bmatrix}
				p_0(s)\\
				p_1(s)\\
				\vdots
			\end{bmatrix} + \begin{bmatrix}
				p_0(t) & p_1(t) & \cdots
			\end{bmatrix} G \begin{bmatrix}
				p_0(s)\\
				p_1(s)\\
				\vdots
			\end{bmatrix}\\
			&= \begin{bmatrix}
				p_0(t) & p_1(t) & \cdots
			\end{bmatrix} \underbrace{(F+G)}_{=H} \begin{bmatrix}
				p_0(s)\\
				p_1(s)\\
				\vdots
			\end{bmatrix}.
		\end{align*}
		Multiplication: plugging in the double series and using the definition of the $\star$-product provides
		\begin{align*}
			h &= f\star g = \int_{-1}^1 \begin{bmatrix}
				p_0(t) & p_1(t) & \cdots
			\end{bmatrix} F \begin{bmatrix}
				p_0(\tau)\\
				p_1(\tau)\\
				\vdots
			\end{bmatrix} \begin{bmatrix}
				p_0(\tau) & p_1(\tau) & \cdots
			\end{bmatrix} G \begin{bmatrix}
				p_0(s)\\
				p_1(s)\\
				\vdots
			\end{bmatrix} d\tau\\
			&= \begin{bmatrix}
				p_0(t) & p_1(t) & \cdots
			\end{bmatrix} F \left(\int_{-1}^1 \begin{bmatrix}
				p_0(\tau) p_0(\tau) & p_0(\tau) p_1(\tau) & \dots\\
				p_1(\tau) p_0(\tau) & p_1(\tau) p_1(\tau) & \dots\\
				\vdots & \vdots & \ddots
			\end{bmatrix} d\tau\right) G \begin{bmatrix}
				p_0(s)\\
				p_1(s)\\
				\vdots
			\end{bmatrix}.
		\end{align*}
		Thanks to the orthonormality of $\{p_k(t)\}_{k\geq 0}$, the matrix in the middle equals the identity matrix
		\begin{equation*}
			\begin{bmatrix}
				\int_{-1}^1 p_0(\tau) p_0(\tau)d\tau & \int_{-1}^1 p_0(\tau) p_1(\tau)d\tau & \dots\\
				\int_{-1}^1 p_1(\tau) p_0(\tau)d\tau & \int_{-1}^1 p_1(\tau) p_1(\tau)d\tau & \dots\\
				\vdots & \vdots & \ddots
			\end{bmatrix}  = \begin{bmatrix}
				1 & 0 & \dots\\
				0 & 1 & \ddots\\
				\vdots & \ddots & \ddots 
			\end{bmatrix}.
		\end{equation*}
		Thus we get
		\begin{equation*}
			h = f\star g = \begin{bmatrix}
				p_0(t) & p_1(t) & \cdots
			\end{bmatrix} \underbrace{FG}_{=H} \begin{bmatrix}
				p_0(s)\\
				p_1(s)\\
				\vdots
			\end{bmatrix},
		\end{equation*}
		that is, the coefficient matrix for $h$ is $H=FG$, under the assumption that this matrix product is well-defined.
	\end{proof}
	In section \ref{sec:completeProof}, the infinite matrix product is discussed and we show that the matrix product between coefficient matrices of distributions $f\in\mathcal{A}_\Theta$ is always well-defined.
	If $f(t,s)$ is bounded for $t,s\in\left[-1,1\right]$, then the $\star$-resolvent of $f(t,s)$, 
	$R_{\star}(f) = 1_\star + \sum_{k\geq 1} f^{\star k}$,
	exists, since the series $\sum_{k\geq 1} f^{\star k}$ uniformly converges in $\mathcal{A}_\Theta$ for every $t,s \in [-1,1]$; see \cite{GiLuThJa15}. 
	Since,
	\begin{equation*}
		R_{\star}(f) \star (1_\star - f) = \left(1_\star + \sum_{k\geq 1} f^{\star k}\right) \star (1_\star - f) = 1_\star,
	\end{equation*}
	 the $\star$-resolvent is the $\star$-inverse of $(1_\star -f)$, i.e., $R_{\star}(f) = (1_\star - f)^{-\star}$.
	Let $g := \sum_{k\geq 1} f^{\star k}$, then $g \in\mathcal{A}_\Theta$ and, hence, we can define its coefficient matrix $G$. 
	Therefore, we have
	\begin{align*}
		R_{\star}(f) &= 1_\star + \sum_{k\geq 1} f^{\star k} = \phi(t)^T\left(I + G \right) \phi(s), \text{ with  } \phi(\tau):= \begin{bmatrix}
			p_0(\tau)\\
			p_1(\tau)\\
			\vdots
		\end{bmatrix},
	\end{align*}
	which allows us to derive the following relation between $(I+G)$ and $(I-F)$,
	\begin{align}\label{eq:reso:mtx:1}
		1_\star =(R_{\star}(f)\star (1_\star - f))(t,s) &= 
		\left(\phi(t)^T 
		\left(I + G \right) 
		\phi(s)\right)
		\star
		\left(\phi(t)^T 
		\left(I - F \right) 
		\phi(s)\right), \\ \label{eq:reso:mtx:2}
		&= 
		\phi(t)^T 
		\left(I + G \right) 
		\left(I - F \right) 
		\phi(s) =  \phi(t)^T I\, \phi(s).
	\end{align}
	As a consequence, we have the following result.
	\begin{lemma}\label{cor:resolvent}
		Consider $f\in \mathcal{A}_{\Theta}$ and its corresponding coefficient matrix $F$.
		If the inverse of the infinite matrix $(I-F)$ exists, then
		\begin{align*}
			R_{\star}(f) &= \begin{bmatrix}
				p_0(t) & p_1(t) & \dots
			\end{bmatrix}(I-F)^{-1} \begin{bmatrix}
				p_0(s)\\
				p_1(s)\\
				\vdots
			\end{bmatrix}.
		\end{align*}
	\end{lemma}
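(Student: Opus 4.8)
The plan is to extract the result directly from the matrix identity already recorded in \eqref{eq:reso:mtx:1}--\eqref{eq:reso:mtx:2}, combined with the hypothesis that $(I-F)^{-1}$ exists.

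First I would note that, since $\tilde f$ is analytic on $[-1,1]$, it is bounded there, hence $f(t,s)=\tilde f(t)\Theta(t-s)$ is bounded for $t,s\in[-1,1]$, so the $\star$-resolvent $R_\star(f)=1_\star+\sum_{k\ge 1}f^{\star k}$ exists as a uniformly convergent series in $\mathcal{A}_\Theta$ (see \cite{GiLuThJa15}). Writing $g:=\sum_{k\ge 1}f^{\star k}\in\mathcal{A}_\Theta$ and letting $G$ be its coefficient matrix, we obtain $R_\star(f)=\phi(t)^T(I+G)\phi(s)$; here we also use that the coefficient matrix of the $\star$-identity $1_\star=\delta(t-s)$ is the identity matrix $I$, which is a consequence of the completeness relation $\delta(t-s)=\sum_{k\ge 0}p_k(t)p_k(s)$ for the orthonormal Legendre basis.

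Next, starting from the resolvent identity $R_\star(f)\star(1_\star-f)=1_\star$, I would invoke Lemma \ref{lemma:matrixProduct}: the $\star$-product of distributions corresponds to the product of their coefficient matrices and addition to matrix addition, so, since the Legendre expansion determines the coefficient matrix uniquely, comparing coefficient matrices on the two sides yields $(I+G)(I-F)=I$. This is exactly the passage carried out in \eqref{eq:reso:mtx:1}--\eqref{eq:reso:mtx:2}. Now using the hypothesis that $(I-F)^{-1}$ exists, I multiply $(I+G)(I-F)=I$ on the right by $(I-F)^{-1}$ to conclude $I+G=(I-F)^{-1}$. Substituting this into $R_\star(f)=\phi(t)^T(I+G)\phi(s)$ gives precisely the claimed formula.

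The point requiring care is the legitimacy of the infinite-matrix manipulations: Lemma \ref{lemma:matrixProduct} is stated under the proviso that the relevant matrix product is well-defined, so I would appeal to the fact (to be established in Section \ref{sec:completeProof}) that the product of coefficient matrices of distributions in $\mathcal{A}_\Theta$ is always well-defined, which is what makes the chain \eqref{eq:reso:mtx:1}--\eqref{eq:reso:mtx:2} and the subsequent right-multiplication by $(I-F)^{-1}$ rigorous rather than merely formal. Beyond that caveat, the argument is a direct transcription of the $\star$-algebra identity $R_\star(f)\star(1_\star-f)=1_\star$ into the matrix algebra of coefficient matrices, so I do not expect any further obstacle.
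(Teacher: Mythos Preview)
Your proof is correct and follows essentially the same route as the paper: both rely on the identity \eqref{eq:reso:mtx:1}--\eqref{eq:reso:mtx:2} to link $(I+G)$ and $(I-F)$, then use the assumed invertibility of $(I-F)$ to identify $I+G=(I-F)^{-1}$. The only cosmetic difference is direction: the paper posits $(I+G)=(I-F)^{-1}$ and verifies the resolvent identity, whereas you start from $(I+G)(I-F)=I$ and right-multiply by $(I-F)^{-1}$; these are the same argument read forwards and backwards.
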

	\begin{proof}
		Let us define $R_{\star}(f) := \phi(t)^T (I-F)^{-1} \phi(s)$, i.e., set $(I+G) = (I-F)^{-1}$. Then, by Equations~\eqref{eq:reso:mtx:1} and \eqref{eq:reso:mtx:2}, we get $R_{\star}(f)\star (1_\star - f) = 1_\star$.
	\end{proof}
	Combining Lemmas~\ref{lemma:matrixProduct} and \ref{cor:resolvent} allows us to obtain an expression for the Legendre coefficients of $\tilde{u}(t)$ in terms of coefficient matrices.
	This expression is the matrix counterpart to the expression for $\tilde{u}(t)$ in the $\star$-framework: $\tilde{u}(t) = u(t,s)\vert_{s=-1} = \Theta(t-s) \star R_\star(f)\vert_{s=-1}$, see \eqref{eq:ODE:star:sol}, and is stated in the following theorem.
	\begin{theorem}\label{theorem:coeffs_smoothu}
		Consider $f\in \mathcal{A}_{\Theta}$ and its corresponding coefficient matrix $F$.
		Let $T$ denote the coefficient matrix of $\Theta(t-s)$, $I$ the identity matrix and $\{p_k\}_{k\geq 0}$ the sequence of orthonormal Legendre polynomials.
		Assume that $(I-F)$ is invertible.
		Then, the Legendre coefficients $\{c_k\}_{k\geq 0}$ of the solution $\tilde{u}(t)$ of the ODE \eqref{eq:ODE_univar} are given by
		\begin{equation}\label{eq:sysEq}
			\begin{bmatrix}
				c_0\\
				c_1\\
				c_2\\
				\vdots
			\end{bmatrix} = T (I-F)^{-1} \begin{bmatrix}
				p_0(-1)\\
				p_1(-1)\\
				p_2(-1)\\
				\vdots
			\end{bmatrix}.
		\end{equation}
	\end{theorem}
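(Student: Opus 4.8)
The plan is to translate the $\star$-algebra identity $u(t,s)=\Theta(t-s)\star R_\star(f)(t,s)$ from \eqref{eq:ODE:star:sol} into an identity between coefficient matrices by chaining Lemmas~\ref{lemma:matrixProduct} and \ref{cor:resolvent}, and then to read off the Legendre coefficients of $\tilde u$ by restricting the resulting double series to $s=-1$. First I would record that $R_\star(f)$ is well defined: $\tilde f$ is analytic, hence bounded on $[-1,1]$, so $f(t,s)=\tilde f(t)\Theta(t-s)$ is bounded and $\sum_{k\ge 1}f^{\star k}$ converges in $\mathcal{A}_\Theta$. Under the standing assumption that $(I-F)$ is invertible, Lemma~\ref{cor:resolvent} then gives $R_\star(f)(t,s)=\phi(t)^T(I-F)^{-1}\phi(s)$, i.e.\ the coefficient matrix of $R_\star(f)$ is $(I-F)^{-1}$, with $\phi(\tau)=[p_0(\tau),p_1(\tau),\dots]^T$.

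Next I would apply the multiplicative part of Lemma~\ref{lemma:matrixProduct} to the product $\Theta(t-s)\star R_\star(f)(t,s)$. Since the coefficient matrix of the left factor $\Theta(t-s)$ is $T$ and that of the right factor $R_\star(f)$ is $(I-F)^{-1}$, the coefficient matrix of $u(t,s)$ is the product $M:=T(I-F)^{-1}$, so that
\begin{equation*}
u(t,s)=\phi(t)^T M\,\phi(s)=\sum_{k,\ell\ge 0}M_{k,\ell}\,p_k(t)\,p_\ell(s).
\end{equation*}
Here one must check the hypothesis of Lemma~\ref{lemma:matrixProduct}, namely that this infinite matrix product is meaningful; this is exactly what Section~\ref{sec:completeProof} settles for products of coefficient matrices of distributions in $\mathcal{A}_\Theta$, so I would simply invoke that result rather than re-prove it.

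Finally I would specialize to $s=-1$ and use $\tilde u(t)=u(t,-1)$. The key observation is that $\Theta(t-s)|_{s=-1}=\Theta(t+1)=1$ for every $t\in[-1,1]$, so the section $s=-1$ never crosses the discontinuity line $t=s$ (meeting it only at the single endpoint $t=-1$), and $\tilde u$ is analytic on $[-1,1]$ with a uniformly convergent Legendre series whose coefficients decay geometrically, see \eqref{eq:coeffDecayRate}. Contracting the second index of $M$ against $\phi(-1)$ then yields $\tilde u(t)=\sum_{k\ge 0}\bigl(\sum_{\ell\ge 0}M_{k,\ell}p_\ell(-1)\bigr)p_k(t)=\phi(t)^T M\phi(-1)$, so the $k$th Legendre coefficient of $\tilde u$ is $c_k=(M\phi(-1))_k$; writing this componentwise in vector form is precisely \eqref{eq:sysEq}.

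The step I expect to be the main obstacle is this last one: justifying that the double Legendre series of the distribution $u(t,s)$ may be evaluated at $s=-1$ term by term. Because of the Heaviside factor, the series converges to $u$ only in an $L^2$/distributional sense on the square and exhibits the Gibbs phenomenon along $t=s$, so a naive pointwise substitution is not automatically valid. The way around it is the remark already emphasized in the paper: on the line $s=-1$ the Heaviside factor is identically $1$, the section $\tilde u$ is analytic, and its Legendre coefficients decay geometrically, which gives uniform convergence there; combined with the well-definedness of the infinite products from Section~\ref{sec:completeProof}, this makes the term-by-term restriction rigorous. The remaining steps are routine bookkeeping with the two lemmas and the identification of $\phi(t)^T[c_0,c_1,\dots]^T$ with the Legendre expansion of $\tilde u$.
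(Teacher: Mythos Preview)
Your proposal is correct and follows exactly the route the paper indicates: the paper does not give a formal proof of this theorem but simply states that it follows by combining Lemmas~\ref{lemma:matrixProduct} and \ref{cor:resolvent} with the $\star$-framework identity $\tilde u(t)=u(t,s)|_{s=-1}=\Theta(t-s)\star R_\star(f)|_{s=-1}$, which is precisely what you spell out. Your discussion of the term-by-term evaluation at $s=-1$ and the role of Section~\ref{sec:completeProof} is more careful than the paper's own treatment, but the underlying argument is the same.
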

	Based on Theorem \ref{theorem:coeffs_smoothu}, we can formulate a matrix problem that is equivalent to the problem of solving ODE \eqref{eq:ODE_univar}.
	
	\begin{problem}[Infinite matrix problem]\label{prob:matrixProblem}
		Given a smooth function $\tilde{f}(t)$, compute the Legendre coefficients $\{c_k\}_{k=0}^\infty$ of the solution $\tilde{u}(t)$ to the ODE \eqref{eq:ODE_univar}.
		By \eqref{eq:sysEq} this corresponds to three matrix problems:
		\begin{enumerate}
			\item Construct the infinite coefficient matrix $F=\begin{bmatrix}
				f_{k,\ell}
			\end{bmatrix}_{k,\ell=0}^\infty$ of Fourier coefficients in Legendre basis $f_{k,\ell} = \int_{-1}^1\int_{-1}^1 \tilde{f}(\tau)\Theta(\tau-\rho) p_k(\tau)p_{\ell}(\rho) d\rho d\tau$.
			\item Solve the infinite linear system of equations $(I-F)x = \phi(-1)$ for $x$. The right hand side is the column vector $\phi(-1) = \begin{bmatrix}
				p_k(-1)
			\end{bmatrix}_{k=0}^{\infty}$ and $I$ is the infinite identity matrix.
			\item Compute the matrix-vector product $T x = \begin{bmatrix}
				c_0& c_1& c_2& \cdots
			\end{bmatrix}^\top$, where $T$ is the coefficient matrix of $\Theta(t-s)$.
		\end{enumerate}
	\end{problem}
	Problem \ref{prob:matrixProblem} is the main problem to solve.
	In the remainder of this paper, we develop a numerical scheme to approximate its solution and investigate the conditions under which this approximation is expected to converge.
 	
	\section{The coefficient matrix and its properties}\label{sec:coeffMatrix}
	Since the coefficient matrix is central to our analysis and to the proposed procedure, we study its structure.
	In Section \ref{sec:coeffMatrix_decay}, an analytical expression for the entries of the coefficient matrices is presented and we show that the entries decay along the diagonals.
	Section \ref{sec:coeffMatrix_band} proves that the coefficient matrices can be approximated by a banded matrix.
	These results allow us to show, in Section \ref{sec:completeProof}, that the matrix-matrix product between two coefficient matrices of distributions in $\mathcal{A}_{\Theta}$ is well-defined, see Lemma~\ref{lemma:matrixProduct}.
	
	\subsection{Formula for the Fourier coefficients}\label{sec:coeffMatrix_decay}
	The Fourier coefficients $f_{k,\ell}$ \eqref{eq:FourCoeffs} of $f(t,s)= \tilde{f}(t) \Theta(t-s)\in \mathcal{A}_{\Theta}$ are studied by relying on the Legendre expansion of the analytical function $\tilde{f}(t) = \sum_{d=0}^{\infty} \alpha_d p_d(t)$, its fast decaying coefficients $\{\alpha_d\}_{d\geq 0}$ and the coefficient matrices of $p_d(t)\Theta(t-s)\in \mathcal{A}_{\Theta}$.
	The Fourier coefficients of $p_d(t)\Theta(t-s)$ can be computed via an analytical formula, see Theorem~\ref{theorem:FormulaLegCoeffs}.
	This formula follows from combining the two known properties of Legendre polynomials below.
	\begin{property}[Integral of a Legendre polynomial on a subinterval {\cite[p.178]{Sa77}}]\label{prop:int_shift_Legendre}
		Let $p_\ell(t)$ denote the orthonormal Legendre polynomial of degree $\ell$.
		For $\ell = 0$ it holds that
		\begin{equation*}
			\int_{-1}^{\tau} p_0(\rho)d\rho = \frac{1}{\sqrt{3}} p_1(\tau) + p_0(\tau),
		\end{equation*}
		and for $\ell>0$
		\begin{equation*}
			\int_{-1}^{\tau} p_\ell(\rho)d\rho = \frac{1}{\sqrt{2\ell+1}} \left(\frac{1}{\sqrt{2\ell+3}}p_{\ell+1}(\tau) - \frac{1}{\sqrt{2\ell-1}} p_{\ell-1}(\tau)  \right).
		\end{equation*}
	\end{property}
	
	\begin{property}[Integral of the triple product of Legendre polynomials \cite{GiJeZe88}]\label{prop:intLeg}
		Let $p_\ell$ be the orthonormal Legendre polynomial of degree $\ell$.
		Consider integers $a,b,c\geq 0$ and set $s:=\frac{a+b+c}{2}$ and $\alpha:=\vert b-c\vert$.
		The integral of the product of three orthonormal Legendre polynomials is
		\begin{align*}
			\mathcal{F}_{a,b,c} &:= \int_{-1}^{1} p_a(\rho) p_b(\rho) p_c(\rho) d\rho \\
			&= 
			\begin{cases}
				0,\quad \text{if } a+b+c \text{ odd},\\
				0, \quad \text{if } s<\max(a,b,c),\\
				0, \quad \text{if } a<\vert b-c \vert,\\
				\frac{\sqrt{(2a+1)(2b+1)(2c+1)}}{\sqrt{2}(a+b+c+1)}
				\begin{psmallmatrix}
					2s-2a\\
					s-a
				\end{psmallmatrix} \begin{psmallmatrix}
					2s-2b\\
					s-b
				\end{psmallmatrix} \begin{psmallmatrix}
					2s-2c\\
					s-c
				\end{psmallmatrix} \begin{psmallmatrix}
					2s\\
					s
				\end{psmallmatrix}^{-1}, \quad \text{else}
			\end{cases}\\
			&= \begin{cases}
				0,\quad \text{if } a+b+c \text{ odd},\\
				0, \quad \text{if } b+c<a\\
				0, \quad \text{if } a<\alpha,\\
				\frac{\sqrt{(2a+1)(2b+1)(2c+1)}}{2^{(2a+1/2)}} \frac{\prod_{j=1}^{a}\frac{-a+b+c+2j}{-a+b+c+2j-1}}{(a+b+c+1)}  {\frac{\prod_{j=(\frac{a+\alpha}{2}+1)}^{a+\alpha}j^2}{\prod_{j=1}^{\frac{a-\alpha}{2}}j^2 \prod_{j=(a-\alpha+1)}^{a+\alpha} j}} ,\quad \text{else}.
			\end{cases}
		\end{align*}
	\end{property}
	
	\begin{theorem}[Coefficients of a Legendre polynomial in $\mathcal{A}_\Theta$]\label{theorem:FormulaLegCoeffs}
		Let $p_d(t),p_k(t),p_\ell(s)$ be the orthonormal Legendre polynomials of degree $d,k,\ell$, respectively, and $\mathcal{F}_{a,b,c}$ as in Property \ref{prop:intLeg}.
		Then the coefficients $b_{k,\ell}^{(d)}$ of the Legendre expansion of $p_d(t)\Theta(t-s)$ are given, for $\ell=0$, by		
		\begin{equation*}
			b^{(d)}_{k,0}= \frac{1}{\sqrt{3}}\mathcal{F}_{d,k,1}+\mathcal{F}_{d,k,0},
		\end{equation*}
		and, for $\ell>0$, by
		\begin{equation}\label{eq:LegBasisCoeffs}
			b^{(d)}_{k,\ell}=\frac{1}{\sqrt{2\ell+1}} \left(\frac{1}{\sqrt{2\ell+3}}\mathcal{F}_{d,k,\ell+1}-\frac{1}{\sqrt{2\ell-1}}\mathcal{F}_{d,k,\ell-1}\right).
		\end{equation}
	\end{theorem}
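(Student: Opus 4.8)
The plan is to evaluate the double integral defining $b_{k,\ell}^{(d)}$ directly: first integrate out the $\rho$ variable using the Heaviside factor, which reduces it to an integral over the subinterval $[-1,\tau]$, and then expand the resulting antiderivative back into the Legendre basis so that the remaining $\tau$-integral becomes a triple product of Legendre polynomials, handled by Property~\ref{prop:intLeg}.

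First I would write, from \eqref{eq:FourCoeffs} applied to $f(t,s) = p_d(t)\Theta(t-s)$,
$$b_{k,\ell}^{(d)} = \int_{-1}^1 \int_{-1}^1 p_d(\tau)\,p_k(\tau)\,p_\ell(\rho)\,\Theta(\tau-\rho)\, d\rho\, d\tau.$$
Since the integrand is a bounded measurable function on the compact square $[-1,1]\times[-1,1]$, Fubini's theorem applies and I may integrate in $\rho$ first. By the definition of the Heaviside step function, $\Theta(\tau-\rho) = 1$ exactly when $\rho \le \tau$, so the inner integral is $\int_{-1}^{\tau} p_\ell(\rho)\, d\rho$, giving
$$b_{k,\ell}^{(d)} = \int_{-1}^1 p_d(\tau)\,p_k(\tau)\left(\int_{-1}^{\tau} p_\ell(\rho)\, d\rho\right) d\tau.$$

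Next I would substitute the closed form of $\int_{-1}^{\tau} p_\ell(\rho)\, d\rho$ supplied by Property~\ref{prop:int_shift_Legendre}, treating the cases $\ell = 0$ and $\ell > 0$ separately. In the case $\ell = 0$ this antiderivative equals $\frac{1}{\sqrt{3}} p_1(\tau) + p_0(\tau)$, and in the case $\ell > 0$ it equals $\frac{1}{\sqrt{2\ell+1}}\bigl(\frac{1}{\sqrt{2\ell+3}} p_{\ell+1}(\tau) - \frac{1}{\sqrt{2\ell-1}} p_{\ell-1}(\tau)\bigr)$. Inserting either expression turns the remaining integral into a linear combination of integrals of the form $\int_{-1}^1 p_d(\tau)\,p_k(\tau)\,p_m(\tau)\, d\tau = \mathcal{F}_{d,k,m}$, with $m \in \{0,1\}$ when $\ell = 0$ and $m \in \{\ell-1,\ell+1\}$ when $\ell > 0$. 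Collecting terms by linearity of the integral then yields exactly the stated formulas for $b_{k,0}^{(d)}$ and $b_{k,\ell}^{(d)}$.

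I do not expect a genuine obstacle here: the argument is a direct computation, and the only point needing a word of justification is the interchange of the order of integration, which is immediate from boundedness on the compact domain (and from the fact that $\Theta(\tau-\rho)$ is a bounded, piecewise constant weight). The content of the theorem is carried entirely by the two cited identities — Property~\ref{prop:int_shift_Legendre} for the inner integral and Property~\ref{prop:intLeg} for the outer one; the only thing to notice is that the Heaviside factor is precisely what converts the $\rho$-integration into the subinterval integral to which Property~\ref{prop:int_shift_Legendre} applies.
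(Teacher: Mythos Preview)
Your proposal is correct and follows essentially the same approach as the paper: reduce the $\rho$-integral via the Heaviside factor to $\int_{-1}^{\tau} p_\ell(\rho)\,d\rho$, apply Property~\ref{prop:int_shift_Legendre}, and then identify the remaining $\tau$-integrals as the triple-product integrals $\mathcal{F}_{d,k,m}$ from Property~\ref{prop:intLeg}. The only addition you make is the explicit remark about Fubini's theorem, which the paper leaves implicit.
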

	\begin{proof}
		By orthonormality of the Legendre polynomials, the Fourier coefficients for $\ell>0$ are given by
		\begin{align*}
			b^{(d)}_{k,\ell} &:= \int_{-1}^1 \int_{-1}^1 p_d(\tau) \Theta(\tau-\rho) p_k(\tau) p_\ell(\rho) d\rho d\tau\\
			&= \int_{-1}^1  p_d(\tau) p_k(\tau) \left(\int_{-1}^1\Theta(\tau-\rho)  p_\ell(\rho) d\rho\right) d\tau
			= \int_{-1}^1  p_d(\tau) p_k(\tau) \underbrace{\left(\int_{-1}^\tau p_\ell(\rho) d\rho\right)}_{\textrm{Apply Property \ref{prop:int_shift_Legendre}}} d\tau \\
			&=  \frac{1}{\sqrt{2\ell+1}} \left[ \frac{1}{\sqrt{2\ell+3}}\int_{-1}^1  p_d(\tau) p_k(\tau) p_{\ell+1}(\tau) d\tau - \frac{1}{\sqrt{2\ell-1}}\int_{-1}^1  p_d(\tau) p_k(\tau)  p_{\ell-1}(\tau)  d\tau\right]\\
			&= \frac{1}{\sqrt{2\ell+1}} \left[\frac{1}{\sqrt{2\ell+3}} \mathcal{F}_{d,k,\ell+1} -\frac{1}{\sqrt{2\ell-1}} \mathcal{F}_{d,k,\ell-1} \right].
		\end{align*}
		For $\ell=0$ the same derivation holds, by using the formula for this case stated in Property \ref{prop:int_shift_Legendre}.
	\end{proof}
	Denote the coefficient matrix of $p_d(t)\Theta(t-s)$ by $B^{(d)} := \left[b_{k,\ell}^{(d)}\right]_{k,\ell=0}^{\infty}$, with $b_{k,\ell}^{(d)}$ as in Theorem \ref{theorem:FormulaLegCoeffs}.
	We will call such a matrix the \textit{Legendre basis matrix of degree $d$}.
	Along a diagonal of $B^{(d)}$ the entries decay linearly, this is formally stated in Lemma \ref{lemma:LegCoeffsDecay}.
	
	\begin{lemma}[Decay of Legendre basis coefficients]\label{lemma:LegCoeffsDecay}
		For $b_{k,l}^{(d)}$, as in Theorem~\ref{theorem:FormulaLegCoeffs}, it holds that
		\begin{equation*}
			\lim_{\substack{k,\ell\rightarrow\infty\\ \vert k-\ell\vert \text{ constant}}} \vert b^{(d)}_{k,\ell} \vert \sim \mathcal{O}(1/\ell).
		\end{equation*}
	\end{lemma}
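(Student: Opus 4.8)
The plan is to reduce the claim to a single uniform bound on the triple-product integrals $\mathcal{F}_{d,k,c}$ of Property~\ref{prop:intLeg}, and then read off the decay directly from the closed form of Theorem~\ref{theorem:FormulaLegCoeffs}.

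First I would invoke Theorem~\ref{theorem:FormulaLegCoeffs}: for $\ell>0$,
\[
	b^{(d)}_{k,\ell}=\frac{1}{\sqrt{2\ell+1}}\left(\frac{\mathcal{F}_{d,k,\ell+1}}{\sqrt{2\ell+3}}-\frac{\mathcal{F}_{d,k,\ell-1}}{\sqrt{2\ell-1}}\right),
\]
so that, since $1/\sqrt{2\ell+3}\le 1/\sqrt{2\ell-1}$,
\[
	\left| b^{(d)}_{k,\ell}\right| \le \frac{\left|\mathcal{F}_{d,k,\ell+1}\right|+\left|\mathcal{F}_{d,k,\ell-1}\right|}{\sqrt{(2\ell+1)(2\ell-1)}}=\frac{\left|\mathcal{F}_{d,k,\ell+1}\right|+\left|\mathcal{F}_{d,k,\ell-1}\right|}{\sqrt{4\ell^{2}-1}}.
\]
Hence the lemma follows once I show $\left|\mathcal{F}_{d,k,c}\right|\le C_d$ for a constant $C_d$ depending only on $d$, uniformly in $k,c\ge 0$. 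I would also record that, by the selection rules in Property~\ref{prop:intLeg}, $\mathcal{F}_{d,k,c}=0$ unless $|k-c|\le d$, so the nonzero entries of $B^{(d)}$ sit on the band $|k-\ell|\le d+1$ and outside it the estimate is trivial; this is consistent with letting $k,\ell\to\infty$ with $|k-\ell|$ held constant.

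Next I would bound $\mathcal{F}_{d,k,c}$ in the nontrivial (``else'') branch of Property~\ref{prop:intLeg}, setting $s=(d+k+c)/2$, by controlling separately the three groups of factors that depend on $k,c$. (i) The prefactor obeys
\[
	\frac{\sqrt{(2d+1)(2k+1)(2c+1)}}{\sqrt{2}\,(d+k+c+1)}\le\sqrt{\frac{2d+1}{2}},
\]
because $(2k+1)(2c+1)=(k+c+1)^{2}-(k-c)^{2}\le(k+c+1)^{2}\le(d+k+c+1)^{2}$. (ii) Since $|k-c|\le d$, the nonnegative integers $s-k=\tfrac{d-(k-c)}{2}$ and $s-c=\tfrac{d+(k-c)}{2}$ are at most $d$ and sum to $d$, so $\binom{2(s-k)}{s-k}\binom{2(s-c)}{s-c}$ is a single nonnegative term of the Vandermonde identity $\sum_j\binom{2(s-k)}{j}\binom{2(s-c)}{d-j}=\binom{2d}{d}$ and is therefore $\le\binom{2d}{d}$ (the cruder bound $\binom{2d}{d}^{2}$, from the monotonicity of $m\mapsto\binom{2m}{m}$, would also do). (iii) From $s\ge\max(d,k,c)$ we get $0\le s-d\le s$, and the same monotonicity gives $\binom{2(s-d)}{s-d}\big/\binom{2s}{s}\le 1$. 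Multiplying the three bounds gives $\left|\mathcal{F}_{d,k,c}\right|\le\sqrt{\tfrac{2d+1}{2}}\binom{2d}{d}=:C_d$, independent of $k,c$.

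Combining the two steps yields $\left| b^{(d)}_{k,\ell}\right|\le 2C_d/\sqrt{4\ell^{2}-1}=\mathcal{O}(1/\ell)$ as $k,\ell\to\infty$ with $|k-\ell|$ constant, which is the assertion. The only point needing an observation rather than routine bookkeeping is step~(i): once one notices the algebraic identity that makes the prefactor a constant depending on $d$ alone, nothing in the formula of Property~\ref{prop:intLeg} grows with $k,c$ and the remaining estimates are mechanical. (If one wished for the sharp two-sided version $\left| b^{(d)}_{k,\ell}\right|=\Theta(1/\ell)$, the real obstacle would be a Stirling-type expansion of $\mathcal{F}_{d,k,c}$ along each fixed sub-diagonal plus a check that the two leading terms of $\frac{\mathcal{F}_{d,k,\ell+1}}{\sqrt{2\ell+3}}-\frac{\mathcal{F}_{d,k,\ell-1}}{\sqrt{2\ell-1}}$ do not cancel; on the main diagonal $k=\ell$ they in fact do cancel and the entry is only $\mathcal{O}(1/\ell^{2})$, so $\mathcal{O}(1/\ell)$ is the natural statement.)
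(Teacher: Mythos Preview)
Your proof is correct and follows essentially the same strategy as the paper: both argue that $\mathcal{F}_{d,k,c}=\mathcal{O}(1)$ along each fixed diagonal and then read off the $1/\ell$ decay from the prefactor $1/\sqrt{(2\ell+1)(2\ell\pm1)}$ in formula~\eqref{eq:LegBasisCoeffs}. The only difference is in how the boundedness of $\mathcal{F}_{d,k,c}$ is established: the paper works with the second (product) representation in Property~\ref{prop:intLeg}, observing that the last fraction depends only on $d$ and $\alpha=|k-c|$ while the remaining factors visibly balance to $\mathcal{O}(1)$, whereas you work with the first (binomial) representation and extract an explicit uniform constant $C_d=\sqrt{(2d+1)/2}\,\binom{2d}{d}$ via the algebraic identity for the prefactor, Vandermonde, and monotonicity of central binomials---slightly more work, but yielding a bound that is uniform over all diagonals rather than just along each fixed one.
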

	\begin{proof}
		In the last equality in the formula in Property \ref{prop:intLeg}, the last fraction is constant since $\alpha = \vert b-c\vert$ is constant and $a=d$ is fixed.
		Then, it is straightforward to see that there is no decay in the expression of the integral over the triple product,
		\begin{equation*}
			\lim_{\substack{k,\ell\rightarrow\infty\\ \vert k-\ell\vert \text{ constant}}} \mathcal{F}_{d,k,\ell} \sim \mathcal{O}(1).
		\end{equation*}
		Since 
		\begin{equation*}
			\lim_{\substack{k,\ell\rightarrow\infty\\ \vert k-\ell\vert \text{ constant}}} \frac{1}{\sqrt{(2\ell+1)(2\ell-1)}} \sim \mathcal{O}(1/\ell),
		\end{equation*}
		the statement follows from \eqref{eq:LegBasisCoeffs}.
	\end{proof}
	The coefficient matrix $F := \left[f_{k,\ell}\right]_{k,\ell=0}^{\infty}$ of $f(t,s) = \tilde{f}(t)\Theta(t-s)\in \mathcal{A}_\Theta$ can be written as $F = \sum_{d=0}^{\infty} \alpha_d B^{(d)}$, where $\{\alpha_d\}_{d\geq 0}$ are the Legendre coefficients of $\tilde{f}(t)$.
	Thus, we can relate properties of $B^{(d)}$ to properties of $F$.
	Namely, the fact that along a diagonal of $F$ the entries decay linearly follows from Lemma \ref{lemma:LegCoeffsDecay} and is illustrated in Example \ref{example:decay}.
	\begin{corollary}[Decay of expansion coefficients]\label{cor:expCoeffsDecay}
		Let $\alpha=\vert k-\ell\vert$ be constant as $k,\ell$ go to infinity. 
		Then the coefficients $f_{k,\ell}$ of the Legendre expansion of $f(t,s)\in \mathcal{A}_{\Theta}$ decay asymptotically at the rate $\frac{1}{\ell}$.
	\end{corollary}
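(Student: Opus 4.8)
The plan is to push the diagonal decay of $b^{(d)}_{k,\ell}$ from Lemma~\ref{lemma:LegCoeffsDecay} through the representation $F = \sum_{d=0}^\infty \alpha_d B^{(d)}$, which entrywise reads $f_{k,\ell} = \sum_{d=0}^\infty \alpha_d\, b^{(d)}_{k,\ell}$. A first, reassuring remark is that for fixed $k,\ell$ this is really a finite sum: by the vanishing conditions in Property~\ref{prop:intLeg}, $\mathcal{F}_{d,k,\ell\pm1}=0$ unless $|k-(\ell\pm1)|\le d\le k+\ell\pm1$, so only finitely many Legendre basis matrices $B^{(d)}$ contribute to a given entry. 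Hence no convergence question arises at this stage; the real task is to control the size of the (increasingly many) surviving terms uniformly along a diagonal $|k-\ell|=\alpha$ as $k,\ell\to\infty$.

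The tool for this is a crude but $d$-explicit bound on the triple-product integral. Using $\vert p_d(\rho)\vert \le \sqrt{(2d+1)/2}$ together with the Cauchy--Schwarz inequality and the orthonormality of $\{p_k\}_{k\ge0}$,
\[
  \vert\mathcal{F}_{d,k,\ell}\vert = \left\vert \int_{-1}^1 p_d(\rho)p_k(\rho)p_\ell(\rho)\,d\rho\right\vert \le \sqrt{\tfrac{2d+1}{2}}\,\Vert p_k\Vert_2\Vert p_\ell\Vert_2 = \sqrt{\tfrac{2d+1}{2}},
\]
and the analogous bound with $\sqrt{(2m+1)/2}$, $m=\min(d,k,\ell)\le d$, when $d$ is not the smallest of the three indices; in all cases $\vert\mathcal{F}_{d,k,\ell}\vert \le \sqrt{(2d+1)/2}$. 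Substituting this into formula~\eqref{eq:LegBasisCoeffs} of Theorem~\ref{theorem:FormulaLegCoeffs} and using $\frac{1}{\sqrt{2\ell+1}}\big(\frac{1}{\sqrt{2\ell+3}}+\frac{1}{\sqrt{2\ell-1}}\big)\le \frac{c}{\ell}$ for $\ell\ge1$, one obtains $\vert b^{(d)}_{k,\ell}\vert \le \frac{c}{\ell}\sqrt{2d+1}$ with an absolute constant $c$; the case $\ell=0$ is covered separately by the first formula of Theorem~\ref{theorem:FormulaLegCoeffs} but plays no role in the diagonal limit. The essential point is that the $d$-dependence here is only polynomial.

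It then remains to sum against the geometrically decaying Legendre coefficients of $\tilde f$. By analyticity of $\tilde f$ and~\eqref{eq:coeffDecayRate}, $\vert\alpha_d\vert \le C\rho^{-d-1}$ for some $\rho>1$, so for $\ell$ large enough
\[
  \vert f_{k,\ell}\vert \le \sum_{d=0}^\infty \vert\alpha_d\vert\,\vert b^{(d)}_{k,\ell}\vert \le \frac{c}{\ell}\sum_{d=0}^\infty \vert\alpha_d\vert\sqrt{2d+1} \le \frac{cC}{\ell}\sum_{d=0}^\infty \rho^{-d-1}\sqrt{2d+1},
\]
and the last series converges since $\rho>1$. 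This gives $\vert f_{k,\ell}\vert = \mathcal{O}(1/\ell)$ along the diagonal, which is the claimed rate; that it is not asymptotically faster follows, for generic $\tilde f$, by applying Lemma~\ref{lemma:LegCoeffsDecay} to the smallest $d$ with $\alpha_d\neq0$, whose contribution is exactly of order $1/\ell$ and is not cancelled by the geometrically smaller higher-order terms.

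The main obstacle is precisely this uniformity-in-$d$ step. Lemma~\ref{lemma:LegCoeffsDecay} yields the $1/\ell$ asymptotics only for each fixed $d$, with an implied constant that a priori grows with $d$, so the statements cannot simply be summed term by term; one genuinely needs the $d$-explicit estimate $\vert\mathcal{F}_{d,k,\ell}\vert\le\sqrt{(2d+1)/2}$ so that the geometric decay of $\alpha_d$ dominates. Verifying that this estimate is legitimate for all the index configurations that actually occur — in particular when $d$ exceeds $k$ or $\ell$, where one bounds by the smallest index and notes it is still $\le d$ — is the only slightly delicate point.
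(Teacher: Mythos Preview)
Your argument is correct and follows the same outline the paper indicates: write $f_{k,\ell}=\sum_{d\ge0}\alpha_d\,b^{(d)}_{k,\ell}$ and transfer the diagonal decay from Lemma~\ref{lemma:LegCoeffsDecay}. The paper, however, gives no details---the corollary is stated immediately after the sentence ``the fact that along a diagonal of $F$ the entries decay linearly follows from Lemma~\ref{lemma:LegCoeffsDecay}''---so your write-up is in fact more complete than the paper's own treatment. In particular, you correctly identify the only real obstacle, namely uniformity of the $1/\ell$ bound in $d$, and resolve it with the clean Cauchy--Schwarz estimate $|\mathcal{F}_{d,k,\ell}|\le\sqrt{(2d+1)/2}$ combined with the geometric decay~\eqref{eq:coeffDecayRate} of the $\alpha_d$; the paper never isolates this step. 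Your closing remark on sharpness is heuristic (cancellations between terms of different $d$ are not ruled out, and $b^{(d_0)}_{k,\ell}$ can vanish on a given diagonal by the bandedness of $B^{(d_0)}$), but the corollary and the paper's ``$\sim\mathcal{O}(1/\ell)$'' notation only claim the upper bound, which you establish rigorously.
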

	\begin{example}\label{example:decay}
		The polynomial of degree one $\tilde{f}(t) = -\imath \tau (t+1)$, with $\tau>0$, can be written as a linear combination of $p_0(t)$ and $p_1(t)$, namely $-\imath \tau (t+1) = -2\imath \tau p_0(t) - \sqrt{\frac{2}{3}} \imath \tau p_1(t)$.
		Thus, its coefficient matrix is $F = -2\imath \tau B^{(0)} - \sqrt{\frac{2}{3}} \imath \tau B^{(1)}$, which is a pentadiagonal matrix.
		The order of magnitude of the entries of $F$ for $\tau=4$ are shown in Figure \ref{fig:band_ITVOLT}.
		A linear decay is observed in this figure.
		
		\begin{figure}[!ht]
			\begin{subfigure}{.49\textwidth}
				\centering
				\setlength\figureheight{.6\textwidth}
				\setlength\figurewidth{.6\textwidth}
					\begin{tikzpicture}
	\begin{axis}[
		width=0.95\figurewidth,
		height=\figureheight,
		at={(0\figurewidth,0\figureheight)},
		xlabel = {$k$},
		ylabel = {$\ell$},
		view={90}{90},shader=interp,
		colormap={parula}{
			rgb255=(53,42,135)
			rgb255=(15,92,221)
			rgb255=(18,125,216)
			rgb255=(7,156,207)
			rgb255=(21,177,180)
			rgb255=(89,189,140)
			rgb255=(165,190,107)
			rgb255=(225,185,82)
			rgb255=(252,206,46)
			rgb255=(249,251,14)},
		colorbar,
		colorbar style={
			width=0.2cm},
		point meta min = -4,
		point meta max = 0,
		]
		\addplot3[
		contour filled={
			levels={-3,-2.2,...,0},
		},
		colormap access=piecewise const,
		patch type=bilinear
		]
		table {figs/contour_ITVOLT_m=50tend=4.dat};
	\end{axis}
	
 \end{tikzpicture}
			\end{subfigure}
			\begin{subfigure}{.49\textwidth}
				\centering
				\setlength\figureheight{2cm}
				\setlength\figurewidth{.85\textwidth}
				\input{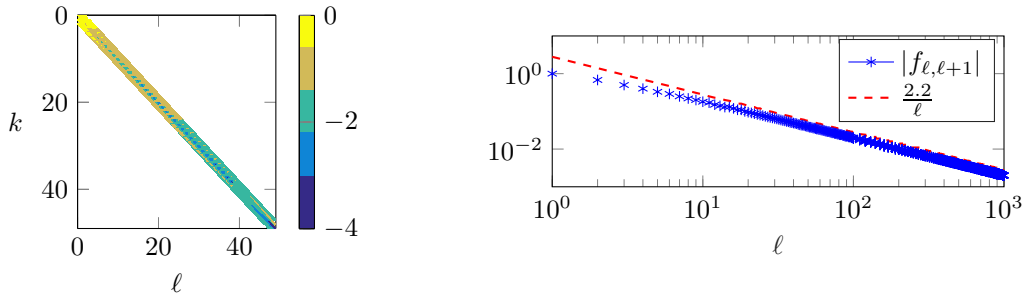}
			\end{subfigure}
			\caption{Left: the order of magnitude of the entries $f_{k,\ell}$ of $F$, the coefficient matrix of $f(t,s) = \left[-\imath 4(t+1)\right]\Theta(t-s)$. Right: the magnitude of the entries on the first superdiagonal $\vert f_{\ell,\ell+1}\vert$ together with the predicted decay rate $\mathcal{O}(\frac{1}{\ell})$.}
			\label{fig:band_ITVOLT}
		\end{figure}
	\end{example}

	\subsection{Banded coefficient matrix}\label{sec:coeffMatrix_band}
	A key property of the coefficient matrices of distributions in $\mathcal{A}_{\Theta}$ is that they are numerically banded.
	That is, they can be approximated by a banded matrix for any given threshold, e.g., machine precision.
	A matrix $A$ is said to be an $N$-banded matrix, or, to have bandwidth $N$, if $a_{k,\ell} = 0$, for $\vert k-\ell \vert>N$.
	In this convention a diagonal matrix is a $0$-banded matrix.
	The following corollary follows from Property~\ref{prop:intLeg} and Theorem \ref{theorem:FormulaLegCoeffs}.
	\begin{corollary}[Bandedness of Legendre basis matrix $B^{(d)}$]\label{cor:banded_Bd}
		Consider the coefficient matrix $B^{(d)}$ of $p_d(t)\Theta(t-s)$, where $p_d(t)$ is the orthonormal Legendre polynomial of degree $d$.
		Then $B^{(d)}$ is a $(d+1)$-banded matrix, i.e.,
		\begin{equation*}
			b_{k,\ell}^{(d)} = 0, \quad \text{for } \vert k-\ell\vert >d+1.
		\end{equation*}
	\end{corollary}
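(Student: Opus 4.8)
The plan is to read off the bandwidth directly from the closed-form expression for $b^{(d)}_{k,\ell}$ in Theorem~\ref{theorem:FormulaLegCoeffs}, combined with the vanishing (support) conditions for the triple-product integral $\mathcal{F}_{a,b,c}$ collected in Property~\ref{prop:intLeg}. The key fact I would extract from Property~\ref{prop:intLeg} is that $\mathcal{F}_{a,b,c}=0$ unless the three degrees satisfy all the triangle inequalities $\max(a,b,c)\le$ (sum of the other two); equivalently, $\mathcal{F}_{d,k,m}\neq 0$ implies both $k\le d+m$ and $m\le d+k$. Everything then reduces to checking that, for $|k-\ell|>d+1$, each $\mathcal{F}$-term occurring in the formula for $b^{(d)}_{k,\ell}$ violates one of these inequalities.

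First I would treat the generic case $\ell\ge 1$. By \eqref{eq:LegBasisCoeffs}, $b^{(d)}_{k,\ell}$ is a fixed linear combination of $\mathcal{F}_{d,k,\ell+1}$ and $\mathcal{F}_{d,k,\ell-1}$, so it suffices to show both of these integrals vanish as soon as $|k-\ell|>d+1$. For $\mathcal{F}_{d,k,\ell+1}$, non-vanishing would force $k\le d+\ell+1$ and $\ell+1\le d+k$, hence $-(d+1)\le k-\ell\le d+1$; for $\mathcal{F}_{d,k,\ell-1}$, non-vanishing would force $k\le d+\ell-1$ and $\ell-1\le d+k$, hence again $|k-\ell|\le d+1$. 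Either way the hypothesis $|k-\ell|>d+1$ is contradicted, so $b^{(d)}_{k,\ell}=0$.

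Next I would dispatch the boundary case $\ell=0$, where $|k-\ell|=k$, using the separate formula $b^{(d)}_{k,0}=\tfrac{1}{\sqrt3}\,\mathcal{F}_{d,k,1}+\mathcal{F}_{d,k,0}$ from Theorem~\ref{theorem:FormulaLegCoeffs}. A nonzero $\mathcal{F}_{d,k,1}$ needs $k\le d+1$, and a nonzero $\mathcal{F}_{d,k,0}$ needs $k\le d$ (in fact $k=d$, since $p_0$ is constant); both are impossible when $k>d+1$, so $b^{(d)}_{k,0}=0$. Combining the two cases yields $b^{(d)}_{k,\ell}=0$ whenever $|k-\ell|>d+1$, which is exactly the assertion that $B^{(d)}$ is $(d+1)$-banded.

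I do not expect a genuine obstacle here: the argument is pure bookkeeping with the triangle inequalities. The only points that require a little care are (i) selecting, among the three triangle inequalities satisfied by a non-vanishing $\mathcal{F}_{d,k,m}$, the two that actually bound $k-\ell$ from above and below, and (ii) not overlooking the $\ell=0$ case, which is governed by the alternative formula in Theorem~\ref{theorem:FormulaLegCoeffs} rather than by \eqref{eq:LegBasisCoeffs}.
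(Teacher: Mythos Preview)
Your argument is correct and is exactly the approach the paper intends: the corollary is stated as an immediate consequence of Property~\ref{prop:intLeg} and Theorem~\ref{theorem:FormulaLegCoeffs}, and you have simply spelled out the triangle-inequality bookkeeping that the paper leaves implicit. There is nothing to add.
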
	
	We would like to truncate the infinite series ${F}:= \sum_{d=0}^{\infty} \alpha_d B^{(d)}$ to a finite series ${F}^{(N)}:= \sum_{d=0}^{N} \alpha_d B^{(d)}$, which is justified if $\hat{F}^{(N)}$ is in some sense close to $F$.
	Closeness will be expressed in terms of the maximum norm $\Vert F-F^{(N)} \Vert_\infty$.
	In order to bound this quantity we state an upper bound on the maximum norm of the Legendre basis matrices $B^{(d)}$ in the following lemma.
  \begin{lemma}\label{lemma:infNormBd}
		Consider the Legendre basis matrix $B^{(d)}$, its maximum norm can be bounded by
  \begin{align*}
        \Vert B^{(d)} \Vert_\infty \leq 3d+2.
  \end{align*}
 \end{lemma}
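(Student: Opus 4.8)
The plan is to estimate the maximum norm of $B^{(d)}$, i.e. $\max_{k,\ell} |b^{(d)}_{k,\ell}|$, by bounding the individual entries through Theorem~\ref{theorem:FormulaLegCoeffs}. By that theorem, each entry $b^{(d)}_{k,\ell}$ is, up to the multiplicative factors $\tfrac{1}{\sqrt{2\ell+1}}$, $\tfrac{1}{\sqrt{2\ell\pm 1}}$, $\tfrac{1}{\sqrt{2\ell+3}}$ coming from Property~\ref{prop:int_shift_Legendre}, a combination of at most two triple-product integrals $\mathcal{F}_{d,k,\ell\pm 1}$. So the first step is to obtain a clean bound on $|\mathcal{F}_{a,b,c}|$ itself. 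The key observation is that $\mathcal{F}_{a,b,c} = \int_{-1}^1 p_a p_b p_c\,d\rho$ is the expansion coefficient of the product $p_b p_c$ (a polynomial) against $p_a$; equivalently one can use $|\mathcal{F}_{a,b,c}| \le \|p_a\|_\infty \int_{-1}^1 |p_b(\rho)p_c(\rho)|\,d\rho \le \sqrt{\tfrac{2a+1}{2}}\,\big(\int p_b^2\big)^{1/2}\big(\int p_c^2\big)^{1/2} = \sqrt{\tfrac{2a+1}{2}}$, using the pointwise bound $|p_d(t)|\le\sqrt{(2d+1)/2}$ stated in the excerpt and Cauchy--Schwarz with orthonormality. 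Thus $|\mathcal{F}_{a,b,c}|\le \sqrt{(2a+1)/2}$ for all $a,b,c$.

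Next I would specialize: in $B^{(d)}$ the role of $a$ is played by $d$, so every triple-product appearing is bounded by $\sqrt{(2d+1)/2}$. For $\ell=0$ we get
\begin{equation*}
  |b^{(d)}_{k,0}| \le \tfrac{1}{\sqrt3}\,\sqrt{\tfrac{2d+1}{2}} + \sqrt{\tfrac{2d+1}{2}} = \Big(1+\tfrac{1}{\sqrt3}\Big)\sqrt{\tfrac{2d+1}{2}},
\end{equation*}
and for $\ell>0$, using $\tfrac{1}{\sqrt{2\ell+1}}\le 1$, $\tfrac{1}{\sqrt{2\ell+3}}\le 1$, $\tfrac{1}{\sqrt{2\ell-1}}\le 1$,
\begin{equation*}
  |b^{(d)}_{k,\ell}| \le \sqrt{\tfrac{2\ell+1}{2}}^{\,-1}\Big(\sqrt{\tfrac{2d+1}{2}} + \sqrt{\tfrac{2d+1}{2}}\Big) \le 2\sqrt{\tfrac{2d+1}{2}} = \sqrt{2}\,\sqrt{2d+1}.
\end{equation*}
So $\|B^{(d)}\|_\infty \le 2\sqrt{(2d+1)/2} = \sqrt{2(2d+1)} = \sqrt{4d+2}$. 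The final step is a routine inequality check: one must confirm $\sqrt{4d+2}\le 3d+2$ for all integers $d\ge 0$. Squaring, this is $4d+2\le 9d^2+12d+4$, i.e. $0\le 9d^2+8d+2$, which holds for every $d\ge0$; at $d=0$ it reads $\sqrt2\le 2$. Hence $\|B^{(d)}\|_\infty\le 3d+2$ as claimed.

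The main obstacle — if there is one — is making sure the bound on $|\mathcal{F}_{a,b,c}|$ is applied in the right slot: the theorem's formula has $\mathcal{F}_{d,k,\ell}$, so $a=d$ throughout, which is exactly the parameter we want to keep in the bound, and the dangerous unbounded index $\ell$ only enters through $\mathcal{F}_{d,k,\ell\pm1}$ where (by the $a=d$ bound) it is harmless and through the prefactors $1/\sqrt{2\ell\pm1}$ which only help. One subtlety worth a remark is the $\ell=0$ prefactor $\tfrac{1}{\sqrt3}$ and the extra $+\mathcal{F}_{d,k,0}$ term: this case gives the coefficient $1+1/\sqrt3\approx 1.577$, which is still $\le 2$, so the uniform bound $\|B^{(d)}\|_\infty\le\sqrt{4d+2}$ (and a fortiori $3d+2$) covers it. A slightly sharper route, if one wants to avoid the pointwise Legendre bound on $p_a$, is to note $\mathcal F_{a,b,c}$ is symmetric in its arguments and use the bound with the smallest index; but that refinement is not needed here since $\sqrt{4d+2}\le 3d+2$ already holds with room to spare.
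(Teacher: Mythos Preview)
There is a genuine gap, and it comes from the norm. You read ``maximum norm'' as the entrywise maximum $\max_{k,\ell}|b^{(d)}_{k,\ell}|$, but in this paper $\|B^{(d)}\|_\infty$ means the induced matrix $\infty$-norm, i.e.\ the maximum absolute row sum $\max_{k\ge 0}\sum_{\ell\ge 0}|b^{(d)}_{k,\ell}|$; the first displayed line of the appendix proof makes this explicit. Your argument correctly bounds each entry by $\sqrt{4d+2}$ (the Cauchy--Schwarz estimate $|\mathcal F_{d,k,\ell}|\le\sqrt{(2d+1)/2}$ is clean and valid), but that is a strictly weaker statement than the one claimed.

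The entrywise bound does not promote cheaply to a row-sum bound. Each row of $B^{(d)}$ has up to $2d+3$ nonzero entries by Corollary~\ref{cor:banded_Bd}, so the crude product $(2d+3)\sqrt{4d+2}$ is $O(d^{3/2})$ and already exceeds $3d+2$ at $d=0$. A salvage is conceivable if you retain the prefactors $1/\sqrt{(2\ell+1)(2\ell\pm1)}$ instead of discarding them, since for the worst row (small $k$) one gets roughly $\sqrt{2d+1}\sum_{\ell=0}^{d+1}(2\ell+1)^{-1/2}=O(d)$; but pinning down the constant $3$ along these lines takes work you have not done. The paper's own proof goes a different way: it passes to the non-normalised triple products $\dot{\mathcal F}_{d,k,\ell}$, shows they decrease monotonically along each diagonal (Lemma~\ref{lemma:monoDecay}), bounds the anchor values $\dot{\mathcal F}_{d,k,d-k}\le 2/(2d+1)$ via a binomial-coefficient inequality (Lemma~\ref{lemma:max:F}), and then sums over the band. (Minor aside: your displayed prefactor $\sqrt{(2\ell+1)/2}^{\,-1}$ should be $1/\sqrt{2\ell+1}$ per Theorem~\ref{theorem:FormulaLegCoeffs}; this does not affect your entrywise conclusion.)
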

	The proof of this lemma is technical and lengthy and is, therefore, postponed to Appendix~\ref{app:A}.
    We remark that we have observed, by numerical computations, that the infinity norm $\Vert B^{(d)} \Vert_\infty$ can be bounded by a constant.
	So the bound is too pessimistic. However, it is sufficient to prove the following result.
	\begin{theorem}\label{theorem:bandedMatrix}
		Consider the coefficient matrix $F$ of $\tilde{f}(t)\Theta(t-s) =f(t,s)\in \mathcal{A}_{\Theta}$.
		For any given tolerance $\delta_{\textrm{tol}}>0$ the matrix $F$ can be approximated by an $(N+1)$-banded matrix $F^{(N)}$ satisfying
		\begin{equation*}
			\Vert F-F^{(N)}\Vert_{\infty} \leq \delta_{\textrm{tol}}.
		\end{equation*}
		In other words, $F$ is a numerically banded matrix.
	\end{theorem}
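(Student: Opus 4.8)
The plan is to use the truncation $F^{(N)} := \sum_{d=0}^{N}\alpha_d B^{(d)}$ already introduced above and to bound the tail $F - F^{(N)} = \sum_{d=N+1}^{\infty}\alpha_d B^{(d)}$ by combining the geometric decay of the Legendre coefficients $\{\alpha_d\}_{d\ge 0}$ of the analytic function $\tilde f$ with the growth bound on $\Vert B^{(d)}\Vert_\infty$ from Lemma~\ref{lemma:infNormBd}.

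First I would check that $F^{(N)}$ is $(N+1)$-banded. By Corollary~\ref{cor:banded_Bd}, each $B^{(d)}$ with $d\le N$ is $(d+1)$-banded, so $b^{(d)}_{k,\ell}=0$ whenever $\vert k-\ell\vert > d+1$; since $d+1\le N+1$ for every $d\le N$, a finite sum of such matrices vanishes outside the common band $\vert k-\ell\vert\le N+1$, hence $F^{(N)}$ is $(N+1)$-banded.

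Next I would estimate the tail. Since $\tilde f$ is analytic on $[-1,1]$, it admits an analytic continuation to a Bernstein ellipse $E_\rho$ for some $\rho>1$, so \eqref{eq:coeffDecayRate} gives $\vert\alpha_d\vert\le C\rho^{-d-1}$ for a constant $C>0$. Using the triangle inequality for $\Vert\cdot\Vert_\infty$ together with Lemma~\ref{lemma:infNormBd},
\begin{equation*}
\Vert F - F^{(N)}\Vert_\infty \;\le\; \sum_{d=N+1}^{\infty}\vert\alpha_d\vert\,\Vert B^{(d)}\Vert_\infty \;\le\; C\sum_{d=N+1}^{\infty}(3d+2)\,\rho^{-d-1}.
\end{equation*}
The series $\sum_{d\ge 0}(3d+2)\rho^{-d-1}$ converges (ratio test, since $\rho>1$), so its tail tends to $0$ as $N\to\infty$. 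Hence, given $\delta_{\textrm{tol}}>0$, one may choose $N$ large enough that the right-hand side is at most $\delta_{\textrm{tol}}$, and this $N$ yields the claimed $(N+1)$-banded approximation; if an explicit rule is desired, the tail is crudely bounded by a constant times $N\rho^{-N}$, so a bandwidth growing like $\log(1/\delta_{\textrm{tol}})$ suffices.

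The only point requiring a little care is the legitimacy of the termwise estimate for an infinite series in $\Vert\cdot\Vert_\infty$: because the numerical majorant $\sum_d\vert\alpha_d\vert\,\Vert B^{(d)}\Vert_\infty$ converges, the series $\sum_d\alpha_d B^{(d)}$ converges absolutely in the Banach space of bounded infinite matrices equipped with $\Vert\cdot\Vert_\infty$, so $F$ is well-defined there and the triangle inequality passes to the limit. I expect this modest bookkeeping, rather than any genuinely hard estimate, to be the main obstacle; everything else follows directly from Corollary~\ref{cor:banded_Bd}, Lemma~\ref{lemma:infNormBd}, and the analytic-decay bound \eqref{eq:coeffDecayRate}.
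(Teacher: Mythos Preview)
Your proposal is correct and follows essentially the same approach as the paper: define $F^{(N)}=\sum_{d=0}^{N}\alpha_d B^{(d)}$, then bound the tail via $\Vert F-F^{(N)}\Vert_\infty\le\sum_{d>N}\vert\alpha_d\vert\,\Vert B^{(d)}\Vert_\infty\le C\sum_{d>N}(3d+2)\rho^{-d-1}$ using \eqref{eq:coeffDecayRate} and Lemma~\ref{lemma:infNormBd}. You additionally spell out the $(N+1)$-bandedness of $F^{(N)}$ and the absolute convergence in $\Vert\cdot\Vert_\infty$, which the paper leaves implicit.
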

	\begin{proof}
		Let $\tilde{f}(t) = \sum_{d=0}^{\infty} \alpha_d p_d(t)$ be the Legendre series of the function $\tilde{f}(t)$, analytic in the Bernstein ellipse $E_\rho$, $\rho>1$.
		Its coefficient matrix is $F = \sum_{d=0}^{\infty} \alpha_d B^{(d)}$.
		Using the bound~\eqref{eq:coeffDecayRate} and Lemma~\ref{lemma:infNormBd}, we have, for $F^{(N)} := \sum_{d=0}^{N} \alpha_d B^{(d)}$, that
		\begin{equation}\label{eq:infNormF}
			\Vert F-F^{(N)}\Vert_\infty = \left\Vert \sum_{d=N+1}^{\infty} \alpha_d B^{(d)} \right\Vert_\infty \leq \sum_{d=N+1}^{\infty} \vert \alpha_d\vert \Vert B^{(d)} \Vert_\infty \leq \sum_{d=N+1}^{\infty} C \rho^{-d-1}(3d+2).
		\end{equation}
		Therefore, there exists an $N$ for which $\sum_{d=N+1}^{\infty} C \rho^{-d-1}(3d+2)\leq \delta_{\textrm{tol}}$. This proves the statement.
	\end{proof}
	In the proof above, note that the truncated series
	\begin{equation}\label{eq:bandMatrix}
		F^{(N)} = \sum_{d=0}^{N} \alpha_d B^{(d)},
	\end{equation}
	defines an $(N+1)$-banded matrix sufficiently close to $F$ for $N$ large enough.
	The numerical bandedness of $F$ and the bound in Equation \eqref{eq:infNormF} for $F^{(N)}$ are illustrated in the following example.
	\begin{example}\label{example:bandedness}
		Consider the function $\tilde{f}(t) = -\imath \omega \sin(\omega t)$, where $\omega$ controls the oscillation of the function.
		This function is not a polynomial, so we cannot expect a banded coefficient matrix, however, it is numerically banded.
		For $\omega = 1$, Figure \ref{fig:numBand_nu1} shows (left) the norm $\Vert F-F^{(N)}\Vert_\infty$ and the upper bound \eqref{eq:infNormF} for increasing $N$, and (right) the order of magnitude of the entries of $F$, i.e., $\log_{10}(\vert f_{k,\ell}\vert)$.
		We see a clear numerical band structure of the matrix $F$ and that the upper bound holds.
		With the given threshold chosen equal to machine precision $\delta_{\textrm{tol}}=\epsilon_{\textrm{mach}}$, the bandwidth is $N=14$.
		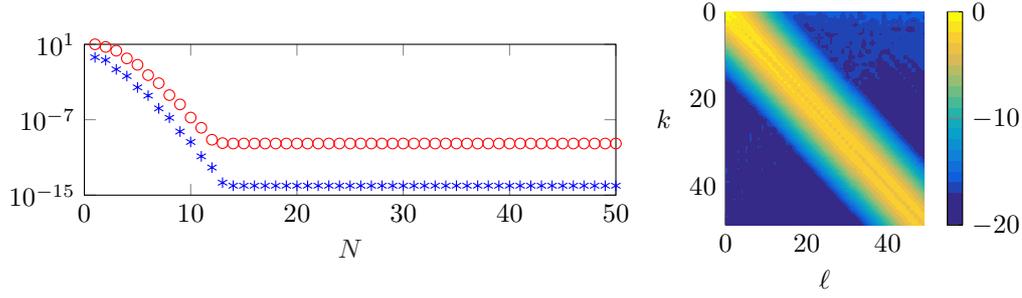
\begin{figure}[!ht]
			\centering
			\begin{subfigure}{.49\textwidth}
				\centering
				\setlength\figureheight{2cm}
				\setlength\figurewidth{\textwidth}
%
\begin{tikzpicture}

\begin{axis}[%
width=0.951\figurewidth,
height=\figureheight,
at={(0\figurewidth,0\figureheight)},
scale only axis,
xmin=0,
xmax=50,
xlabel style={font=\color{white!15!black}},
xlabel={$N$},
ymode=log,
ymin=1e-15,
ymax=10,
yminorticks=true,
axis background/.style={fill=white},
legend style={legend cell align=left, align=left, draw=white!15!black}
]
\addplot [color=blue, draw=none, mark=asterisk, mark options={solid, blue}]
  table[row sep=crcr]{%
1	0.42\\
2	0.21\\
3	0.022\\
4	0.0043\\
5	0.00027\\
6	3.6e-05\\
7	1.6e-06\\
8	1.6e-07\\
9	5.6e-09\\
10	4.4e-10\\
11	1.3e-11\\
12	8.4e-13\\
13	2.1e-14\\
14	1e-14\\
15	1e-14\\
16	1e-14\\
17	1e-14\\
18	1e-14\\
19	1e-14\\
20	1e-14\\
21	1e-14\\
22	1e-14\\
23	1e-14\\
24	1e-14\\
25	1e-14\\
26	1e-14\\
27	1e-14\\
28	1e-14\\
29	1e-14\\
30	1e-14\\
31	1e-14\\
32	1e-14\\
33	1e-14\\
34	1e-14\\
35	1e-14\\
36	1e-14\\
37	1e-14\\
38	1e-14\\
39	1e-14\\
40	1e-14\\
41	1e-14\\
42	1e-14\\
43	1e-14\\
44	1e-14\\
45	1e-14\\
46	1e-14\\
47	1e-14\\
48	1e-14\\
49	1e-14\\
50	1e-14\\
};

\addplot [color=red, draw=none, mark=o, mark options={solid, red}]
  table[row sep=crcr]{%
1	10\\
2	5.3\\
3	2.1\\
4	0.32\\
5	0.067\\
6	0.0054\\
7	0.00075\\
8	4.1e-05\\
9	4.2e-06\\
10	1.7e-07\\
11	1.4e-08\\
12	7.6e-10\\
13	3.3e-10\\
14	3e-10\\
15	3e-10\\
16	3e-10\\
17	3e-10\\
18	3e-10\\
19	3e-10\\
20	3e-10\\
21	3e-10\\
22	3e-10\\
23	3e-10\\
24	3e-10\\
25	3e-10\\
26	3e-10\\
27	3e-10\\
28	3e-10\\
29	3e-10\\
30	3e-10\\
31	3e-10\\
32	3e-10\\
33	3e-10\\
34	3e-10\\
35	3e-10\\
36	3e-10\\
37	3e-10\\
38	3e-10\\
39	3e-10\\
40	3e-10\\
41	3e-10\\
42	3e-10\\
43	3e-10\\
44	3e-10\\
45	3e-10\\
46	3e-10\\
47	3e-10\\
48	3e-10\\
49	3e-10\\
50	3e-10\\
};

\end{axis}

\begin{axis}[%
width=1.227\figurewidth,
height=1.227\figureheight,
at={(-0.16\figurewidth,-0.135\figureheight)},
scale only axis,
xmin=0,
xmax=1,
ymin=0,
ymax=1,
axis line style={draw=none},
ticks=none,
axis x line*=bottom,
axis y line*=left,
legend style={legend cell align=left, align=left, draw=white!15!black}
]
\end{axis}
\end{tikzpicture}%
			\end{subfigure}
			\begin{subfigure}{.49\textwidth}
				\centering
				\setlength\figureheight{.6\textwidth}
				\setlength\figurewidth{.6\textwidth}
					\begin{tikzpicture}
	\begin{axis}[
		width=0.95\figurewidth,
		height=\figureheight,
		at={(0\figurewidth,0\figureheight)},
		xlabel = {$k$},
		ylabel = {$\ell$},
		view={90}{90},shader=interp,
		colormap={parula}{
			rgb255=(53,42,135)
			rgb255=(15,92,221)
			rgb255=(18,125,216)
			rgb255=(7,156,207)
			rgb255=(21,177,180)
			rgb255=(89,189,140)
			rgb255=(165,190,107)
			rgb255=(225,185,82)
			rgb255=(252,206,46)
			rgb255=(249,251,14)},
		colorbar,
		colorbar style={
			width=0.2cm},
		point meta min = -20,
		point meta max = 0,
		]
		\addplot3[
		contour filled={
			levels={-17,-16,...,0},
		},
		colormap access=piecewise const,
		patch type=bilinear
		]
		table {figs/contour_DE_m=50nu=1.dat};
	\end{axis}
	
\end{tikzpicture}
			\end{subfigure}
			\caption{Coefficient matrix $F$ of $\left[-\imath \omega \sin(\omega(t+1))\right] \Theta(t-s)$ with $\omega=1$.
				Left: maximum norm $\Vert F-F^{(N)}\Vert_\infty$ (${\color{blue} \ast}$) and the upper bound $\sum_{d=N+1}^{\infty} \vert \alpha_d\vert (3d+2)$ ($\color{red} \circ$).
				Right: order of magnitude of the entries $f_{k,\ell}$.}
			\label{fig:numBand_nu1}
		\end{figure}
		
		For $\omega=5$, Figure \ref{fig:numBand_nu5} shows the same.
		We notice that $\Vert F-F^{(N)}\Vert_\infty$ reaches machine precision at $N= 24$ and again this corresponds to the numerical bandwidth of $F$.
		The bandwidth has increased compared to the less oscillatory function with $\omega=1$, since now we require more Legendre coefficients to represent the function accurately.
		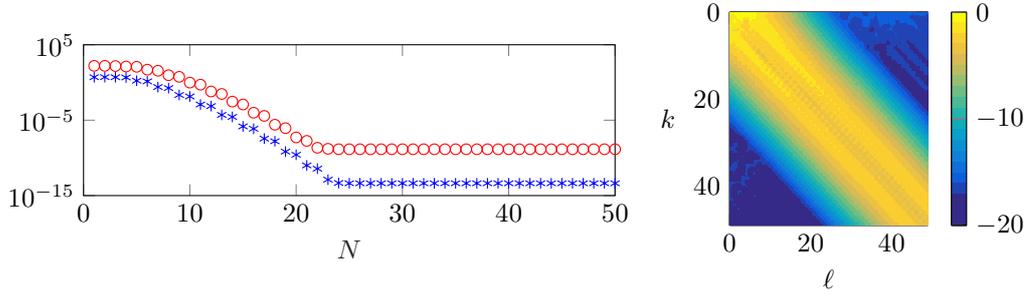
\begin{figure}[!ht]
			\centering
			\begin{subfigure}{.49\textwidth}
				\centering
				\setlength\figureheight{2cm}
				\setlength\figurewidth{\textwidth}
%
\begin{tikzpicture}

\begin{axis}[%
width=0.951\figurewidth,
height=\figureheight,
at={(0\figurewidth,0\figureheight)},
scale only axis,
xmin=0,
xmax=50,
xlabel style={font=\color{white!15!black}},
xlabel={$N$},
ymode=log,
ymin=1e-15,
ymax=1e+05,
yminorticks=true,
axis background/.style={fill=white},
legend style={legend cell align=left, align=left, draw=white!15!black}
]
\addplot [color=blue, draw=none, mark=asterisk, mark options={solid, blue}]
  table[row sep=crcr]{%
1	5\\
2	5.1\\
3	5\\
4	4.6\\
5	1.7\\
6	1.4\\
7	0.25\\
8	0.18\\
9	0.022\\
10	0.014\\
11	0.0012\\
12	0.00074\\
13	5e-05\\
14	2.7e-05\\
15	1.5e-06\\
16	7.3e-07\\
17	3.6e-08\\
18	1.6e-08\\
19	6.8e-10\\
20	2.6e-10\\
21	1e-11\\
22	3.7e-12\\
23	1.3e-13\\
24	4.7e-14\\
25	4.3e-14\\
26	4.3e-14\\
27	4.3e-14\\
28	4.3e-14\\
29	4.3e-14\\
30	4.3e-14\\
31	4.3e-14\\
32	4.3e-14\\
33	4.3e-14\\
34	4.3e-14\\
35	4.3e-14\\
36	4.3e-14\\
37	4.3e-14\\
38	4.3e-14\\
39	4.3e-14\\
40	4.3e-14\\
41	4.3e-14\\
42	4.3e-14\\
43	4.3e-14\\
44	4.3e-14\\
45	4.3e-14\\
46	4.3e-14\\
47	4.3e-14\\
48	4.3e-14\\
49	4.3e-14\\
50	4.3e-14\\
};

\addplot [color=red, draw=none, mark=o, mark options={solid, red}]
  table[row sep=crcr]{%
1	1.6e+02\\
2	1.6e+02\\
3	1.5e+02\\
4	1.3e+02\\
5	1.2e+02\\
6	50\\
7	36\\
8	9.2\\
9	5.6\\
10	0.96\\
11	0.51\\
12	0.064\\
13	0.03\\
14	0.0029\\
15	0.0012\\
16	9.8e-05\\
17	3.7e-05\\
18	2.5e-06\\
19	8.7e-07\\
20	5.2e-08\\
21	1.8e-08\\
22	2.2e-09\\
23	1.6e-09\\
24	1.4e-09\\
25	1.4e-09\\
26	1.4e-09\\
27	1.4e-09\\
28	1.4e-09\\
29	1.4e-09\\
30	1.4e-09\\
31	1.4e-09\\
32	1.4e-09\\
33	1.4e-09\\
34	1.4e-09\\
35	1.4e-09\\
36	1.4e-09\\
37	1.4e-09\\
38	1.4e-09\\
39	1.4e-09\\
40	1.4e-09\\
41	1.4e-09\\
42	1.4e-09\\
43	1.4e-09\\
44	1.4e-09\\
45	1.4e-09\\
46	1.4e-09\\
47	1.4e-09\\
48	1.4e-09\\
49	1.4e-09\\
50	1.4e-09\\
};

\end{axis}
\end{tikzpicture}%
			\end{subfigure}
			\begin{subfigure}{.49\textwidth}
				\centering
				\setlength\figureheight{.6\textwidth}
				\setlength\figurewidth{.6\textwidth}
					\begin{tikzpicture}
	\begin{axis}[
		width=0.95\figurewidth,
		height=\figureheight,
		at={(0\figurewidth,0\figureheight)},
		xlabel = {$k$},
		ylabel = {$\ell$},
		view={90}{90},shader=interp,
		colormap={parula}{
			rgb255=(53,42,135)
			rgb255=(15,92,221)
			rgb255=(18,125,216)
			rgb255=(7,156,207)
			rgb255=(21,177,180)
			rgb255=(89,189,140)
			rgb255=(165,190,107)
			rgb255=(225,185,82)
			rgb255=(252,206,46)
			rgb255=(249,251,14)},
		colorbar,
		colorbar style={
			width=0.2cm},
		point meta min = -20,
		point meta max = 0,
		]
		\addplot3[
		contour filled={
			levels={-17,-16,...,0},
		},
		colormap access=piecewise const,
		patch type=bilinear
		]
		table {figs/contour_DE_m=50nu=5.dat};
	\end{axis}
	
\end{tikzpicture}
			\end{subfigure}
			\caption{Coefficient matrix $F$ of $\left[-\imath \omega \sin(\omega(t+1))\right] \Theta(t-s)$ with $\omega=5$.
				Left: maximum norm $\Vert F-F^{(N)}\Vert_\infty$ (${\color{blue} \ast}$) and the upper bound $\sum_{d=N+1}^{\infty} \vert \alpha_d\vert (3d+2)$ ($\color{red} \circ$).
				Right: order of magnitude of the entries $f_{k,\ell}$.}
			\label{fig:numBand_nu5}
		\end{figure}
	\end{example}

	\subsection{Well-defined matrix product}\label{sec:completeProof}
	Given the distributions $f, g \in \mathcal{A}_\Theta$ and their coefficient matrices $F, G$, until now, and in particular in Lemma~\ref{lemma:matrixProduct}, we have assumed that the matrix product $FG$ is well-defined.
	Thanks to the properties above, we can show that each element of $FG$ exists, i.e., the matrix product is well-defined.
	Consider the Legendre series 
	\begin{equation*}
		\tilde{f}(t) = \sum_{d=0}^\infty \alpha_d p_d(t), \quad \tilde{g}(t) = \sum_{d=0}^\infty \beta_d p_d(t),
	\end{equation*}
	and the related matrix expansions
	\begin{equation*}
		F = \sum_{d=0}^\infty \alpha_d B^{(d)}, \quad G = \sum_{d=0}^\infty \beta_d B^{(d)}.
	\end{equation*}
	Using Corollary~\ref{cor:banded_Bd} and Lemma \ref{lemma:infNormBd}, for $k,\ell=0, 1, \dots$ we get bounds on the $(k,\ell)$-entry of these coefficient matrices
	\begin{equation*}
		|f_{k,\ell}| \leq \sum_{d=|k-\ell|-1}^\infty |\alpha_d| (3d+2) , \quad |g_{k,\ell}| \leq \sum_{d=|k-\ell|-1}^\infty |\beta_d| (3d+2),
	\end{equation*}
	(by convention, when $k=\ell$, $d$ is set to start from $0$).
	As recalled in Equation \eqref{eq:coeffDecayRate}, $\vert \alpha_d\vert \leq C_f \rho_f^{-d-1}$, with $C_f>0$, $\rho_f>1$.
	Therefore, there exists $K_f>0$ such that
	\begin{align*}
		|f_{k,\ell}| &\leq C_f \sum_{d=|k-\ell|-1}^\infty \rho_f^{-d-1} (3d+2) \leq C_f \rho_f^{-|k-\ell|} \sum_{d=0}^\infty \rho_f^{-d-2} (3d + 3|k-\ell|-1) \leq K_{f} \rho_f^{-|k-\ell|}.
	\end{align*}
	The same applies to $|g_{k,\ell}|$ for some $K_g>0$ and $\rho_g>1$. Note that this shows that $F$ is characterized by an off-diagonal exponential decay.
	As a consequence, there exist constants $\rho > 1$ and $C>0$ such that
	\begin{align*}
		\left|(FG)_{k,\ell}\right| &= \left| \sum_{j=0}^{\infty} F_{k,j} G_{j, \ell}\right| \leq \sum_{j=0}^{\infty} |F_{k,j}| |G_{j, \ell}| \leq \sum_{j=0}^\infty C \rho^{-(|j-k| + |j-\ell|)} < \infty,
	\end{align*}
	proving that, for every $k,\ell = 0, 1, \dots$, $(FG)_{k,\ell}$ is well-defined.
	Hence, Lemma~\ref{lemma:matrixProduct} holds for all the distributions in $\mathcal{A}_{\Theta}$.
	This allows us to state that there is a correspondence between the $\star$-product algebraic structure and the matrix (sub)algebra of Legendre coefficient matrices.
	This is summarized in Table \ref{table:matrixAlgebra}.
	\begin{table}[!ht]
		\centering
		\begin{tabular}{l|l}
			$\star$-framework & matrix algebra\\				\hline
			$f(t,s) \star g(t,s)$             &  $F G$ \\
			$f + g$           & $F+G$  \\
			$1_{\star} = \delta(t-s)$ &     $I$       \\
			$R_\star(f)(t,s) = 1_\star +  \sum_{k\geq 1} f^{\star k}(t,s)$   &  $(I-F)^{-1}$
		\end{tabular}
		\caption{The $\star$-operations for distributions $f,g\in \mathcal{A}_{\Theta}$ and the associated matrix algebra operations for the respective coefficient matrices $F,G$.}	
		\label{table:matrixAlgebra}
	\end{table}	

\section{Practical computation in the matrix algebra}\label{sec:practicalComp}
	The second matrix problem in Problem \ref{prob:matrixProblem} is to find the solution $x$ to the infinite system of equations
	\begin{equation*}
		(I-F)x = \phi(-1).
	\end{equation*}
	From Section \ref{sec:coeffMatrix_band} we know that we can accurately represent the infinite numerically banded coefficient matrix $F$ by an infinite banded matrix $F^{(N)}$.
	In Section \ref{sec:field_of_values}, we will discuss the existence of $(I-F^{(N)})^{-1}$.
	That is, the banded infinite system of equations
	\begin{equation*}
		(I-F^{(N)}) x = \phi(-1)
	\end{equation*}
	has a unique solution $x$.
	This solution can be arbitrarily close to the solution of the original system by choosing $N$ appropriately and, therefore, we make a slight abuse of notation by using the same variable $x$ for both these solutions in favor of an easier notation.
 
	To be able to use standard linear algebra techniques we would like to work with a finite system of equations instead of an infinite system.
	In Section \ref{sec:truncErr}, we discuss how an accurate approximation $\dot{x}$ to the first entries of $x$ can be obtained by solving the finite system
	\begin{equation*}
		(I_M - F_M^{(N)}) \dot{x} = \phi_M(-1),
	\end{equation*}
	with $\phi_M(-1)= \begin{bmatrix}
		p_0(-1) & p_1(-1) &	\dots & p_{M-1}(-1)
	\end{bmatrix}^\top$ and  $F_M^{(N)}$ the $M\times M$ leading principal submatrix of $F^{(N)}$.
	In Section \ref{sec:LegCoeffs}, we elaborate on choosing an appropriate value for $M$ and how Legendre coefficients can be obtained from $\dot{x}$.
	Section \ref{sec:fastComp} explores further improvements to compute the solution $\dot{x}$ more efficiently based on exploiting hidden matrix structures.

	\subsection{Resolvent existence and decay phenomenon}\label{sec:field_of_values}
	In this section, we deal with the existence of the resolvent $(I-F^{(N)})^{-1}$. Addressing this problem means discussing the invertibility of an operator. More precisely, consider the Hilbert space $\mathcal{H}$ with orthonormal basis $\{\dot e_0, \dot e_1, \dot e_2, \dots\}$. The coefficients $f_{k,\ell}$ of the (banded) matrix $F^{(N)}$ define the operator $\mathcal{R}: \mathcal{H} \rightarrow \mathcal{H}$ as follows
	\begin{align}\label{eq:operator}
		\mathcal{R} \, \dot e_\ell = \sum_{k=\max\{\ell - N,0\}}^{\ell+N} f_{k,\ell} \, \dot e_k.
	\end{align}
	Denoting by $\mathcal{H}_M$ the linear span of $\{\dot e_0, \dots, \dot e_{M-1} \}$ and with $\mathcal{P}_M : \mathcal{H} \rightarrow \mathcal{H}_M$ the related orthogonal projection, we can define the finite-dimensional operator $\mathcal{R}_M = \mathcal{P}_M \mathcal{R} \mathcal{P}_M$. The operator $\mathcal{R}_M$ is then represented by the matrix $F_M^{(N)}$.
	Theorem 3.1 in \cite{KulRadSar22} shows that the operator $\mathcal{R}$ is invertible under the following conditions:
	\begin{enumerate}  
		\item $F^{(N)}$ is banded;
		\item For every $M=1,2,\dots$, and for $j=1, \dots, (N+1)$, there exist positive constants $K_j, L_j$, such that 
		$$\left\| \left(I_M-F_M^{(N)}\right)^{-1} e_{M-j} \right\|_2 \leq K_j, \quad \left\| \left(I_M-\left(F_M^{(N)}\right)^{H}\right)^{-1} e_{M-j} \right\|_2 \leq L_j.$$
	\end{enumerate}
	In the following, we demonstrate that Condition~2 is satisfied under certain assumptions on the \emph{field of values} of the matrices $F_M^{(N)}$, i.e., the convex set in $\mathbb{C}$ defined as
	$$ W(F_M^{(N)}) := \{ v^H F_M^{(N)} \, v, \|v\|_2 = 1 \}. $$
	Under these assumptions, we show that the matrix $(I_{M}-F_M^{(N)})^{-1}$ is characterized by the so-called \emph{decay phenomenon} (e.g., \cite{BenCetraro16,BenUMI21,BenBoi14}), i.e., the magnitude of its elements decay exponentially as we move away from the band of $F_M^{(N)}$.
	\begin{lemma}\label{lemma:decay}
		Let $A$ be a matrix with bandwidth $N+1$ and so that $W(A)$ is contained in $D(0,r)$, a disk with radius $r<1$ centered at the origin. Then 
		$$ \left| (I-A)^{-1}_{k,\ell} \right| \leq  C \mu^{d(k,\ell)}, \quad d(k,\ell) := \frac{|k-\ell|}{(N+1)}, $$
		for every $r<\mu<1$ and $C$ a constant determined by $\mu$.
	\end{lemma}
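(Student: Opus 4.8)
The plan is to follow the classical resolvent/polynomial-approximation route for proving exponential decay of the entries of analytic functions of banded matrices, in the spirit of the arguments of Benzi and collaborators \cite{BenCetraro16,BenBoi14}, specialised to $g(\zeta)=1/(1-\zeta)$. Fix any $\mu$ with $r<\mu<1$ and let $\Gamma$ be the positively oriented circle $\{|\zeta|=\mu\}$. Since the eigenvalues of $A$ lie in $W(A)\subseteq D(0,r)$, the point $\zeta=1$ lies outside $\Gamma$ (so $I-A$ is invertible), and $g$ is analytic in a neighbourhood of $\overline{D(0,\mu)}$, the holomorphic functional calculus (Cauchy's integral formula for matrices) gives
\[
 (I-A)^{-1}=\frac{1}{2\pi\imath}\oint_{\Gamma}\frac{(\zeta I-A)^{-1}}{1-\zeta}\,d\zeta,\qquad
 p(A)=\frac{1}{2\pi\imath}\oint_{\Gamma}p(\zeta)\,(\zeta I-A)^{-1}\,d\zeta
\]
for every polynomial $p$, whence $(I-A)^{-1}-p(A)=\tfrac{1}{2\pi\imath}\oint_{\Gamma}\bigl(\tfrac{1}{1-\zeta}-p(\zeta)\bigr)(\zeta I-A)^{-1}\,d\zeta$.

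Next I would estimate this integral. On $\Gamma$ the field-of-values resolvent bound $\|(\zeta I-A)^{-1}\|_2\le 1/\operatorname{dist}(\zeta,W(A))$ yields $\|(\zeta I-A)^{-1}\|_2\le 1/(\mu-r)$. Taking $p=p_m$, the $m$-th partial sum of the geometric series $\sum_{j\ge 0}\zeta^j$, the remainder is $\tfrac{1}{1-\zeta}-p_m(\zeta)=\tfrac{\zeta^{m+1}}{1-\zeta}$, bounded on $\Gamma$ by $\mu^{m+1}/(1-\mu)$. Multiplying the three factors and the length $2\pi\mu$ of $\Gamma$ gives
\[
 \bigl\|(I-A)^{-1}-p_m(A)\bigr\|_2\le\frac{\mu^{\,m+2}}{(\mu-r)(1-\mu)}.
\]

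Then I would bring in bandedness. Because $A$ has bandwidth $N+1$, $A^j$ has bandwidth $j(N+1)$, so $p_m(A)$ has bandwidth at most $m(N+1)$, and in particular $\bigl(p_m(A)\bigr)_{k,\ell}=0$ whenever $|k-\ell|>m(N+1)$. For a given pair $(k,\ell)$ I take $m$ to be the largest integer with $m(N+1)<|k-\ell|$ (with the convention $p_{-1}\equiv 0$ when $k=\ell$); this forces $(p_m(A))_{k,\ell}=0$ and $m+1\ge |k-\ell|/(N+1)=d(k,\ell)$. Using $|B_{k,\ell}|\le\|B\|_2$ and $\mu<1$,
\[
 \bigl|(I-A)^{-1}_{k,\ell}\bigr|=\bigl|\bigl((I-A)^{-1}-p_m(A)\bigr)_{k,\ell}\bigr|\le\frac{\mu^{\,m+2}}{(\mu-r)(1-\mu)}\le\frac{\mu}{(\mu-r)(1-\mu)}\,\mu^{\,d(k,\ell)},
\]
so the statement holds with $C=\mu/\bigl((\mu-r)(1-\mu)\bigr)$, which depends only on $\mu$ (and on $r$, $N$) and blows up as $\mu\downarrow r$ or $\mu\uparrow 1$, which explains the free parameter $\mu\in(r,1)$.

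No step is analytically deep; I expect the main obstacles to be two bookkeeping points. First, one must justify the resolvent estimate used on $\Gamma$: for $\zeta\notin W(A)$ and any unit vector $v$ one has $|\langle(\zeta I-A)v,v\rangle|=|\zeta-\langle Av,v\rangle|\ge\operatorname{dist}(\zeta,W(A))$, so the smallest singular value of $\zeta I-A$ is at least $\operatorname{dist}(\zeta,W(A))$, giving the bound. Second, the choice of $m$ needs care, especially the diagonal case $k=\ell$, where $d(k,\ell)=0$ and the claim degenerates to the uniform bound $\|(I-A)^{-1}\|_2\le C$ (recovered by taking $p\equiv 0$), and one must check that the rounding implicit in "largest integer $m$ with $m(N+1)<|k-\ell|$" genuinely yields $m+1\ge d(k,\ell)$ so that $\mu^{m+1}\le\mu^{d(k,\ell)}$. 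This elementary bookkeeping, rather than any hard estimate, is where an argument of this type is most likely to slip.
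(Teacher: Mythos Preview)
Your proof is correct and is precisely the standard Cauchy-integral/polynomial-approximation argument underlying the references the paper cites (the paper itself gives no proof, simply invoking Theorem~2.3 of \cite{PozSim19} and \cite{BenBoi14}). In effect you have spelled out the details of exactly the approach the paper defers to the literature, including the explicit constant $C=\mu/\bigl((\mu-r)(1-\mu)\bigr)$.
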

	The proof follows immediately from Theorem 2.3 in \cite{PozSim19}; see also \cite{BenBoi14}.
	Assume that $W(F_M^{(N)}) \subset D(0,r)$ for a fixed $r<1$ for $M \geq M_0$, with $M_0$ large enough. Then, for every $\ell=0,1\dots, M-1$, we get
	\begin{align*}
		\left\| \left(I_M-F_M^{(N)}\right)^{-1} e_{\ell} \right\|_2^2 &=
		\sum_{k=0}^M \left| \left(I_M-F_M^{(N)}\right)^{-1}_{k,\ell} \right|^2 \\
		& \leq C^2 \sum_{k=0}^M  \mu^{2 d(k,\ell)} 
		\leq C^2 \sum_{k=0}^M  \tau^{|k-\ell|}, \quad \tau = \mu^{2/(N+1)} < 1, \\
		&\leq C^2 \sum_{k=0}^\infty  \tau^{|k-\ell|} =: K_\ell< \infty.
	\end{align*}
	Note that $K_\ell$ is independent from $M$, proving that Condition~2 above holds (a similar argument holds for the Hermitian transposed case).
	Therefore, by Theorem 3.1 in \cite{KulRadSar22}, we proved the following result.
	\begin{theorem}
		Assume that $W(F_M^{(N)}) \subset D(0,r)$, with $r<1$, for every $M>M_0$, with $M_0$ large enough. Then the operator $\mathcal{R}$ defined in \eqref{eq:operator} is invertible. 
		Moreover, consider the operator equations $\mathcal{R} x = y$ and $\mathcal{R}_M x_M = \mathcal{P}_M y$. If $y$ is in the range of $\mathcal{R}$, then
		$x_M \rightarrow x$ in the norm topology.
	\end{theorem}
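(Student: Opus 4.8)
The plan is to reduce the statement to Theorem~3.1 in \cite{KulRadSar22} by verifying its two hypotheses under the field-of-values assumption; the computation displayed just before the statement already contains the heart of the argument, and what remains is to assemble it. The first hypothesis, that $F^{(N)}$ be banded, holds by construction: $F^{(N)}=\sum_{d=0}^{N}\alpha_d B^{(d)}$ is $(N+1)$-banded by Corollary~\ref{cor:banded_Bd} (see Theorem~\ref{theorem:bandedMatrix} and \eqref{eq:bandMatrix}), and the operator $\mathcal{R}$ of \eqref{eq:operator} is exactly the operator this matrix represents, so each finite section $\mathcal{R}_M$ is represented by the principal submatrix $F_M^{(N)}$.

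The substance is in checking the second hypothesis, which asks for bounds on $\big\|(I_M-F_M^{(N)})^{-1}e_{M-j}\big\|_2$ and $\big\|(I_M-(F_M^{(N)})^{H})^{-1}e_{M-j}\big\|_2$ that are uniform in $M$; here the field-of-values assumption enters through the decay phenomenon. Fix $r<1$ with $W(F_M^{(N)})\subset D(0,r)$ for all $M>M_0$. Then $1\notin W(F_M^{(N)})$, and from $\|(I_M-F_M^{(N)})v\|_2\geq|1-v^HF_M^{(N)}v|\geq 1-r$ for unit $v$ we get that $I_M-F_M^{(N)}$ is invertible with $\|(I_M-F_M^{(N)})^{-1}\|_2\leq(1-r)^{-1}$, uniformly in $M$. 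For the entrywise bounds I would invoke Lemma~\ref{lemma:decay} with $A=F_M^{(N)}$: there exist $C>0$ and $\mu\in(r,1)$, depending on $r$ and on the bandwidth $N+1$ but \emph{not} on $M$, such that $|(I_M-F_M^{(N)})^{-1}_{k,\ell}|\leq C\mu^{|k-\ell|/(N+1)}$. Summing squares down a fixed column, with $\tau:=\mu^{2/(N+1)}<1$, reproduces exactly the chain displayed before the statement,
\begin{equation*}
\big\|(I_M-F_M^{(N)})^{-1}e_\ell\big\|_2^2 \leq C^2\sum_{k=0}^{\infty}\tau^{|k-\ell|} \;=:\; K<\infty,
\end{equation*}
a bound independent of $M$ and of $\ell$; choosing $\ell=M-j$, $j=1,\dots,N+1$, supplies the constants $K_j$. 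The Hermitian-transposed estimate is identical once one notes that $(F_M^{(N)})^{H}$ again has bandwidth $N+1$ and $W\big((F_M^{(N)})^{H}\big)=\overline{W(F_M^{(N)})}\subset D(0,r)$, the disk being invariant under conjugation; this yields the constants $L_j$.

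With both hypotheses in hand, Theorem~3.1 in \cite{KulRadSar22} gives at once that $\mathcal{R}$ is invertible and that, for $y$ in the range of $\mathcal{R}$, the finite-section solutions $x_M$ of $\mathcal{R}_M x_M=\mathcal{P}_M y$ converge to $x$ in the norm of $\mathcal{H}$. One could also finish without \cite{KulRadSar22}: the uniform bound $\|(I_M-F_M^{(N)})^{-1}\|_2\leq(1-r)^{-1}$ is precisely stability of the finite sections, and since $F^{(N)}$ is banded it is a bounded operator and $\mathcal{R}_M,\mathcal{R}_M^{H}$ converge strongly to $\mathcal{R},\mathcal{R}^{H}$, so the classical finite-section theorem delivers invertibility of the limit together with $x_M\to x$. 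The only genuinely delicate point, in either route, is uniformity in the truncation size: the constants $C,\mu$ — hence $K,K_j,L_j$ — produced by Lemma~\ref{lemma:decay} (that is, by Theorem~2.3 in \cite{PozSim19}) must depend only on $r$ and $N+1$ and not on $M$, which is exactly why the hypothesis is stated for every $M>M_0$ with a single radius $r$. Everything else is routine: the passage to the Hermitian transpose costs nothing because the bounding disk is centered at the origin, and the strong-convergence facts needed for the alternative route follow immediately from the off-diagonal exponential decay of $F^{(N)}$ established in Section~\ref{sec:completeProof}.
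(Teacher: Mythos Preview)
Your proposal is correct and follows essentially the same route as the paper: verify the two hypotheses of Theorem~3.1 in \cite{KulRadSar22} (bandedness by construction, and the uniform column bounds via Lemma~\ref{lemma:decay} and the displayed summation), then invoke that theorem. Your added detail on the Hermitian-transposed case (via $W(A^{H})=\overline{W(A)}$) makes explicit what the paper dismisses with ``a similar argument holds,'' and your alternative finite-section route is a nice bonus not present in the paper, but the main line is identical.
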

	
	As a final step, we need to determine conditions on the function $\Tilde{f}(t)$ so that the coefficient matrix $F_M^{(N)}$ of $\tilde{f}(t)\Theta(t-s)$ satisfies
	$W(F_M^{(N)}) \subset D(0,r)$ for every $M$ large enough.
	Since these matrices are usually characterized by a field of values with a disk shape, we estimate it by using the so-called
	\emph{numerical radius}, which is defined, for a given matrix $A$, as
	$$ \nu(A) := \sup\{|\lambda|, \lambda \in W(A) \}. $$
	Note that $W(A) \subseteq D(0,\nu(A))$.
	Moreover, the numerical radius can also be expressed via the formula
	\begin{equation}
		\nu(A) \leq \max_{k} \left( \sum_{\ell} \frac{|a_{k,\ell}| + |a_{\ell,k}| }{2} \right);
	\end{equation}
	see, e.g., \cite[Corollary 5.2-3]{GusRaoDug97}.
	Unfortunately, obtaining analytic bounds for the numerical radius has proved to be difficult. Therefore, for the moment, we rely on arguments based on numerical observations.
	
	First, consider $B^{(d)}$, the Legendre basis matrix of degree $d$, and the related truncated matrix $B_M^{(d)}$. For $M=2000$ and $d=0,\dots,500$, we observed numerically that
	$$ \max_{k} \left( \sum_{\ell} \frac{|(B_{2000}^{(d)})_{k,\ell}| + |(B_{2000}^{(d)})_{\ell,k}| }{2} \right) \leq 0.87. $$
	Moreover, for all the tested matrices, the maximum was obtained for $k=0$, the first row.
	As the magnitude of the elements of $B_M^{(d)}$ tends to decay along the diagonal (see Lemma \ref{lemma:LegCoeffsDecay}), we can bound the numerical radius by 
	$$\nu(B^{(d)}) \leq 0.87, \quad d=0,\dots,500.$$ 
	
	Finally, by the Legendre expansion of $\tilde{f}(t) = \sum_{d=0}^{\infty} \alpha_d p_d(t)$ we get the bound
	$$ \nu(F^{(N)}) \leq \sum_{d=0}^N |\alpha_d| \nu(B^{(d)}). $$
	For the reasons given above, we conjecture that as long as 
	\begin{equation}\label{eq:f:cond:decay}
		\sum_{d=0}^N |\alpha_d| \leq 1.1494,     
	\end{equation}
	we get the inclusion:
	\begin{equation*}
		W(F_M^{(N)}) \subseteq D(0, 0.87), \quad M=0, 1, \dots  .
	\end{equation*}
	Nevertheless, we have often observed that the solution $x_M$ can still converge even when $\nu(F_M^{(N)})>1$.
	The condition in \eqref{eq:f:cond:decay} is very restrictive, it gives the impression that the techniques presented in this paper are applicable only to slow oscillating, low amplitude functions $\tilde{f}(t)$.
	However, in numerical experiments (Section~\ref{sec:NumExp}) we observe that even when the sum of coefficients is orders of magnitude larger than $1.1494$, or $\nu(F_M^{(N)})>1$, the matrix $(I-F^{(N)}_M)$ is still invertible and its inverse still has an off-diagonal decay.
	Hence, both conditions are nondescriptive in our context, they are too pessimistic.
	More descriptive conditions might~ be obtained by looking at the pseudospectrum of $F^{(N)}$.
	Exploring this path is part of ongoing research.

	\subsection{Truncation error}\label{sec:truncErr}
	The finite banded system is obtained by taking the $M\times M$ leading principal submatrix of the matrix $(I-F^{(N)})$:
	\begin{equation}\label{eq:sys_fin}
		(I_M-F^{(N)}_M) \dot{x} = \phi_M(-1).
	\end{equation}
	We analyze the error $\vert \dot{x}-x_M\vert $, where $x_M$ denotes the vector containing the first $M$ entries of the infinite solution $x$.\\
	First, we derive an analytical expression for $x_M$ in terms of submatrices of the infinite matrix $(I-F^{(N)})$.
	Set, for a more compact notation, $A:=(I_M-F^{(N)}_M)\in\mathbb{C}^{M\times M}$, and the matrices $B,C,D$ as in Figure \ref{fig:Schur}.
	In this figure, the colored region contains generic nonzeros, white indicates zeros, and arrows are used to emphasize the size of a block or region.
	Note that $N+2$ is equal to the bandwidth of the matrix plus one.
	\begin{figure}[!ht]
		\centering
		\newcommand\M{2} 
\newcommand\xo{-0.1} 
\newcommand\yo{0.1}

\newcommand\bandsize{0.8}
\newcommand\length{4}

	\begin{tikzpicture}
		\begin{scope}[scale=0.6]
			\draw[fill=blue!23] (\length,-\bandsize-\length)--(0,-\bandsize)--(0,0)--(\bandsize,0)--(\bandsize+\length,-\length);			
			\draw[dashed] (\xo,\yo-\M)--(\xo,\yo)--(\xo+\M,\yo)--(\xo+\M,\yo-\M)--cycle;
			\node at (\xo+\M/2,\yo-\M/2) {$A$};
			\draw[dashdotted] (\xo,\yo-\bandsize-\length)--(\xo,\yo-\M);
			\draw[dashdotted](\xo+\M,\yo-\M)--(\xo+\M,\yo-\bandsize-\length);
			\node at (\xo+\M/2,\yo-\M/2-\bandsize/2-\length/2) {$C$};
			\draw[dashdotted] (\xo+\M,\yo-\M)--(\xo+\length+\bandsize,\yo-\M);
			\draw[dashdotted] (\xo+\M,\yo)--(\xo+\length+\bandsize,\yo);
			\node at (\xo+\M/2+\bandsize/2+\length/2,\yo-\M/2) {$B$};
			\node at (\xo+\M/2+\bandsize/2+\length/2,\yo-\M/2-\bandsize/2-\length/2) {$D$};
			\draw[<->,thick] (0,0.3)--(\bandsize,0.3) node[midway,above,sloped] {\small $N+2$};
			\draw[thin] (0,0)--(0,0.3);			
			\draw[thin] (\bandsize,0)--(\bandsize,0.3);
			
			\draw[<->,thick] (-0.3,\yo-\M)--(-0.3,\yo) node[midway,above,sloped] {\small $M$};
			\draw[thin] (0,\yo)--(-0.3,\yo);			
			\draw[thin] (0,\yo-\M)--(-0.3,\yo-\M);
		\end{scope}
	\end{tikzpicture}
		\caption{Block subdivision of an infinite banded matrix $(I-F^{(N)})$, the colored region shows the band of the matrix. The left upper block is $A:=(I_M-F^{(N)}_M)\in\mathbb{C}^{M\times M}$, the dimensions of the other blocks follow immediately from this. Open lines indicate that the row and/or column index goes to infinity.}
		\label{fig:Schur}			
	\end{figure}
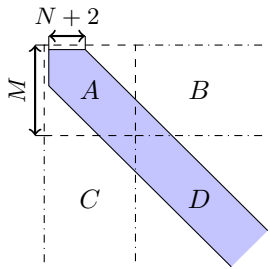
	
	This block subdivision allows us to rewrite the infinite system of equations as
	\begin{equation*}
		(I-F^{(N)}) x = \phi(-1) \Leftrightarrow \begin{bmatrix}
			A & B\\
			C & D
		\end{bmatrix} \begin{bmatrix}
			x_M\\
			z
		\end{bmatrix} = \begin{bmatrix}
			\dot{y}\\
			v
		\end{bmatrix},
	\end{equation*}
	with $x_M\in\mathbb{C}^M$, $\dot{y} = \phi_M(-1)\in\mathbb{C}^M$ and $v=\begin{bmatrix}
		p_M(-1) & p_{M+1}(-1) & \dots
	\end{bmatrix}^{\top}$.
	Assume that $A$ and $D$ are invertible, then the first $M$ entries of the solution are
	\begin{equation*}
		x_M =(I_M-A^{-1}BD^{-1}C)^{-1} A^{-1}\dot{y} - (I_M-A^{-1}BD^{-1}C)^{-1}A^{-1}B D^{-1} v.
	\end{equation*}
	The object under study is the error $\vert \dot{x}-x_M\vert$, where $\dot{x} = A^{-1}\dot{y} $, which is given by
	\begin{equation}\label{eq:error}
		\left\vert \left[(I_M-A^{-1}BD^{-1}C)^{-1} - I_M\right] \dot{x} - (I_M-A^{-1}BD^{-1}C)^{-1}A^{-1} B D^{-1} v\right\vert.
	\end{equation}
	To study this error we look at the matrix structures of the matrices appearing in Equation \eqref{eq:error}.
	Assume that $A^{-1}=(I_M-F^{(N)}_M)^{-1}$ is a numerically banded matrix, see Section \ref{sec:field_of_values}.
	In the following, matrix entries with magnitude below a given threshold are  truncated. As a consequence, the matrix $A^{-1}$ is a $K$-banded matrix.
	Since $W(D)\subseteq W(I-F^{(N)})$, if $F^{(N)}$ shows a decay then, by Lemma \ref{lemma:decay}, $D^{-1}$ also shows a decay and can be approximated accurately by an $L$-banded matrix.
	The values $K$ and $L$ can be estimated a priori by using spectral information of $F^{(N)}$ using, e.g., Lemma~\ref{lemma:decay}.
	In Figure \ref{fig:ABD}, the structure of the matrix products $A^{-1}BD^{-1}$ and $A^{-1}B D^{-1}C$ appearing in \eqref{eq:error} is derived.
	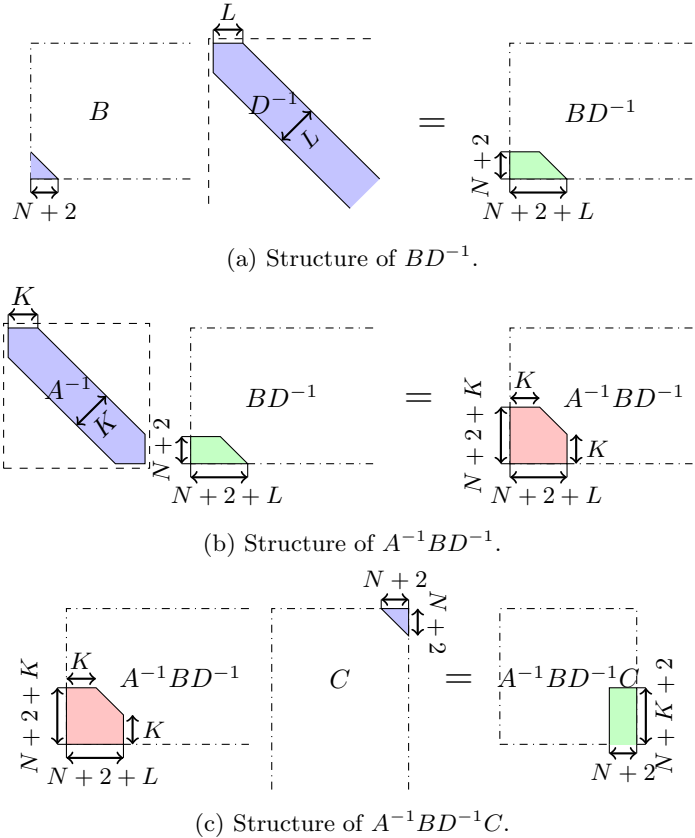
\begin{figure}[!ht]
		\begin{subfigure}{\textwidth}
			\centering
		\newcommand\M{3} 
\newcommand\xo{-0.1} 
\newcommand\yo{0.1}

\newcommand{\LL}{0.65}
\newcommand\bandsize{0.6}
\newcommand\length{3}

\newcommand{\oneindig}{4}

\begin{tikzpicture}
	\begin{scope}[scale=0.6]
		\draw[dashdotted] (-0.5,-\M)--(-\oneindig,-\M)--(-\oneindig,0)--(-0.5,0);
		\draw[fill=blue!23](-\oneindig,-\M)--(-\oneindig+\bandsize,-\M)--(-\oneindig,-\M+\bandsize);
		\node at (-0.5-\oneindig/2,-\M/2) {$B$};
		\draw[<->,thick] (-\oneindig,-\M-0.3)--(-\oneindig+\bandsize,-\M-0.3) node[midway,below,sloped] {\small $N+2$};
		\draw[thin] (-\oneindig,-\M)--(-\oneindig,-\M-0.3);			
		\draw[thin] (-\oneindig+\bandsize,-\M)--(-\oneindig+\bandsize,-\M-0.3);			
		

		\draw[fill=blue!23] (\length,-\LL-\length)--(0,-\LL)--(0,0)--(\LL,0)--(\LL+\length,-\length);	
		\draw[dashed] (\xo,\yo-\LL-\length)--(\xo,\yo)--(\xo+\LL+\length,\yo);
		\node at (\LL/2+\length/3,-\LL/2-\length/3) {$D^{-1}$};
		\draw[<->,thick] (\length/2,-\LL-\length/2)--(\LL+\length/2,-\length/2)node[midway,below,sloped] {\small $L$};
		\draw[<->,thick] (0,0.3)--(\LL,0.3) node[midway,above,sloped] {\small $L$};
		\draw[thin] (0,0)--(0,0.3);			
		\draw[thin] (\LL,0)--(\LL,0.3);
		
		\node at (\length+1.8,-\length/1.65) {\Large$=$};

		\draw[dashdotted] (\length+3.5+\oneindig,0)--(\length+3.5,0)--(\length+3.5,-\M)--(\length+3.5+\oneindig,-\M);
		\draw[fill=green!23] (\length+3.5,-\M+\bandsize)--(\length+3.5,-\M)--(\length+3.5+\bandsize+\LL,-\M)--(\length+3.5+\LL,-\M+\bandsize)--cycle;

		\node at (\length+3.5+\oneindig/2,-\M/2) {$BD^{-1}$};
		\draw[<->,thick] (\length+3.5,-\M-0.3)--(\length+3.5+\bandsize+\LL,-\M-0.3) node[midway,below] {\small $N+2+L$};
		\draw[thin] (\length+3.5,-\M-0.3)--(\length+3.5,-\M);			
		\draw[thin] (\length+3.5+\bandsize+\LL,-\M-0.3)--(\length+3.5+\bandsize+\LL,-\M);			
		
		\draw[<->,thick] (\length+3.5-0.2,-\M)--(\length+3.5-0.2,-\M+\bandsize) node[midway,sloped,above]{\small $N+2$};
		\draw[thin] (\length+3.5-0.2,-\M)--(\length+3.5,-\M);			
		\draw[thin] (\length+3.5-0.2,-\M+\bandsize)--(\length+3.5,-\M+\bandsize);
		

	\end{scope}
\end{tikzpicture}
			\caption{Structure of $BD^{-1}$.}
		\end{subfigure}
		\begin{subfigure}{\textwidth}
			\centering
		\newcommand\M{3} 
\newcommand\xo{-0.1} 
\newcommand\yo{0.1}

\newcommand{\K}{0.65}
\newcommand\bandsize{0.6}
\newcommand\length{2}

\newcommand{\oneindig}{4}

\newcommand{\YtwoK}{\K+\bandsize} 

\begin{tikzpicture}
	\begin{scope}[scale=0.6]
		
		\draw[fill=blue!23] (\M-\K,-\M)--(0,-\K)--(0,0)--(\K,0)--(\M,-\M+\K)--(\M,-\M)--cycle;	
		\draw[dashed] (\xo,-\yo-\M)--(\xo,\yo)--(-\xo+\M,\yo)--(-\xo+\M,-\yo-\M)--cycle;
		\node at (\K/2+\M/3,-\K/2-\M/3) {$A^{-1}$};
		\draw[<->,thick] (\M/2,-\K-\M/2)--(\K+\M/2,-\M/2)node[midway,below,sloped] {$K$};
		\draw[<->,thick] (0,0.3)--(\K,0.3) node[midway,above,sloped] {$K$};
		\draw[thin] (0,0)--(0,0.3);			
		\draw[thin] (\K,0)--(\K,0.3);

		\draw[dashdotted] (\length+2+\oneindig,0)--(\length+2,0)--(\length+2,-\M)--(\length+2+\oneindig,-\M);
		\draw[fill=green!23] (\length+2,-\M+\bandsize)--(\length+2,-\M)--(\length+2+\bandsize+\K,-\M)--(\length+2+\K,-\M+\bandsize)--cycle;

		\node at (\length+2+\oneindig/2,-\M/2) {$BD^{-1}$};
		\draw[<->,thick] (\length+2,-\M-0.3)--(\length+2+\bandsize+\K,-\M-0.3) node[midway,below,xshift=0.1cm] {\small $N+2+L$};
		\draw[thin] (\length+2,-\M-0.3)--(\length+2,-\M);			
		\draw[thin] (\length+2+\bandsize+\K,-\M-0.3)--(\length+2+\bandsize+\K,-\M);			
		
		\draw[<->,thick] (\length+2-0.2,-\M)--(\length+2-0.2,-\M+\bandsize) node[midway,above,sloped]{\small$N+2$};
		\draw[thin] (\length+2-0.2,-\M)--(\length+2,-\M);			
		\draw[thin] (\length+2-0.2,-\M+\bandsize)--(\length+2,-\M+\bandsize);
		

		\node at (\length+7,-\length/1.25) {\Large$=$};

		\draw[dashdotted] (\length+9+\oneindig,0)--(\length+9,0)--(\length+9,-\M)--(\length+9+\oneindig,-\M);
		\draw[fill=red!23] (\length+9,-\M+\YtwoK)--(\length+9,-\M)--(\length+9+\bandsize+\K,-\M)--(\length+9+\bandsize+\K,-\M+\K)--(\length+9+\K,-\M+\YtwoK)--cycle;

		\node at (\length+9+\oneindig/2+0.5,-\M/2) {$A^{-1}BD^{-1}$};
		\draw[<->,thick] (\length+9,-\M-0.3)--(\length+9+\bandsize+\K,-\M-0.3) node[midway,below,xshift = 0.1cm] {\small $N+2+L$};
		\draw[thin] (\length+9,-\M-0.3)--(\length+9,-\M);			
		\draw[thin] (\length+9+\bandsize+\K,-\M-0.3)--(\length+9+\bandsize+\K,-\M);			
		
		\draw[<->,thick] (\length+9-0.2,-\M)--(\length+9-0.2,-\M+\YtwoK) node[midway,above,sloped,yshift=0.1cm]{\small$N+2+K$};
		\draw[thin] (\length+9-0.2,-\M)--(\length+9,-\M);			
		\draw[thin] (\length+9-0.2,-\M+\YtwoK)--(\length+9,-\M+\YtwoK);
		
		\draw[<->,thick] (\length+9,-\M+\YtwoK+0.2)--(\length+9+\K,-\M+\YtwoK+0.2) node[above,midway]{\small $K$};
		\draw[<->,thick] (\length+9+\bandsize+\K+0.2,-\M)--(\length+9+\bandsize+\K+0.2,-\M+\K) node[right,midway]{\small $K$};

	\end{scope}
\end{tikzpicture}
			\caption{Structure of $A^{-1}BD^{-1}$.}
		\end{subfigure}
		\begin{subfigure}{\textwidth}
			\centering
		\newcommand\M{3} 
\newcommand\xo{-0.1} 
\newcommand\yo{0.1}

\newcommand{\K}{0.65}
\newcommand\bandsize{0.6}
\newcommand\length{2}

\newcommand{\oneindig}{4}

\newcommand{\YtwoK}{\K+\bandsize} 

\begin{tikzpicture}
	\begin{scope}[scale=0.6]
		
		\draw[dashdotted] (0+\oneindig,0)--(0,0)--(0,-\M)--(0+\oneindig,-\M);
		\draw[fill=red!23] (0,-\M+\YtwoK)--(0,-\M)--(0+\bandsize+\K,-\M)--(0+\bandsize+\K,-\M+\K)--(0+\K,-\M+\YtwoK)--cycle;

		\node at (0+\oneindig/2+0.5,-\M/2) {$A^{-1}BD^{-1}$};
		\draw[<->,thick] (0,-\M-0.3)--(0+\bandsize+\K,-\M-0.3) node[midway,below,xshift = 0.1cm] {\small $N+2+L$};
		\draw[thin] (0,-\M-0.3)--(0,-\M);			
		\draw[thin] (0+\bandsize+\K,-\M-0.3)--(0+\bandsize+\K,-\M);			
		
		\draw[<->,thick] (0-0.2,-\M)--(0-0.2,-\M+\YtwoK) node[midway,above,sloped,yshift=0.1cm]{\small$N+2+K$};
		\draw[thin] (0-0.2,-\M)--(0,-\M);			
		\draw[thin] (0-0.2,-\M+\YtwoK)--(0,-\M+\YtwoK);
		
		\draw[<->,thick] (0,-\M+\YtwoK+0.2)--(0+\K,-\M+\YtwoK+0.2) node[above,midway]{\small $K$};
		\draw[<->,thick] (0+\bandsize+\K+0.2,-\M)--(0+\bandsize+\K+0.2,-\M+\K) node[right,midway]{\small $K$};

		\draw[dashdotted] (\length+2.5,-\oneindig)--(\length+2.5,0)--(\length+2.5+\M,0)--(\length+2.5+\M,-\oneindig);
		\draw[fill=blue!23] (\length+2.5+\M,0)--(\length+2.5+\M,-\bandsize)--(\length+2.5+\M-\bandsize,0)--cycle;
		
		\draw[<->,thick] (\length+2.5+\M-\bandsize,0.2)--(\length+2.5+\M,0.2) node[midway,above] {\small $N+2$};
		\draw[thin] (\length+2.5+\M-\bandsize,0.2)--(\length+2.5+\M-\bandsize,0);			
		\draw[thin] ((\length+2.5+\M,0.2)--(\length+2.5+\M,0);
		
		\draw[<->,thick] (\length+2.5+\M+0.2,0)--(\length+2.5+\M+0.2,-\bandsize) node[midway,above, sloped] {\small $N+2$};
		
		
		\node at (\length+2.5+\M/2,-\oneindig/2+0.45){$C$};

		%
		%
		%
		%

		\node at (\length+3.6+\M,-\length/1.25) {\Large$=$};

		\draw[dashdotted] (\length+4.5+\M,-\M)--(\length+4.5+\M,0)--(\length+4.5+\M+\M,0)--(\length+4.5+\M+\M,-\M)--cycle;
		\node at (\length+4.5+\M+\M/2,-\M/2) {$A^{-1}BD^{-1}C$};
		
		\draw[fill=green!23] (\length+4.5+\M+\M,-\M)--(\length+4.5+\M+\M,-\M+\bandsize+\K)--(\length+4.5+\M+\M-\bandsize,-\M+\bandsize+\K)--(\length+4.5+\M+\M-\bandsize,-\M);
		
		\draw[<->,thick] (\length+4.5+\M+\M,-\M-0.2)--(\length+4.5+\M+\M-\bandsize,-\M-0.2) node[midway,below] {\small  $N+2$};
		\draw[thin] (\length+4.5+\M+\M,-\M)--(\length+4.5+\M+\M,-\M-0.2);
		\draw[thin] (\length+4.5+\M+\M-\bandsize,-\M)--(\length+4.5+\M+\M-\bandsize,-\M-0.2);
		
		\draw[<->,thick] (\length+4.5+\M+\M+0.2,-\M)--(\length+4.5+\M+\M+0.2,-\M+\bandsize+\K) node[midway,sloped,below] {\small $N+K+2$};
		\draw[thin] (\length+4.5+\M+\M+0.2,-\M)--(\length+4.5+\M+\M,-\M);
		\draw[thin] (\length+4.5+\M+\M+0.2,-\M+\bandsize+\K)--(\length+4.5+\M+\M,-\M+\bandsize+\K);

	\end{scope}
\end{tikzpicture}
			\caption{Structure of $A^{-1}BD^{-1}C$.}
		\end{subfigure}
		\caption{Structure of the matrices appearing in the error analysis of $\dot{x}$. Colored regions indicate generic nonzeros and dashed lines indicate the boundary of the matrix, the lack of a dashed line indicates that the row and/or column index goes to infinity.}
		\label{fig:ABD}
	\end{figure}

	The structure of the error is now easily determined from the structure of these matrices. 
	Namely, plug the matrices into the formula $\vert x_M-\dot{x}\vert$ and we obtain that (at least) the first $M-N-K-2$ entries are computed accurately, see Figure~\ref{fig:vectorError}.
	\begin{figure}[!ht]
		\centering
	\newcommand\M{3} 
\newcommand\xo{-0.1} 
\newcommand\yo{0.1}

\newcommand{\K}{0.65}
\newcommand\bandsize{0.6}
\newcommand\length{2}

\newcommand{\oneindig}{4}
\newcommand{\one}{0.45} 

\newcommand{\YtwoK}{\K+\bandsize} 

\begin{tikzpicture}
	\begin{scope}[scale=0.6]
		
		\draw[dashed] (\xo,-\yo-\M)--(\xo,\yo)--(-\xo+\M,\yo)--(-\xo+\M,-\yo-\M)--cycle;
		\node at (\xo+\M/2-\bandsize/2-\K/2,-\yo-\M/2+\bandsize/2+\K/2) {$0$};
		\draw[<->,thick] (\xo,\yo+0.2)--(\xo+\M-\bandsize-\K,\yo+0.2) node[midway,above] {\small $M-N-K-2$};
		\draw[thin] (\M-\bandsize-\K,-\M-0.2-\yo)--(\M-\bandsize-\K,-\M);
		\draw[thin] (\M,-\M-0.2-\yo)--(\M,-\M);
		
		\draw (0,0-\M+\bandsize+\K)--(0+\M-\bandsize-\K,0-\M+\bandsize+\K)--(0+\M-\bandsize-\K,0);
		\draw (0+\M-\bandsize-\K,0-\M)--(0+\M-\bandsize-\K,0-\M+\bandsize+\K)--(0+\M,0-\M+\bandsize+\K);	
		\draw[fill=blue!23] (\M,-0-\M+\bandsize+\K)--(-0+\M-\bandsize-\K,-0-\M+\bandsize+\K)--(-0+\M-\bandsize-\K,-\M)--(\M,-\M)--cycle;	
		\draw[<->,thick] (\M-\bandsize-\K,-\M-0.2-\yo)--(\M,-\M-0.2-\yo) node[midway,below] {\small $N+K+2$};
		\draw[thin] (\M-\bandsize-\K,-\M-0.2-\yo)--(\M-\bandsize-\K,-\M);
		\draw[thin] (\M,-\M-0.2-\yo)--(\M,-\M);

		\draw[fill=blue!23] (\length+1.5+0.05,-\M+0.05)--(\length+1.5+0.05,-0.05)--(\length+1.5+\one-0.05,-0.05)--(\length+1.5+\one-0.05,-\M+0.05)--cycle;
		\draw[dashdotted] (\length+1.5,-\M)--(\length+1.5,0)--(\length+1.5+\one,0)--(\length+1.5+\one,-\M)--cycle;
		\node at (\length+1.5+\one/2,-\M/2) {$\dot{x}$};

		\node at (\length+2.5,-\length/1.4) {\Large$-$};

		\draw[dashdotted] (\length+3.8+\oneindig,0)--(\length+3.8,0)--(\length+3.8,-\M)--(\length+3.8+\oneindig,-\M);
		\draw[fill=blue!23] (\length+3.8,-\M+\YtwoK)--(\length+3.8,-\M)--(\length+3.8+\bandsize+\K,-\M)--(\length+3.8+\bandsize+\K,-\M+\K+\bandsize)--cycle;

		\draw[<->,thick] (\length+3.8,-\M-0.3)--(\length+3.8+\bandsize+\K,-\M-0.3) node[midway,below] {\small $N+L+2$};
		\draw[thin] (\length+3.8,-\M-0.3)--(\length+3.8,-\M);			
		\draw[thin] (\length+3.8+\bandsize+\K,-\M-0.3)--(\length+3.8+\bandsize+\K,-\M);			
		
		\draw[<->,thick] (\length+3.8-0.2,-\M)--(\length+3.8-0.2,-\M+\YtwoK) node[midway,above,sloped]{\small $N+K+2$};
		\draw[thin] (\length+3.8-0.2,-\M)--(\length+3.8,-\M);			
		\draw[thin] (\length+3.8-0.2,-\M+\YtwoK)--(\length+3.8,-\M+\YtwoK);

		\draw[fill=blue!23] (\length+5.3+\M+0.05,-\M+0.05)--(\length+5.3+\M+0.05,-0.05)--(\length+5.3+\M+\one-0.05,-0.05)--(\length+5.3+\one+\M-0.05,-\M+0.05);
		\draw[dashdotted] (\length+5.3+\M,-\M)--(\length+5.3+\M,0)--(\length+5.3+\M+\one,0)--(\length+5.3+\one+\M,-\M);
		\node at (\length+5.3+\one/2+\M,-\M/2) {$v$};

		\node at (\length+6.35+\M,-\length/1.3) {\Large$=$};

		\draw[fill=blue!23] (\length+7.0+\M+0.05,-\M+0.05)--(\length+7.0+\M+0.05,-\M+\bandsize+\K)--(\length+7.0+\M+\one-0.05,-\M+\bandsize+\K)--(\length+7.0+\one+\M-0.05,-\M+0.05)--cycle;
		\draw[dashdotted] (\length+7.0+\M,-\M)--(\length+7.0+\M,0)--(\length+7.0+\M+\one,0)--(\length+7.0+\one+\M,-\M)--cycle;
		\node at (\length+7.0+\one/2+\M,-\M/4) {$0$};
		
		\draw[<->, thick] (\length+7.0+\M+0.05+0.5,-\M+0.05)--(\length+7.0+\M+0.05+0.5,-\M+\bandsize+\K) node[midway,sloped,below]{\small $N+K+2$};	
		\draw[thin] (\length+7.0+\M+0.05+0.5,-\M+0.05)--(\length+7.0+\M+0.05+0.2,-\M+0.05);			
		\draw[thin] (\length+7.0+\M+0.05+0.5,-\M+\bandsize+\K)--(\length+7.0+\M+0.05+0.2,-\M+\bandsize+\K);
		
	\end{scope}
\end{tikzpicture}
		\caption{Structure of the truncation error $\vert x_M - \dot{x} \vert =\vert \left[(I-A^{-1}BD^{-1}C)^{-1} - I \right] \dot{x} - (I-A^{-1}B D^{-1} C)^{-1} A^{-1} B D^{-1} v \vert$.
			Colored regions indicate generic nonzeros and dashed lines indicate the boundary of the matrix, the lack of a dashed line indicates that the row and/or column index goes to infinity.}
		\label{fig:vectorError}			
	\end{figure}
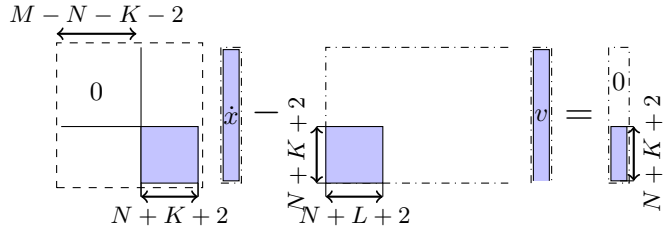		
	In practice, thanks to the decay phenomenon, more than $M-N-K-2$ entries might be computed accurately.
	This is illustrated in Example \ref{example:error}.
	\begin{example}\label{example:error}
		Consider $\tilde{f}(t) = -\imath \sin(t+1)$.
	 For $M=50$, Figure \ref{fig:matrixStructure_contours} shows the corresponding matrix $(I_M-F_M)$, its inverse and the $50\times 50$ leading principal submatrix of $D^{-1}$.
	All these matrices are numerically banded and for $\delta_{\textrm{tol}} = \epsilon_{\textrm{mach}}$ they have the bandwidth $N+2=16$, $K=22$ and $L=16$, respectively.
		\begin{figure}[!ht]
			\centering
			\setlength\figureheight{.28\textwidth}
			\setlength\figurewidth{.28\textwidth}
		\begin{tikzpicture}
	\begin{axis}[
		width=0.95\figurewidth,
		height=\figureheight,
		at={(0\figurewidth,0\figureheight)},
		xlabel = {$k$},
		ylabel = {$\ell$},
		view={90}{90},shader=interp,
		colormap={parula}{
			rgb255=(53,42,135)
			rgb255=(15,92,221)
			rgb255=(18,125,216)
			rgb255=(7,156,207)
			rgb255=(21,177,180)
			rgb255=(89,189,140)
			rgb255=(165,190,107)
			rgb255=(225,185,82)
			rgb255=(252,206,46)
			rgb255=(249,251,14)},
		]
		\addplot3[
		contour filled={
			levels={-17,-16,...,0},
		},
		colormap access=piecewise const,
		patch type=bilinear
		]
		table {figs/contour_IF_DE_m=100nu=1.dat};
	\end{axis}
	
	\begin{axis}[
		width=0.95\figurewidth,
		height=\figureheight,
		at={(0.9\figurewidth,0\figureheight)},
		xlabel = {$k$},
		ylabel = {$\ell$},
		view={90}{90},shader=interp,
		colormap={parula}{
			rgb255=(53,42,135)
			rgb255=(15,92,221)
			rgb255=(18,125,216)
			rgb255=(7,156,207)
			rgb255=(21,177,180)
			rgb255=(89,189,140)
			rgb255=(165,190,107)
			rgb255=(225,185,82)
			rgb255=(252,206,46)
			rgb255=(249,251,14)},
		]
		\addplot3[
		contour filled={
			levels={-17,-16,...,0},
		},
		colormap access=piecewise const,
		patch type=bilinear
		]
		table {figs/contour_IFinv_DE_m=100nu=1.dat};
	\end{axis}
	
	\begin{axis}[
		width=0.95\figurewidth,
		height=\figureheight,
		at={(1.8\figurewidth,0\figureheight)},
		xlabel = {$k$},
		ylabel = {$\ell$},
		view={90}{90},shader=interp,
		colormap={parula}{
			rgb255=(53,42,135)
			rgb255=(15,92,221)
			rgb255=(18,125,216)
			rgb255=(7,156,207)
			rgb255=(21,177,180)
			rgb255=(89,189,140)
			rgb255=(165,190,107)
			rgb255=(225,185,82)
			rgb255=(252,206,46)
			rgb255=(249,251,14)},
		colorbar,
		colorbar style={
			width=0.2cm, at={(2.5\figurewidth,0.623\figureheight)}, ylabel=$\log_{10}(\vert a_{k,\ell}\vert)$},
		point meta min = -20,
		point meta max = 0,
		]
		\addplot3[
		contour filled={
			levels={-17,-16,...,0},
		},
		colormap access=piecewise const,
		patch type=bilinear
		]
		table {figs/contour_Dinv_DE_m=100nu=1.dat};
	\end{axis}
	
\end{tikzpicture}			
			\caption{Order of magnitude of the entries of the matrices $(I_M-F_M)$, $(I_M-F_M)^{-1}$ and $D^{-1}$, respectively, for the function $\tilde{f}(t) = -\imath \sin(t+1)$ and $M=50$. On the far right the colorbar indicates the colors corresponding to the orders of magnitude.}	
			\label{fig:matrixStructure_contours}	
		\end{figure}
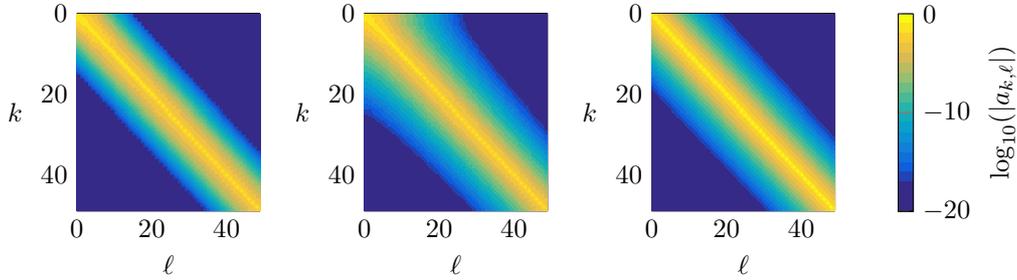
		
		From the above error analysis, we expect that the first $M-N-K-2$ entries of $\dot{x}$ are close to those of the exact (infinite) solution $x$.
		In our example, this would be $50-14-22-2 = 12$ entries.
		The predicted structure in Figure \ref{fig:vectorError} is verified by computing these matrices numerically.
		This is shown in Figure \ref{fig:truncError}, where we have chosen to set elements smaller than $10^{-19}$ to zero (i.e., the color white in the colobar).
		The observed matrices adhere to the predicted structure, but thanks to the decay of the entries it does not fill the whole predicted submatrix with large elements.
		As a consequence, more than the predicted 12 entries of $\dot{x}$ are computed accurately; we observe that 30 entries are computed up to machine precision.
		\begin{figure}[!ht]
			\centering
			\setlength\figureheight{.28\textwidth}
			\setlength\figurewidth{.28\textwidth}
			\begin{tikzpicture}
	\begin{axis}[
		width=0.95\figurewidth,
		height=\figureheight,
		at={(0\figurewidth,0\figureheight)},
		xlabel = {$k$},
		ylabel = {$\ell$},
		view={90}{90},shader=interp,
		colormap={parula}{
			rgb255=(255,255,255)
			rgb255=(53,42,135)
			rgb255=(15,92,221)
			rgb255=(18,125,216)
			rgb255=(7,156,207)
			rgb255=(21,177,180)
			rgb255=(89,189,140)
			rgb255=(165,190,107)
			rgb255=(225,185,82)
			rgb255=(252,206,46)
			rgb255=(249,251,14)},
		]
		\addplot3[
		contour filled={
			levels={-19,-18,...,0},
		},
		colormap access=piecewise const,
		patch type=bilinear
		]
		table {figs/contour_term1_DE_m=100nu=1.dat};
		\addplot [color=red, draw=red, thick,dashed,  forget plot]
		table[row sep=crcr]{%
			12 	49\\
			12	12\\
			49	12\\
		};
	\end{axis}
	
	\begin{axis}[
		width=0.95\figurewidth,
		height=\figureheight,
		at={(1\figurewidth,0\figureheight)},
		xlabel = {$k$},
		ylabel = {$\ell$},
		view={90}{90},shader=interp,
		colormap={parula}{
			rgb255=(255,255,255)
			rgb255=(53,42,135)
			rgb255=(15,92,221)
			rgb255=(18,125,216)
			rgb255=(7,156,207)
			rgb255=(21,177,180)
			rgb255=(89,189,140)
			rgb255=(165,190,107)
			rgb255=(225,185,82)
			rgb255=(252,206,46)
			rgb255=(249,251,14)},
		]
		\addplot3[
		contour filled={
			levels={-19,-18,...,0},
		},
		colormap access=piecewise const,
		patch type=bilinear
		]
		table {figs/contour_term2_DE_m=100nu=1.dat};
		\addplot [color=red, draw=red, thick,dashed,  forget plot]
		table[row sep=crcr]{%
			49	32\\
			12	32\\
			12 0\\
		};
	\end{axis}
	
	\begin{axis}[
		width=50pt,
		height=\figureheight,
		at={(2\figurewidth,0\figureheight)},
		xlabel = {$k$},
		xtick = {0,12,20,30,40},
		xticklabels = {0,{\footnotesize \color{red} 12},20,{\footnotesize 30},40},		
		yticklabels={},
		view={90}{90},shader=interp,
		colormap={parula}{
			rgb255=(255,255,255)
			rgb255=(53,42,135)
			rgb255=(15,92,221)
			rgb255=(18,125,216)
			rgb255=(7,156,207)
			rgb255=(21,177,180)
			rgb255=(89,189,140)
			rgb255=(165,190,107)
			rgb255=(225,185,82)
			rgb255=(252,206,46)
			rgb255=(249,251,14)},
		]
		\addplot3[
		contour filled={
			levels={-19,-18,...,0},
		},
		colormap access=piecewise const,
		patch type=bilinear
		]
		table {figs/contour_vec_DE_m=100nu=1.dat};
			\addplot [color=red, draw=red, thick,  forget plot]
		table[row sep=crcr]{%
			12 	1\\
			12	2\\
		};
	\end{axis}

	\begin{axis}[
	width=50pt,
	height=\figureheight,
	at={(2.3\figurewidth,0\figureheight)},
	xtick = {0,10,20},
	xticklabels = {0,-10,-20},		
	yticklabels={},
	xlabel={$\log_{10}(\vert a_{k,\ell}\vert)$},
	x label style={rotate=90},
	xticklabel pos=right,
	view={90}{90},shader=interp,
	colormap={parula}{
		rgb255=(255,255,255)
		rgb255=(53,42,135)
		rgb255=(15,92,221)
		rgb255=(18,125,216)
		rgb255=(7,156,207)
		rgb255=(21,177,180)
		rgb255=(89,189,140)
		rgb255=(165,190,107)
		rgb255=(225,185,82)
		rgb255=(252,206,46)
		rgb255=(249,251,14)},
	]
	\addplot3[
	contour filled={
		levels={-20,-19,...,0},
	},
	colormap access=piecewise const,
	patch type=bilinear
	]
	table {figs/colorbar.dat};
\end{axis}
\end{tikzpicture}
			\caption{Order of magnitude of the entries of the matrices $(I-A^{-1}BD^{-1}C)^{-1} -I$ and $(I-A^{-1}BD^{-1})^{-1} A^{-1}B D^{-1}$ and of the error vector $\vert x_M-\dot{x}\vert$ for the function $\tilde{f}(t) = -\imath \sin(t+1)$, from left to right, respectively. On the far right the colorbar indicates the colors corresponding to the orders of magnitude.
				Dashed red lines indicate the predicted structure as shown in Figure \ref{fig:vectorError}.}	
			\label{fig:truncError}	
		\end{figure}
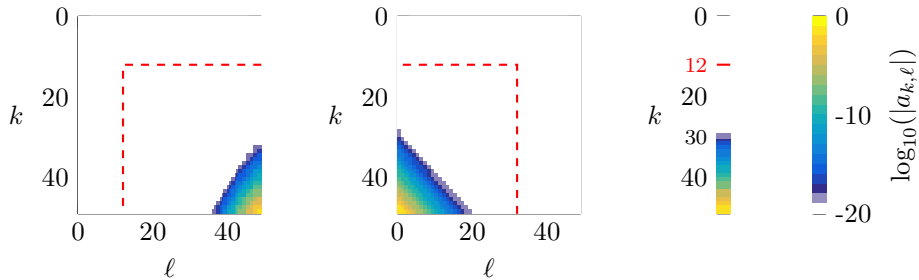		
	\end{example}

	The above example numerically validates the error analysis in this section.
	Thus, the truncation to the leading principal submatrix is both theoretically and numerically justified.
	We would like to stress that the inverse of $A=(I_M-F_M)$ is not computed explicitly in our proposed procedure, instead the system of equations $(I_M-F_M^{(N)})\dot{x} = \phi_M(-1)$ is solved to obtain $\dot{x}$.
	Once $\dot{x}$ is available, the approximate Legendre coefficients are obtained as $\dot{c} = T_M \dot{x}$.
	Since $T_M$ is a tridiagonal matrix, it follows immediately that at least $M-N-K-3$ coefficients of $\dot{c}$ are computed accurately.

	\subsection{Finding the accurate Legendre coefficients}\label{sec:LegCoeffs}
	From the analysis in the above section two questions arise
	\begin{enumerate}
		\item What is an optimal choice for $M$, large enough to accurately compute a sufficient amount of Legendre coefficients and as small as possible, to reduce computational cost?
		\item After computing the Legendre coefficients $\dot{c}$, how many should we keep?
	\end{enumerate}

	The first question is the most challenging and might not have a precise answer.
	We sketch two possible strategies.
	The first strategy can be based on Lemma~\ref{lemma:decay}, which makes it possible to obtain an estimate for the bandwidth $K$ and thus to obtain an estimate for $M-N-K-3$, i.e., the number of accurately computed Legendre coefficients for a system of size $M$.
	For a good estimation of $K$ we require spectral information of $F^{(N)}$, but the bound from Section \ref{sec:field_of_values} is too pessimistic for practical use.
	Moreover, it is difficult to predict how many Legendre coefficients would suffice to accurately represent the unknown solution $\tilde{u}(t)$.\\
	The second strategy follows the strategy employed in chebfun \cite{AuTr17} and it exploits the bound in Equation~\eqref{eq:boundLeg}.
	The idea behind this approach is to first compute the Legendre coefficients for some chosen $M$, and check whether the Legendre coefficients have reached a given tolerance, if not, try $2M$ and repeat until the tolerance is reached.
	The reason such an approach works well for chebfun is because computing the Chebyshev coefficients can be done at a computational complexity of $\mathcal{O}(N\log N)$.
	Thus, in order to make such a method feasible for our procedure we must have an efficient algorithm to solve Problem~\ref{prob:matrixProblem}.
	The next section proposes some directions which can lead to an efficient procedure to compute the Legendre coefficients. \\
	In the sequel we assume that $M$ is chosen appropriately.
	
	The second question is answered by a particular truncation combined with a numerical method that chooses the right amount of coefficients automatically.
	We describe the truncation using an example.
	Consider the function $\tilde{f}(t)= -\imath \omega \sin(\omega(t+1))$, for which we approximate the solution to the ODE \eqref{eq:ODE_univar}, that is $\tilde{u}(t)=\exp\left(-\imath(1-\cos(\omega t + \omega))\right)$.
	For $\omega=1$ this function is the one considered in Example~\ref{example:error}, from which we know that for $M=50$ we compute about $30$ Legendre coefficients up to machine precision, which suffices to represent $\tilde{u}(t)$.
	In Figure \ref{fig:coeffs_DE}, we show the Legendre coefficients obtained in the following two ways:
	\begin{enumerate}
		\item Solve $(I-F_M^{(N)}) \dot{x}=\phi_M(-1)$ and compute $\dot{c} = T_M \dot{x}$.
		\item Solve $(I-\underline{F}_M^{(N)})\underline{\dot{x}}=\phi_M(-1)$ and compute $\underline{\dot{c}} = \underline{T}_M \underline{\dot{x}}$, where the entries of the coefficient matrices are set to zero in the last $K$ rows, where $K$ is the bandwidth of the matrix.
		This means that the first $M-N-1$ rows of $\underline{F}_M^{(N)}$ equal those of $F_M^{(N)}$ and the last $N+1$ rows are all zeros.
		Since $T_M$ has bandwidth equal to one, omitting its last row gives us $\underline{T}_M$.
	\end{enumerate}
	For $\omega=1$ and $M=50$, the Legendre coefficients $\dot{c}$ without truncation lead to the first 30 coefficients being accurately computed and after this the coefficients increase in amplitude, resulting in a \textit{u}-shaped curve for the coefficients.
	This \textit{u}-shape does not correspond to the actual Legendre coefficients and it makes it more difficult to determine how many coefficients one would keep to obtain an accurate approximation to $\tilde{u}(t)$.
	Because, using more than 30 coefficients will cause the approximation to deteriorate.
	
	The Legendre coefficients $\underline{\dot{c}}$ with truncation of the coefficient matrices is equally accurate as $\dot{c}$ for the first $30$ coefficients and does not have an increase after this, the truncation in fact pushes these last coefficients to zero.
	Thus $\underline{\dot{c}}$ is easier to use, since it has a simpler shape allowing chopping of the series by, e.g., the procedure in \cite{AuTr17}, moreover, using more coefficients than 30 will not deteriorate the approximation to $\tilde{u}(t)$.
	
	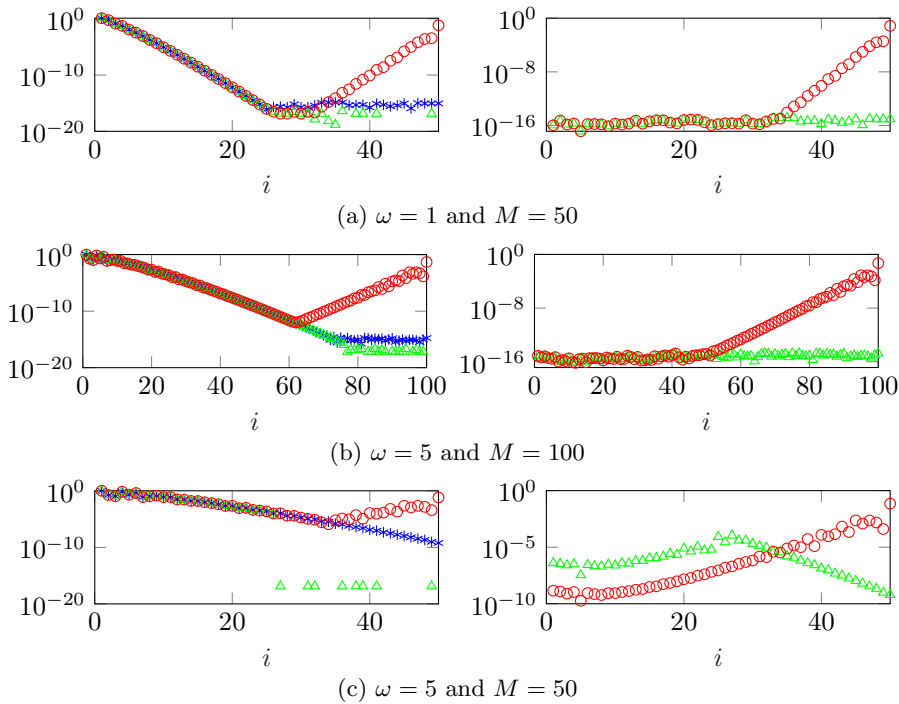
\begin{figure}[!ht]
		\begin{subfigure}{\textwidth}
			\centering
			\setlength\figureheight{1.5cm}
			\setlength\figurewidth{11cm}
%
\begin{tikzpicture}

\begin{axis}[%
width=0.411\figurewidth,
height=\figureheight,
at={(0\figurewidth,0\figureheight)},
scale only axis,
xmin=0,
xmax=50,
xlabel style={font=\color{white!15!black}},
xlabel={$i$},
ymode=log,
ymin=1e-20,
ymax=1.2779,
yminorticks=true,
axis background/.style={fill=white}
]
\addplot [color=blue, draw=none, mark=asterisk, mark options={solid, blue}, forget plot]
  table[row sep=crcr]{%
1	1.2779\\
2	0.58622\\
3	0.14196\\
4	0.05502\\
5	0.012049\\
6	0.0035012\\
7	0.00084205\\
8	0.00017829\\
9	4.496e-05\\
10	8.8653e-06\\
11	1.9715e-06\\
12	4.0179e-07\\
13	7.9209e-08\\
14	1.6153e-08\\
15	3.0095e-09\\
16	5.8979e-10\\
17	1.0825e-10\\
18	2.005e-11\\
19	3.6617e-12\\
20	6.4826e-13\\
21	1.1725e-13\\
22	1.9774e-14\\
23	3.3466e-15\\
24	6.5571e-16\\
25	8.0562e-17\\
26	2.1265e-16\\
27	2.3902e-16\\
28	1.8721e-16\\
29	6.3543e-16\\
30	1.8623e-16\\
31	1.8397e-16\\
32	3.9301e-16\\
33	1.1821e-15\\
34	1.3405e-15\\
35	1.4141e-15\\
36	1.3505e-15\\
37	4.2914e-16\\
38	4.9204e-16\\
39	5.2594e-16\\
40	1.7469e-16\\
41	8.0767e-16\\
42	5.3266e-16\\
43	2.759e-16\\
44	4.1512e-16\\
45	8.987e-16\\
46	1.2071e-16\\
47	1.0409e-15\\
48	8.9827e-16\\
49	8.9766e-16\\
50	9.2646e-16\\
};
\addplot [color=green, draw=none, mark=triangle, mark options={solid, green}, forget plot]
  table[row sep=crcr]{%
1	1.2779\\
2	0.58622\\
3	0.14196\\
4	0.05502\\
5	0.012049\\
6	0.0035012\\
7	0.00084205\\
8	0.00017829\\
9	4.496e-05\\
10	8.8653e-06\\
11	1.9715e-06\\
12	4.0179e-07\\
13	7.9209e-08\\
14	1.6153e-08\\
15	3.0095e-09\\
16	5.8979e-10\\
17	1.0825e-10\\
18	2.005e-11\\
19	3.6618e-12\\
20	6.4825e-13\\
21	1.1649e-13\\
22	2.0109e-14\\
23	3.5251e-15\\
24	6.0044e-16\\
25	9.7735e-17\\
26	3.0803e-17\\
27	1.4072e-17\\
28	1.3911e-17\\
29	2.7756e-17\\
30	1.3889e-17\\
31	1.3882e-17\\
32	1.3985e-18\\
33	1.3884e-17\\
34	6.912e-19\\
35	1.5472e-19\\
36	4.1634e-17\\
37	0\\
38	1.3878e-17\\
39	1.3878e-17\\
40	0\\
41	1.3878e-17\\
42	0\\
43	0\\
44	0\\
45	0\\
46	0\\
47	0\\
48	0\\
49	1.3878e-17\\
50	0\\
};
\addplot [color=red, draw=none, mark=o, mark options={solid, red}, forget plot]
  table[row sep=crcr]{%
1	1.2779\\
2	0.58622\\
3	0.14196\\
4	0.05502\\
5	0.012049\\
6	0.0035012\\
7	0.00084205\\
8	0.00017829\\
9	4.496e-05\\
10	8.8653e-06\\
11	1.9715e-06\\
12	4.0179e-07\\
13	7.9209e-08\\
14	1.6153e-08\\
15	3.0095e-09\\
16	5.8979e-10\\
17	1.0825e-10\\
18	2.005e-11\\
19	3.6618e-12\\
20	6.4825e-13\\
21	1.1649e-13\\
22	2.0109e-14\\
23	3.5251e-15\\
24	6.0044e-16\\
25	9.7735e-17\\
26	3.0803e-17\\
27	1.4072e-17\\
28	1.3911e-17\\
29	2.776e-17\\
30	1.39e-17\\
31	2.8357e-17\\
32	2.1797e-17\\
33	2.4361e-16\\
34	1.6015e-15\\
35	7.774e-15\\
36	6.9574e-14\\
37	2.9079e-13\\
38	2.6769e-12\\
39	1.5108e-11\\
40	8.2047e-11\\
41	8.4627e-10\\
42	2.5468e-09\\
43	3.4598e-08\\
44	1.3909e-07\\
45	1.6613e-06\\
46	9.929e-06\\
47	6.6806e-05\\
48	0.00025654\\
49	0.00036014\\
50	0.07107\\
};
\end{axis}

\begin{axis}[%
width=0.411\figurewidth,
height=\figureheight,
at={(0.54\figurewidth,0\figureheight)},
scale only axis,
xmin=0,
xmax=50,
xlabel style={font=\color{white!15!black}},
xlabel={$i$},
ymode=log,
ymin=1.4e-17,
ymax=1,
yminorticks=true,
axis background/.style={fill=white}
]
\addplot [color=green, draw=none, mark=triangle, mark options={solid, green}, forget plot]
  table[row sep=crcr]{%
1	1.1102e-16\\
2	5.5511e-16\\
3	1.4592e-16\\
4	2.0304e-16\\
5	1.4206e-17\\
6	1.6839e-16\\
7	1.6792e-16\\
8	1.5403e-16\\
9	7.9685e-17\\
10	1.9712e-16\\
11	1.3916e-16\\
12	1.974e-16\\
13	5.4646e-16\\
14	1.9014e-16\\
15	4.1903e-16\\
16	6.0328e-16\\
17	6.1318e-16\\
18	2.7652e-16\\
19	2.1224e-16\\
20	6.8126e-16\\
21	7.9007e-16\\
22	7.11e-16\\
23	3.1997e-16\\
24	1.1611e-16\\
25	1.6305e-16\\
26	2.1767e-16\\
27	2.2619e-16\\
28	1.9033e-16\\
29	6.1371e-16\\
30	1.9368e-16\\
31	1.7158e-16\\
32	3.9338e-16\\
33	1.1958e-15\\
34	1.3405e-15\\
35	1.4142e-15\\
36	1.3124e-15\\
37	4.2914e-16\\
38	4.7953e-16\\
39	5.1798e-16\\
40	1.7469e-16\\
41	8.1792e-16\\
42	5.3266e-16\\
43	2.759e-16\\
44	4.1512e-16\\
45	8.987e-16\\
46	1.2071e-16\\
47	1.0409e-15\\
48	8.9827e-16\\
49	8.9102e-16\\
50	9.2646e-16\\
};
\addplot [color=red, draw=none, mark=o, mark options={solid, red}, forget plot]
  table[row sep=crcr]{%
1	1.1102e-16\\
2	5.5511e-16\\
3	1.4592e-16\\
4	2.0304e-16\\
5	1.4206e-17\\
6	1.6839e-16\\
7	1.6792e-16\\
8	1.5403e-16\\
9	7.9685e-17\\
10	1.9712e-16\\
11	1.3916e-16\\
12	1.974e-16\\
13	5.4646e-16\\
14	1.9014e-16\\
15	4.1903e-16\\
16	6.0328e-16\\
17	6.1318e-16\\
18	2.7652e-16\\
19	2.1224e-16\\
20	6.8126e-16\\
21	7.9007e-16\\
22	7.11e-16\\
23	3.1997e-16\\
24	1.1611e-16\\
25	1.6305e-16\\
26	2.1767e-16\\
27	2.2619e-16\\
28	1.9033e-16\\
29	6.1348e-16\\
30	1.9349e-16\\
31	1.5641e-16\\
32	4.1095e-16\\
33	9.8583e-16\\
34	1.058e-15\\
35	8.8186e-15\\
36	6.965e-14\\
37	2.9113e-13\\
38	2.6768e-12\\
39	1.5109e-11\\
40	8.2047e-11\\
41	8.4627e-10\\
42	2.5468e-09\\
43	3.4598e-08\\
44	1.3909e-07\\
45	1.6613e-06\\
46	9.929e-06\\
47	6.6806e-05\\
48	0.00025654\\
49	0.00036014\\
50	0.07107\\
};
\end{axis}
\end{tikzpicture}%
			\vspace{-0.2cm}
			\caption{$\omega=1$ and $M=50$}
		\end{subfigure}
		\begin{subfigure}{\textwidth}
			\centering
			\setlength\figureheight{1.5cm}
			\setlength\figurewidth{11cm}
%
\begin{tikzpicture}

\begin{axis}[%
width=0.411\figurewidth,
height=\figureheight,
at={(0\figurewidth,0\figureheight)},
scale only axis,
xmin=0,
xmax=100,
xlabel style={font=\color{white!15!black}},
xlabel={$i$},
ymode=log,
ymin=1e-20,
ymax=1.0695,
yminorticks=true,
axis background/.style={fill=white}
]
\addplot [color=blue, draw=none, mark=asterisk, mark options={solid, blue}, forget plot]
  table[row sep=crcr]{%
1	1.0695\\
2	0.19471\\
3	0.11358\\
4	0.71983\\
5	0.21625\\
6	0.43187\\
7	0.080576\\
8	0.11009\\
9	0.14135\\
10	0.078552\\
11	0.081259\\
12	0.031691\\
13	0.029237\\
14	0.017108\\
15	0.016424\\
16	0.0097269\\
17	0.0069886\\
18	0.0043262\\
19	0.0022188\\
20	0.0023207\\
21	0.00089146\\
22	0.0010293\\
23	0.00044067\\
24	0.00033767\\
25	0.00023937\\
26	0.00010149\\
27	0.00011026\\
28	4.015e-05\\
29	3.9926e-05\\
30	1.9976e-05\\
31	1.2522e-05\\
32	9.2472e-06\\
33	4.125e-06\\
34	3.5906e-06\\
35	1.6089e-06\\
36	1.2198e-06\\
37	6.7246e-07\\
38	3.9927e-07\\
39	2.6371e-07\\
40	1.3551e-07\\
41	9.4347e-08\\
42	4.8283e-08\\
43	3.1952e-08\\
44	1.7514e-08\\
45	1.0606e-08\\
46	6.2262e-09\\
47	3.4907e-09\\
48	2.1456e-09\\
49	1.1508e-09\\
50	7.1869e-10\\
51	3.8343e-10\\
52	2.3357e-10\\
53	1.2855e-10\\
54	7.3927e-11\\
55	4.2783e-11\\
56	2.3192e-11\\
57	1.3911e-11\\
58	7.3434e-12\\
59	4.38e-12\\
60	2.3535e-12\\
61	1.3446e-12\\
62	7.5138e-13\\
63	4.0781e-13\\
64	2.3548e-13\\
65	1.2422e-13\\
66	7.2019e-14\\
67	3.9083e-14\\
68	2.0129e-14\\
69	1.1614e-14\\
70	5.324e-15\\
71	4.2055e-15\\
72	8.3022e-16\\
73	2.5681e-15\\
74	2.9316e-16\\
75	2.0515e-15\\
76	8.0248e-16\\
77	1.7567e-15\\
78	7.5574e-16\\
79	6.5207e-16\\
80	8.8577e-16\\
81	1.8052e-16\\
82	1.7796e-15\\
83	1.6474e-15\\
84	8.949e-16\\
85	1.4022e-15\\
86	1.0757e-15\\
87	1.3328e-15\\
88	6.1024e-16\\
89	1.0488e-15\\
90	1.6175e-15\\
91	3.5955e-16\\
92	6.2602e-16\\
93	8.7881e-16\\
94	5.6e-16\\
95	8.0707e-16\\
96	8.918e-16\\
97	6.5055e-16\\
98	3.3018e-16\\
99	9.7946e-16\\
100	1.7607e-15\\
};
\addplot [color=green, draw=none, mark=triangle, mark options={solid, green}, forget plot]
  table[row sep=crcr]{%
1	1.0695\\
2	0.19471\\
3	0.11358\\
4	0.71983\\
5	0.21625\\
6	0.43187\\
7	0.080576\\
8	0.11009\\
9	0.14135\\
10	0.078552\\
11	0.081259\\
12	0.031691\\
13	0.029237\\
14	0.017108\\
15	0.016424\\
16	0.0097269\\
17	0.0069886\\
18	0.0043262\\
19	0.0022188\\
20	0.0023207\\
21	0.00089146\\
22	0.0010293\\
23	0.00044067\\
24	0.00033767\\
25	0.00023937\\
26	0.00010149\\
27	0.00011026\\
28	4.015e-05\\
29	3.9926e-05\\
30	1.9976e-05\\
31	1.2522e-05\\
32	9.2472e-06\\
33	4.125e-06\\
34	3.5906e-06\\
35	1.6089e-06\\
36	1.2198e-06\\
37	6.7246e-07\\
38	3.9927e-07\\
39	2.6371e-07\\
40	1.3551e-07\\
41	9.4347e-08\\
42	4.8283e-08\\
43	3.1952e-08\\
44	1.7514e-08\\
45	1.0606e-08\\
46	6.2262e-09\\
47	3.4907e-09\\
48	2.1456e-09\\
49	1.1508e-09\\
50	7.1869e-10\\
51	3.8343e-10\\
52	2.3357e-10\\
53	1.2855e-10\\
54	7.3927e-11\\
55	4.2783e-11\\
56	2.3192e-11\\
57	1.391e-11\\
58	7.3444e-12\\
59	4.3803e-12\\
60	2.3525e-12\\
61	1.3439e-12\\
62	7.5145e-13\\
63	4.0839e-13\\
64	2.3554e-13\\
65	1.2472e-13\\
66	7.196e-14\\
67	3.8309e-14\\
68	2.1573e-14\\
69	1.1735e-14\\
70	6.407e-15\\
71	3.5618e-15\\
72	1.8696e-15\\
73	1.0945e-15\\
74	5.2974e-16\\
75	2.734e-16\\
76	1.1247e-16\\
77	6.9389e-18\\
78	1.3878e-17\\
79	1.3878e-17\\
80	1.3878e-17\\
81	2.0817e-17\\
82	0\\
83	6.9389e-18\\
84	6.9389e-18\\
85	2.0817e-17\\
86	6.9389e-18\\
87	1.3878e-17\\
88	6.9389e-18\\
89	0\\
90	6.9389e-18\\
91	6.9389e-18\\
92	6.9389e-18\\
93	6.9389e-18\\
94	2.0817e-17\\
95	0\\
96	6.9389e-18\\
97	1.3878e-17\\
98	6.9389e-18\\
99	6.9389e-18\\
100	0\\
};
\addplot [color=red, draw=none, mark=o, mark options={solid, red}, forget plot]
  table[row sep=crcr]{%
1	1.0695\\
2	0.19471\\
3	0.11358\\
4	0.71983\\
5	0.21625\\
6	0.43187\\
7	0.080576\\
8	0.11009\\
9	0.14135\\
10	0.078552\\
11	0.081259\\
12	0.031691\\
13	0.029237\\
14	0.017108\\
15	0.016424\\
16	0.0097269\\
17	0.0069886\\
18	0.0043262\\
19	0.0022188\\
20	0.0023207\\
21	0.00089146\\
22	0.0010293\\
23	0.00044067\\
24	0.00033767\\
25	0.00023937\\
26	0.00010149\\
27	0.00011026\\
28	4.015e-05\\
29	3.9926e-05\\
30	1.9976e-05\\
31	1.2522e-05\\
32	9.2472e-06\\
33	4.125e-06\\
34	3.5906e-06\\
35	1.6089e-06\\
36	1.2198e-06\\
37	6.7246e-07\\
38	3.9927e-07\\
39	2.6371e-07\\
40	1.3551e-07\\
41	9.4347e-08\\
42	4.8283e-08\\
43	3.1952e-08\\
44	1.7514e-08\\
45	1.0606e-08\\
46	6.2262e-09\\
47	3.4907e-09\\
48	2.1456e-09\\
49	1.1508e-09\\
50	7.1869e-10\\
51	3.8343e-10\\
52	2.3357e-10\\
53	1.2855e-10\\
54	7.3926e-11\\
55	4.2782e-11\\
56	2.3189e-11\\
57	1.3905e-11\\
58	7.3404e-12\\
59	4.3581e-12\\
60	2.36e-12\\
61	1.3334e-12\\
62	9.8785e-13\\
63	1.3866e-12\\
64	2.1532e-12\\
65	4.0738e-12\\
66	7.4688e-12\\
67	1.2282e-11\\
68	2.4998e-11\\
69	3.8936e-11\\
70	7.8961e-11\\
71	1.2761e-10\\
72	2.5566e-10\\
73	4.2993e-10\\
74	8.7521e-10\\
75	1.4251e-09\\
76	2.8022e-09\\
77	4.9044e-09\\
78	8.0612e-09\\
79	1.9807e-08\\
80	2.3417e-08\\
81	7.6255e-08\\
82	7.4617e-08\\
83	2.266e-07\\
84	3.4543e-07\\
85	4.8803e-07\\
86	1.8824e-06\\
87	8.5547e-07\\
88	7.4452e-06\\
89	2.5744e-06\\
90	1.7999e-05\\
91	2.8734e-05\\
92	2.1725e-05\\
93	0.00021627\\
94	7.1622e-05\\
95	0.0007805\\
96	0.00041988\\
97	0.00077714\\
98	0.00052454\\
99	0.00014405\\
100	0.050135\\
};
\end{axis}

\begin{axis}[%
width=0.411\figurewidth,
height=\figureheight,
at={(0.54\figurewidth,0\figureheight)},
scale only axis,
xmin=0,
xmax=100,
xlabel style={font=\color{white!15!black}},
xlabel={$i$},
ymode=log,
ymin=1.4e-17,
ymax=1,
yminorticks=true,
axis background/.style={fill=white}
]
\addplot [color=green, draw=none, mark=triangle, mark options={solid, green}, forget plot]
  table[row sep=crcr]{%
1	7.8505e-16\\
2	4.4841e-16\\
3	6.5096e-16\\
4	3.5108e-16\\
5	2.7894e-16\\
6	4.996e-16\\
7	1.1281e-16\\
8	1.8981e-16\\
9	1.3092e-16\\
10	3.2524e-16\\
11	8.5829e-17\\
12	6.8362e-17\\
13	2.5004e-16\\
14	2.9218e-16\\
15	8.6948e-17\\
16	3.4485e-16\\
17	3.4991e-16\\
18	3.6397e-16\\
19	2.8738e-16\\
20	1.6373e-16\\
21	4.2864e-16\\
22	3.1303e-16\\
23	2.6586e-16\\
24	5.7199e-16\\
25	2.4804e-16\\
26	2.4632e-16\\
27	8.0793e-16\\
28	5.3183e-16\\
29	2.4735e-16\\
30	5.2816e-16\\
31	1.5234e-16\\
32	1.6257e-16\\
33	2.2511e-16\\
34	1.8115e-16\\
35	6.669e-16\\
36	2.8379e-16\\
37	6.2403e-16\\
38	8.9401e-16\\
39	5.6945e-16\\
40	9.6932e-16\\
41	1.7158e-15\\
42	4.9795e-16\\
43	1.8977e-16\\
44	6.0425e-16\\
45	2.5653e-16\\
46	8.0142e-16\\
47	8.5107e-16\\
48	8.5174e-16\\
49	8.955e-16\\
50	5.9469e-16\\
51	9.9297e-16\\
52	1.1598e-15\\
53	2.7542e-16\\
54	8.9373e-16\\
55	5.7239e-16\\
56	4.0884e-16\\
57	1.3749e-15\\
58	1.3082e-15\\
59	4.697e-16\\
60	1.2934e-15\\
61	7.0872e-16\\
62	6.4956e-16\\
63	7.2448e-16\\
64	1.2476e-16\\
65	5.1817e-16\\
66	1.4929e-15\\
67	1.9915e-15\\
68	1.5859e-15\\
69	6.2782e-16\\
70	1.374e-15\\
71	6.7495e-16\\
72	1.1203e-15\\
73	1.5448e-15\\
74	5.1604e-16\\
75	1.8604e-15\\
76	7.7382e-16\\
77	1.7511e-15\\
78	7.6743e-16\\
79	6.6593e-16\\
80	8.9964e-16\\
81	1.8847e-16\\
82	1.7796e-15\\
83	1.654e-15\\
84	8.959e-16\\
85	1.4089e-15\\
86	1.0826e-15\\
87	1.323e-15\\
88	6.0644e-16\\
89	1.0488e-15\\
90	1.6205e-15\\
91	3.6002e-16\\
92	6.2805e-16\\
93	8.8166e-16\\
94	5.5223e-16\\
95	8.0707e-16\\
96	8.9451e-16\\
97	6.5945e-16\\
98	3.2517e-16\\
99	9.7796e-16\\
100	1.7607e-15\\
};
\addplot [color=red, draw=none, mark=o, mark options={solid, red}, forget plot]
  table[row sep=crcr]{%
1	7.8505e-16\\
2	4.4841e-16\\
3	6.5096e-16\\
4	3.5108e-16\\
5	2.7894e-16\\
6	4.996e-16\\
7	1.1281e-16\\
8	1.8981e-16\\
9	1.3092e-16\\
10	3.2524e-16\\
11	8.5829e-17\\
12	6.8362e-17\\
13	2.5004e-16\\
14	2.9218e-16\\
15	8.6948e-17\\
16	3.4485e-16\\
17	3.4991e-16\\
18	3.6397e-16\\
19	2.8738e-16\\
20	1.6373e-16\\
21	4.2864e-16\\
22	3.1303e-16\\
23	2.6586e-16\\
24	5.7199e-16\\
25	2.4804e-16\\
26	2.4632e-16\\
27	8.0793e-16\\
28	5.3206e-16\\
29	2.4744e-16\\
30	5.2909e-16\\
31	1.5234e-16\\
32	1.6255e-16\\
33	2.2527e-16\\
34	1.8114e-16\\
35	6.7398e-16\\
36	2.8417e-16\\
37	6.3778e-16\\
38	8.9485e-16\\
39	5.6837e-16\\
40	9.8242e-16\\
41	1.7135e-15\\
42	4.7987e-16\\
43	2.0637e-16\\
44	5.6586e-16\\
45	3.117e-16\\
46	7.4776e-16\\
47	9.572e-16\\
48	9.4789e-16\\
49	1.3921e-15\\
50	1.4617e-15\\
51	8.421e-16\\
52	3.8142e-15\\
53	4.7553e-15\\
54	8.1411e-15\\
55	1.3492e-14\\
56	2.7092e-14\\
57	4.1644e-14\\
58	7.5979e-14\\
59	1.3811e-13\\
60	2.1851e-13\\
61	4.5219e-13\\
62	6.5629e-13\\
63	1.3883e-12\\
64	2.1662e-12\\
65	4.077e-12\\
66	7.4834e-12\\
67	1.2281e-11\\
68	2.5005e-11\\
69	3.8934e-11\\
70	7.8961e-11\\
71	1.2761e-10\\
72	2.5566e-10\\
73	4.2993e-10\\
74	8.7521e-10\\
75	1.4251e-09\\
76	2.8022e-09\\
77	4.9044e-09\\
78	8.0612e-09\\
79	1.9807e-08\\
80	2.3417e-08\\
81	7.6255e-08\\
82	7.4617e-08\\
83	2.266e-07\\
84	3.4543e-07\\
85	4.8803e-07\\
86	1.8824e-06\\
87	8.5547e-07\\
88	7.4452e-06\\
89	2.5744e-06\\
90	1.7999e-05\\
91	2.8734e-05\\
92	2.1725e-05\\
93	0.00021627\\
94	7.1622e-05\\
95	0.0007805\\
96	0.00041988\\
97	0.00077714\\
98	0.00052454\\
99	0.00014405\\
100	0.050135\\
};
\end{axis}
\end{tikzpicture}%
			\vspace{-0.2cm}
			\caption{$\omega=5$ and $M=100$}
		\end{subfigure}
		\begin{subfigure}{\textwidth}
			\centering
			\setlength\figureheight{1.5cm}
			\setlength\figurewidth{11cm}
%
\begin{tikzpicture}

\begin{axis}[%
width=0.411\figurewidth,
height=\figureheight,
at={(0\figurewidth,0\figureheight)},
scale only axis,
xmin=0,
xmax=50,
xlabel style={font=\color{white!15!black}},
xlabel={$i$},
ymode=log,
ymin=1e-20,
ymax=1.0695,
yminorticks=true,
axis background/.style={fill=white}
]
\addplot [color=blue, draw=none, mark=asterisk, mark options={solid, blue}, forget plot]
  table[row sep=crcr]{%
1	1.0695\\
2	0.19471\\
3	0.11358\\
4	0.71983\\
5	0.21625\\
6	0.43187\\
7	0.080576\\
8	0.11009\\
9	0.14135\\
10	0.078552\\
11	0.081259\\
12	0.031691\\
13	0.029237\\
14	0.017108\\
15	0.016424\\
16	0.0097269\\
17	0.0069886\\
18	0.0043262\\
19	0.0022188\\
20	0.0023207\\
21	0.00089146\\
22	0.0010293\\
23	0.00044067\\
24	0.00033767\\
25	0.00023937\\
26	0.00010149\\
27	0.00011026\\
28	4.015e-05\\
29	3.9926e-05\\
30	1.9976e-05\\
31	1.2522e-05\\
32	9.2472e-06\\
33	4.125e-06\\
34	3.5906e-06\\
35	1.6089e-06\\
36	1.2198e-06\\
37	6.7246e-07\\
38	3.9927e-07\\
39	2.6371e-07\\
40	1.3551e-07\\
41	9.434e-08\\
42	4.8279e-08\\
43	3.1962e-08\\
44	1.752e-08\\
45	1.0588e-08\\
46	6.2161e-09\\
47	3.527e-09\\
48	2.1674e-09\\
49	9.9571e-10\\
50	6.2369e-10\\
};
\addplot [color=green, draw=none, mark=triangle, mark options={solid, green}, forget plot]
  table[row sep=crcr]{%
1	1.0695\\
2	0.19471\\
3	0.11358\\
4	0.71983\\
5	0.21625\\
6	0.43187\\
7	0.080575\\
8	0.11009\\
9	0.14135\\
10	0.078552\\
11	0.081259\\
12	0.03169\\
13	0.029236\\
14	0.017107\\
15	0.016423\\
16	0.0097258\\
17	0.0069871\\
18	0.0043238\\
19	0.0022168\\
20	0.0023129\\
21	0.00088717\\
22	0.0010181\\
23	0.0004354\\
24	0.00033136\\
25	0.00015613\\
26	7.3216e-05\\
27	1.3878e-17\\
28	0\\
29	0\\
30	0\\
31	1.3878e-17\\
32	1.3878e-17\\
33	0\\
34	0\\
35	0\\
36	1.3878e-17\\
37	0\\
38	1.3878e-17\\
39	1.3878e-17\\
40	0\\
41	1.3878e-17\\
42	0\\
43	0\\
44	0\\
45	0\\
46	0\\
47	0\\
48	0\\
49	1.3878e-17\\
50	0\\
};
\addplot [color=red, draw=none, mark=o, mark options={solid, red}, forget plot]
  table[row sep=crcr]{%
1	1.0695\\
2	0.19471\\
3	0.11358\\
4	0.71983\\
5	0.21625\\
6	0.43187\\
7	0.080576\\
8	0.11009\\
9	0.14135\\
10	0.078552\\
11	0.081259\\
12	0.031691\\
13	0.029237\\
14	0.017108\\
15	0.016424\\
16	0.0097269\\
17	0.0069886\\
18	0.0043262\\
19	0.0022188\\
20	0.0023207\\
21	0.00089145\\
22	0.0010293\\
23	0.00044064\\
24	0.00033761\\
25	0.00023929\\
26	0.00010139\\
27	0.00011006\\
28	3.9937e-05\\
29	3.9482e-05\\
30	1.9423e-05\\
31	1.1402e-05\\
32	7.9547e-06\\
33	2.8393e-06\\
34	1.2913e-06\\
35	6.4946e-06\\
36	1.3165e-05\\
37	9.3857e-06\\
38	5.2578e-05\\
39	1.2134e-05\\
40	0.00012263\\
41	9.2442e-05\\
42	0.00014173\\
43	0.00066911\\
44	0.00021871\\
45	0.0023496\\
46	0.0012397\\
47	0.002295\\
48	0.001525\\
49	0.00040927\\
50	0.071122\\
};
\end{axis}

\begin{axis}[%
width=0.411\figurewidth,
height=\figureheight,
at={(0.54\figurewidth,0\figureheight)},
scale only axis,
xmin=0,
xmax=50,
xlabel style={font=\color{white!15!black}},
xlabel={$i$},
ymode=log,
ymin=1e-10,
ymax=1,
yminorticks=true,
axis background/.style={fill=white}
]
\addplot [color=green, draw=none, mark=triangle, mark options={solid, green}, forget plot]
  table[row sep=crcr]{%
1	4.0182e-07\\
2	3.419e-07\\
3	2.5822e-07\\
4	3.1892e-07\\
5	3.4943e-08\\
6	2.748e-07\\
7	2.162e-07\\
8	2.023e-07\\
9	2.9768e-07\\
10	2.5042e-07\\
11	3.8582e-07\\
12	3.7063e-07\\
13	6.1847e-07\\
14	6.387e-07\\
15	1.1063e-06\\
16	1.1179e-06\\
17	1.513e-06\\
18	2.6678e-06\\
19	2.1686e-06\\
20	7.7966e-06\\
21	4.336e-06\\
22	1.1267e-05\\
23	5.2911e-06\\
24	6.3224e-06\\
25	8.4739e-05\\
26	2.8301e-05\\
27	0.00011026\\
28	4.015e-05\\
29	3.9926e-05\\
30	1.9976e-05\\
31	1.2522e-05\\
32	9.2472e-06\\
33	4.125e-06\\
34	3.5906e-06\\
35	1.6089e-06\\
36	1.2198e-06\\
37	6.7246e-07\\
38	3.9927e-07\\
39	2.6371e-07\\
40	1.3551e-07\\
41	9.434e-08\\
42	4.8279e-08\\
43	3.1962e-08\\
44	1.752e-08\\
45	1.0588e-08\\
46	6.2161e-09\\
47	3.527e-09\\
48	2.1674e-09\\
49	9.9571e-10\\
50	6.2369e-10\\
};
\addplot [color=red, draw=none, mark=o, mark options={solid, red}, forget plot]
  table[row sep=crcr]{%
1	1.4266e-09\\
2	1.218e-09\\
3	7.9253e-10\\
4	1.0834e-09\\
5	1.8414e-10\\
6	8.8249e-10\\
7	7.1016e-10\\
8	5.8769e-10\\
9	8.5991e-10\\
10	8.6427e-10\\
11	1.085e-09\\
12	1.3244e-09\\
13	1.7159e-09\\
14	1.9984e-09\\
15	3.0517e-09\\
16	3.4376e-09\\
17	5.3633e-09\\
18	6.902e-09\\
19	9.6487e-09\\
20	1.4701e-08\\
21	1.9032e-08\\
22	3.0435e-08\\
23	3.9962e-08\\
24	6.5075e-08\\
25	8.8302e-08\\
26	1.5359e-07\\
27	1.9947e-07\\
28	3.4153e-07\\
29	4.9316e-07\\
30	6.6743e-07\\
31	1.4865e-06\\
32	1.3348e-06\\
33	4.0059e-06\\
34	3.3638e-06\\
35	7.7162e-06\\
36	1.4139e-05\\
37	1.0004e-05\\
38	5.2915e-05\\
39	1.237e-05\\
40	0.00012275\\
41	9.251e-05\\
42	0.00014177\\
43	0.00066914\\
44	0.0002187\\
45	0.0023496\\
46	0.0012397\\
47	0.002295\\
48	0.001525\\
49	0.00040927\\
50	0.071122\\
};
\end{axis}
\end{tikzpicture}%
			\vspace{-0.2cm}
			\caption{$\omega=5$ and $M=50$}
		\end{subfigure}
		\caption{Legendre coefficients obtained by system solve for the coefficient matrix $F^{(N)}_M$ for $\tilde{f}(t) = -\imath \omega \sin(\omega (t+1))$. Left: Legendre coefficients, exact $c$ (${\color{blue} \ast}$), approximation $\dot{c}$ (${\color{red} \circ}$), approximation with truncation $\underline{\dot{c}}$ (${\color{green} \triangle}$).
			Right: the error on the computed coefficients, $\vert c-\dot{c}\vert$ ({\color{red}$\circ$}) and $\vert c - \underline{\dot{c}}\vert$ ({\color{green} $\triangle$}).}
		\label{fig:coeffs_DE}
	\end{figure}

	In Figure \ref{fig:coeffs_DE}, we also show the Legendre coefficients and their error for two other choices of parameters.
	For $\omega=5$ and $M = 100$, we observe similar behavior as for $\omega=1$.	
	For $\omega=5$ and $M = 50$, $M$ is chosen smaller than the minimal required size to represent $\tilde{u}(t)$ up to machine precision.
	Now the first coefficients in $\underline{\dot{c}}$ are computed less accurately than $\dot{c}$ due to the truncation.
	This might be because the last $N=24$ rows of the coefficient matrix $F^{(N)}_M$ of size $M=60$ are truncated, thereby discarding too many entries.

	\subsection{Fast computation of Fourier coefficients}\label{sec:fastComp}
	The first subproblem in Problem \ref{prob:matrixProblem} is to construct the coefficient matrix $F$.
	We have shown that in fact it suffices to approximate $F$ by the finite banded matrix
	\begin{equation*}
		F_M^{(N)}=\sum_{d=0}^{N} \alpha_d B^{(d)}_M.
	\end{equation*}
	The efficient construction of the coefficient matrix thus requires efficient computation of $\{\alpha_d\}_{d\geq 0}$ and $\{B^{(d)}_M\}_{d\geq 0}$.
	Since $B_M^{(d)}$ are the coefficient matrices of the basis, they must be computed only once for each $d$ and can then be reused for different functions.\\
	First an efficient algorithm to approximate the Legendre coefficients $\{\alpha_d\}_{d=0}^{N}$ of $\tilde{f}(t)$ is discussed.
	We use functions that are available in the MATLAB package \verb|chebfun| \cite{DrHaTr14} .
	The Legendre coefficients for the smooth function $\tilde{f}(t)$ are given by $\alpha_d= \int_{-1}^{1} \tilde{f}(t) p_d(t)$ and will be approximated by $\{\hat{\alpha}_d\}_{d=0}^N$ as follows:
	\begin{enumerate}
		\item Using \verb|chebfun|, we compute the coefficients of the interpolating Chebyshev series $\sum_{k=0}^N \hat{c}_d T_d(t) \approx \tilde{f}(t)$. Given a required accuracy, an appropriate truncation value $N$ is chosen automatically \cite{AuTr17}. 
		The coefficients $\{\hat{c}_d\}_{d=0}^N$ are obtained at complexity $\mathcal{O}(N \log(N))$.
		For details on the error incurred by interpolation, instead of by computing the integral $c_d = \int_{-1}^{1} \frac{\tilde{f}(t) T_d(t)}{\sqrt{1-t^2}}dt$, we refer to the book by Trefethen \cite[Chapter 4]{Tr13}.
		\item The Chebyshev coefficients $\{\hat{c}_d\}_{d=0}^N$ can be transformed into Legendre coefficients $\{\hat{\alpha}_d\}_{d=0}^N$ at complexity $\mathcal{O}(N \log^2(N))$ by using the method proposed by Townsend, et al. \cite{ToWeOl18}.
		In \verb|chebfun|, this method is available under the name \verb|cheb2leg|.
		This transformation from Chebyshev to Legendre coefficients is expected to have a worst case error growth of $\mathcal{O}(\sqrt{N}\log(N))$, and for a fast decaying set of coefficients $\{\hat{c}_d\}_{d=0}^{N}$ Townsend and collaborators have observed numerically that there is no error growth with $N$.
	\end{enumerate}
	Thus, at an overall complexity of $\mathcal{O}(N\log^2 (N))$ we are able to compute the coefficients $\{\hat{\alpha}_d\}_{d=0}^N$ representing $\tilde{f}(t)$ in the Legendre basis.
	The coefficient matrix $F^{(N)}_M = \sum_{d=0}^{N} \alpha_d B^{(d)}_M $ can be accurately approximated by $\hat{F}_{M}^{(N)} = \sum_{d=0}^N \hat{\alpha}_d B^{(d)}_M$.
	
	Second, we want to compute and store the Legendre basis matrices $B^{(d)}_M$ efficiently.
	Equation~\eqref{eq:LegBasisCoeffs} provides an expression for the entries $b_{k,\ell}^{(d)}$ in terms of integrals of the triple product of Legendre polynomials $\mathcal{F}_{a,b,c}$.
	Our approach is based on expressing the matrix $\left[\mathcal{F}_{a,b,c}\right]_{b,c=0}^{M-1}$ as
	\begin{equation*}
		\left[\mathcal{F}_{a,b,c}\right]_{b,c=0}^{M-1} = \sqrt{2a+1} \left(C_M\circ H_M\circ T_M\right),
	\end{equation*}
	with $C_M = \left[\sqrt{(2b+1)}\sqrt{(2c+1)}\right]_{b,c=0}^{M-1}$, $H_M$ a Hankel matrix and $T_M$ a Toeplitz matrix.
	The Hankel matrix depends on the value $\gamma := b+c$ and is characterized completely by its last row $r_H^\top$ and first column $c_H$.
	Let us define a function that generates the entries in $H_M$,
	\begin{equation*}
		h(a,\gamma) :=  \frac{1}{(a+\gamma+1)} \left(\prod_{j=1}^{a}\frac{-a+\gamma+2j}{-d+\gamma+2j-1}\right),
	\end{equation*}
	then, for $(m+a)$ even, the last row and first column are:
	\begin{align*}
		r_H^\top &= \begin{bmatrix}
			h(a,m) & 0 & h(a,m+2) & 0 & \dots & h(a,2m-2) & 0 & h(a,2m) 
		\end{bmatrix}^\top,\\
		c_H &= \begin{bmatrix}
			\smash[b]{\block{a}} & h(a,a) & 0 & h(a,a+2) & 0 & \dots & h(a,m-2) & 0 & h(a,m) 
		\end{bmatrix}^\top \\
		&\quad
	\end{align*}
	and, for $(m+a)$ odd:
	\begin{align*}
		r_H^\top &= \begin{bmatrix}
			h(a,m) & 0 & h(a,m+2) & 0 & \dots & 0 & h(a,2m-1) & 0 
		\end{bmatrix}^\top,\\
		c_H &= \begin{bmatrix}
			\smash[b]{\block{a}} & h(a,a) & 0 & h(a,a+2) & 0 & \dots & 0 & h(a,m-1) & 0 
		\end{bmatrix}^\top.\\
		&\quad
	\end{align*}
	The Toeplitz matrix depends on $\alpha := \vert b-c\vert$ and since the Toeplitz matrix $T_M$ is symmetric, it is characterized its first column $c_T$.
	Using the following function generating the entries of $T_M$,
	\begin{equation*}
		t(a,\alpha) := \frac{1}{2^{(2a+1/2)}} \frac{\prod_{j=(\frac{a+\alpha}{2}+1)}^{a+\alpha}j^2}{\prod_{j=1}^{\frac{a-\alpha}{2}}j^2 \prod_{j=(a-\alpha+1)}^{a+\alpha} j},
	\end{equation*}
	the first column of the Toeplitz matrix is given, for $a$ odd, by:
	\begin{equation*}
		c_T =\begin{bmatrix}
			t(a,0) & 0 & t(a,2) & 0 & \dots & t(a,a)  &	\smash[b]{\block{m-a}}\\
		\end{bmatrix}^\top
	\end{equation*}
	and, for $a$ even, by:
	\begin{align*}
		c_T = \begin{bmatrix}
			0 & t(a,1) & 0 & t(a,3) & \dots & 0 & t(a,a)  & 	\smash[b]{\block{m-a}}\\
		\end{bmatrix}^\top.\\
		\quad
	\end{align*}
	Using this expression, the matrices $\left[\mathcal{F}_{d,b,c}\right]_{b,c=0}^{M-1}$ for $d=0,1,\dots,N$ can be stored by $3(N+1)M+M^2$ numbers, instead of $(N+1)M^2$ if each matrix is stored naively.
	Further reduction of memory cost can be obtained by exploiting the zero structure of the Hankel and Toeplitz matrix.	\\
	The Legendre basis matrix of degree $d$ can now be written as
	\begin{equation}\label{eq:HadBasisMatrix}
		B_M^{(d)} = \left[b_{k,l}^{(d)}\right]_{k,l=0}^{M-1} = \sqrt{2d+1} \left(\tilde{C}_M\circ \left((\underline{H}_M\circ \underline{T}_M) Z_M\right)\right),
	\end{equation}
	where $\tilde{C}_M := \left[\frac{\sqrt{2k+1}}{\sqrt{2\ell+1}}\right]_{k,\ell=0}^{M-1}\in\mathbb{R}^{M\times M}$, $\underline{H}_M$ and $\underline{T}_M$ are, respectively, $H_{M+1}$ and $T_{M+1}$ with the last row removed and
	\begin{align*}
		Z_M &:= \begin{bmatrix}
			1 & -1 & \\
			1 & 0 & -1 &\\
			& 1 & 0 & -1 & &  \\
			&  & \ddots & \ddots & \ddots & \\
			& & & 1 & 0 & -1 \\
			&  & &  & 1 & 0\\
			& &  & & &  1
		\end{bmatrix}\in\mathbb{R}^{(M+1)\times M}.
	\end{align*}	
	The representation of the Legendre basis matrices $B_M^{(d)}$ given by Equation \eqref{eq:HadBasisMatrix} creates possibilities for the development of efficient methods for storing and computing the coefficient matrix $\hat{F}^{(N)}_M$; see, for example, the procedure proposed in \cite{ToWeOl18} for the positive semidefinite case.
	Exploring and implementing efficient memory and computational schemes further is subject of ongoing research and is out of the scope of this paper.

	\section{Proposed numerical procedure}\label{sec:NumExp}
	Using the presented results, we can replace the infinite matrix problem formulated in Problem \ref{prob:matrixProblem} by the following finite matrix problem.
	\begin{problem}[Finite matrix problem]\label{prob:matrixProblem_finite}
		Given a smooth function $\tilde{f}(t)$ and a tolerance $\delta_{\textrm{sol}}$, compute the approximate Legendre coefficients $\{\hat{c}_k\}_{k=0}^{\hat{M}}$ of the solution $\tilde{u}(t)$ to the ODE \eqref{eq:ODE_univar} such that they satisfy $\Vert \tilde{u}(t) - \sum_{d=0}^{\hat{M}}\hat{c}_d p_d(t)\Vert_{\infty} \lesssim  \delta_{\textrm{sol}}$ on the interval $t\in\left[-1,1\right]$.
		This corresponds to solving five subproblems:
		\begin{enumerate}
			\item Compute the interpolating Legendre coefficients $\{\hat{\alpha}_k\}_{k=0}^{N}$ of $\tilde{f}(t)$ for an appropriate value of $N$.
			\item Determine an appropriate value for the size $M$ of the coefficient matrix $F$ such that enough Legendre coefficients, $\hat{M}$, are computed accurately in order to reach the given tolerance, see Section \ref{sec:truncErr}.
			\item Construct the finite banded coefficient matrix $\hat{F}^{(N)}_M=\sum_{d=0}^{N} \hat{\alpha}_d B^{(d)}_M$.
			\item Solve the finite linear system of equations $(I_M-\hat{F}^{(N)}_M)\dot{x} = \phi_M(-1)$ for $\dot{x}$. The right hand side is the column vector $\phi_M(-1) = \begin{bmatrix}
				p_k(-1)
			\end{bmatrix}_{k=0}^{M-1}$ and $I_M$ is the identity matrix.
			\item Compute the finite matrix vector product $T_M \dot{x}=\hat{c}$, thereby obtaining the approximate Legendre coefficients $\{\hat{c}_k\}_{k=0}^{M}$.
		\end{enumerate}
	\end{problem}
	The procedure developed in this paper proposes to solve the subproblems in Problem \ref{prob:matrixProblem_finite} as follows:
	\begin{enumerate}
		\item Using \verb|chebfun|, the coefficients $\{\hat{\alpha}_k\}_{k=0}^{N}$ can be computed at complexity $\mathcal{O}(N\log^2(N))$, as described in Section \ref{sec:fastComp}. Moreover, for a given tolerance the value for $N$ is chosen automatically.
		\item This remains an open problem for which two possible strategies are proposed in Section \ref{sec:LegCoeffs}.
		\item Compute the sum $\hat{F}^{(N)}_M=\sum_{d=0}^{N} \hat{\alpha}_d B^{(d)}_M$. An analytical formula for the entries of $B^{(d)}_M$ is stated in Property \ref{prop:intLeg}.
		Equation \eqref{eq:HadBasisMatrix} provides a formulation for the construction of $B^{(d)}_M$ that is more memory and computationally efficient.
		\item Solve the finite system of equations $(I_M-\underline{\hat{F}}^{(N)}_M)\underline{\dot{x}} = \phi_M(-1)$, where $\underline{F}^{(N)}_M$ equals $F^{(N)}_M$ with its last $N+1$ rows set equal to zero. See Section \ref{sec:truncErr} and Section \ref{sec:LegCoeffs} for details.
		The system is solved in MATLAB by the \verb|backslash| function.
		\item Form the product $\underline{T}_{M}\underline{\dot{x}}=\hat{c}$, where $\underline{T}_M$ equals $T_M$ with its last row set to zero. 
		If requested, the Legendre series can be chopped by applying the procedure proposed by Aurentz and Trefethen \cite{AuTr17}.
		See also Section \ref{sec:LegCoeffs}.
	\end{enumerate}
	In the following, we present numerical experiments which will confirm the validity of this procedure.
	The invertibility of $(I_M-F_M^{(N)})$ is studied by its spectral properties.
	We report the numerical radius $\nu(F_M^{(N)})$ and the upper bound \eqref{eq:f:cond:decay}.
	However, these quantities are not descriptive of the observed numerical behavior, therefore we also report the pseudospectra of $(I_M-F_M^{(N)})$ which might provide a better description.
	We denote by $\sigma(A)$ the spectrum of $A$, then, for $\epsilon>0$, the $\epsilon$-pseudospectrum of $(I_M-F_M^{(N)})$ is defined by
	\begin{equation*}
		\sigma_\epsilon(I_M-F_M^{(N)}) = \{z\in\mathbb{C}\vert z\in\sigma(I_M-F_M^{(N)}+E) \text{ for some }E \text{ with }\Vert E\Vert \leq \epsilon \}.
	\end{equation*}
	See, for example, the book by Trefethen and Embree \cite{TrEm05} for details.
	
	Since, for the scalar ODEs the analytical solution $\tilde{u}(t)$ is available, we can compute the error of our approximation $\hat{u}(t):=\sum_{d=0}^M \hat{c}_d p_d(t)$. 
	This error is expressed in the maximum norm 
	\begin{equation*}
		\textrm{err}_\textrm{f} := \frac{\Vert \tilde{u}(t) - \hat{u}(t)\Vert_\infty}{\Vert \tilde{u}(t)\Vert_\infty}.
	\end{equation*}
	This error is estimated by evaluating the solution and approximation in $10M$ equidistant nodes in $t\in \left[-1,1\right]$.
	Denote by $c$ the vector of the first $M$ Legendre coefficients of $\tilde{u}(t)$, then the error on the computed Legendre coefficients $\hat{c}$ is quantified by
	\begin{equation*}
		\textrm{err}_c = \frac{\vert c-\hat{c}\vert }{\Vert c\Vert_\infty}.
	\end{equation*}

	Timings are not performed since in this paper we focus on the validity and accuracy of the procedure.
	For the scalar case studied here we do not expect to outperform state-of-the-art methods.
	However, thanks to the straightforward generalization of our procedure to the matrix case we aim to develop a procedure for the matrix ODE competitive with the state-of-the-art.
	Understanding the applicability and accuracy of the scalar case is a fundamental step towards developing a competitive procedure for the matrix case.	
	
	\subsection{Toy problem}
	The following function is constructed so that we have control over its behavior
	\begin{equation*}
		\tilde{f}(t) = -\imath \frac{\omega}{\beta} \sin(\omega (t+1)).
	\end{equation*}
	Parameter $\omega$ controls the oscillation of the function and $\beta$ controls its amplitude.
	The solution to the ODE
	\begin{equation*}
		\frac{d}{dt} \tilde{u}(t) = -\imath \frac{\omega}{\beta} \sin(\omega (t+1)) \tilde{u}(t), \quad \tilde{u}(-1) = 1,\quad \text{on } t\in \left[-1,1\right]
	\end{equation*}
	is
	\begin{equation*}
		\tilde{u}(t) = \exp\left(-\frac{\imath}{\beta} (1-\cos(\omega t+\omega))\right).
	\end{equation*}
	We report results for three different choices of parameters.
	The first choice is $\omega=5$, $\beta=10$ and we take $M=100$.
	The condition \eqref{eq:f:cond:decay} is satisfied, namely $\sum_{d=0}^N \vert \alpha_d\vert = 1.0909$ and the numerical radius is $\nu(F_{100}) = 0.2151$.
	Thus, the matrix $(I_M-F_M^{(N)})$ is nonsingular and subproblem 4 in Problem \ref{prob:matrixProblem_finite} has a unique solution.
	The approximation has accurate Legendre coefficients, $\max(\textrm{err}_c) =  1.7828\textrm{e}-15$, and a function error of $\textrm{err}_\textrm{f} = 1.3345e-15$.\\
	The second choice is $\omega=5$, $\beta=1$ for $M=100$.
	Condition \eqref{eq:f:cond:decay} is not satisfied, $\sum_{d=0}^{N} \vert \alpha_d \vert = 10.909$ and the numerical radius of the coefficient matrix is $\nu(F_{100}^{(24)}) = 2.151$, thus the existence of $(I_{100}-F_{100}^{(24)})^{-1}$ cannot be guaranteed by looking at these quantities.
	Nevertheless, it exists and the approximation obtained is accurate, $\textrm{err}_\textrm{f} = 1.8621\textrm{e}-15$ and $\max(\textrm{err}_c) =  2.5823\textrm{e}-15$.\\
	As a final choice we take a more oscillatory function $\nu=100$ and $\beta=1$ and compute an approximation for $M=1500$.
	The numerical radius $\nu(F^{(148)}_{1500}) = 45.11$ is much smaller than the upper bound $\sum_{d=0}^{N}\vert \alpha_d \vert = 796.7$.
	The approximation is accurate, $\textrm{err}_\textrm{f} = 9.9812\mathrm{e}-14$ and $\max(\textrm{err}_c) =  3.6107\mathrm{e}-14$.
	The numerical radius for the second and third choice of parameters does not guarantee the existence of $(I_{M}-F_{M}^{(N)})^{-1}$.
	Therefore, in Figure \ref{fig:pseudoSpectrum_DE}, we show the pseudospectra for several levels for $(I_M-F_M^{(N)})$ for these choices with $M=1500$.
	They indicate that even for perturbations $E$ that are relatively large in norm (up to $10^{-5}$) the spectrum of $(I_{1500}-F_{1500}^{(N)}+E)$, for both functions, remains contained in a disk centered at $(1,0)$ with a radius equal to one.
	\begin{figure}[!ht]
		\centering
		\setlength\figureheight{4.5cm}
		\setlength\figurewidth{5.4cm}
		\input{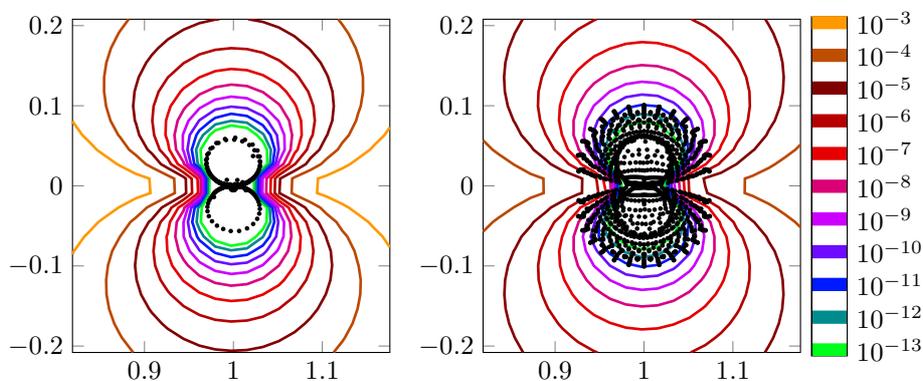}
		\caption{Spectrum ({\color{black} $\cdot$}) and pseudospectra of $(I-F_M)$ for the coefficient matrix $F_M$ of $\tilde{f}(t) = -\imath \frac{\nu}{\beta} \sin(\nu (t+1))$.
			Left: $\nu=5$, $\beta=1$ and $M=1500$. Right: $\nu=100$, $\beta=1$ and $M=1500$.}
		\label{fig:pseudoSpectrum_DE}
	\end{figure}	
	
	\subsection{A polynomial problem}
	Consider the following ODE, which appeared, e.g., in \cite{ScGhSc22},
	\begin{equation*}
		\frac{d}{d\tau} \tilde{u}(\tau) = -\imath \tau \tilde{u}(\tau), \quad  \tilde{u}(0) = 1, \quad \text{on }\tau\in\left[0,\tau_{\textrm{end}} \right]
	\end{equation*}
	the function $\tilde{f}(\tau) = -\imath \tau$ is a degree one polynomial and the solution is $\tilde{u}(\tau) = \exp(-\imath \tau^2)$.
	Since the Legendre polynomials are defined on $\left[-1,1\right]$, we perform a transformation on the ODE, mapping $\tau\in\left[0,\tau_{\textrm{end}}\right]$ onto $t\in \left[-1,1\right]$.
	This transformation is $t = \frac{2\tau}{\tau_{\textrm{end}}}-1$ and its inverse $\tau = (t+1)\frac{\tau_{\textrm{end}}}{2}$, and leads to
	\begin{equation*}
		\frac{d}{d t} \tilde{u}(t) = -\imath \left(\frac{\tau_{\textrm{end}}}{2}\right)^2 (t+1) \tilde{u}(t) ,
	\end{equation*}
	with solution $\tilde{u}(t) = \exp\left(-\frac{\imath}{2} \left(\frac{\tau_{\textrm{end}}}{2}\right)^2 (t+1) \right)$.
	The coefficient matrix $F$ of $f(t,s)=\tilde{f}(t)\Theta(t-s)$ with $\tilde{f}(t) = -\imath \left(\frac{\tau_{\textrm{end}}}{2}\right)^2 (t+1)$ is pentadiagonal and is discussed in Example \ref{example:decay}.\\
	For a fixed size of the coefficient matrix, $M=1000$, we run our procedure for $\tau_{\textrm{end}} = 25$ and $\tau_{\textrm{end}} = 50$.
	Table \ref{table:ITVOLT} shows the metrics for these cases; a good approximation is obtained in both cases even though the numerical radius is much larger than one.
	
	\begin{table}[ht]
		\centering
		\begin{tabular}{l|llll}
			$\tau_{\textrm{end}}$ & $\vert \alpha_0\vert+\vert \alpha_1\vert$ & $\nu(F_{1000}^{(1)})$ & $\mathrm{err}_c$      & $\textrm{err}_\textrm{f}$ \\ \hline
			25                    & 348.5                               & 147.6                & $5.228\mathrm{e}-14$  & $1.067\mathrm{e}-13$      \\
			50                    & 1394                                & 590.6                & $3.210\mathrm{e}-13$ & $3.008\mathrm{e}-13$    
		\end{tabular}
	\caption{Metrics for the function $\tilde{f}(t)=-\imath \left(\frac{\tau_{\textrm{end}}}{2}\right)^2 (t+1)$ and for the approximation to $\tilde{u}(t)$ obtained for $M=1000$.}
	\label{table:ITVOLT}
	\end{table}
	In Figure \ref{fig:pseudoSpectrum_ITVOLT}, the spectrum and pseudospectra for the two choices are shown.
	The eigenvalues of $(I-F_{1000}^{(N)})$ lie on a circle and, as $\tau_{\textrm{end}}$ increases, the radius of this circle increases; however, the center of the circle also shifts and the circle does not cross the real line. Thus $(I-F_{1000}^{(N)})^{-1}$ exists in both cases.
	The pseudospectra that contain the origin are those associated with large perturbations of the coefficient matrix.	
	In Figure \ref{fig:decay_ITVOLT}, the order of magnitude of the entries of the inverse for both cases is shown.
	It shows that the inverse is characterized by the decay phenomenon.
	\begin{figure}[!ht]
		\centering
		\setlength\figureheight{4.5cm}
		\setlength\figurewidth{5.4cm}
		\input{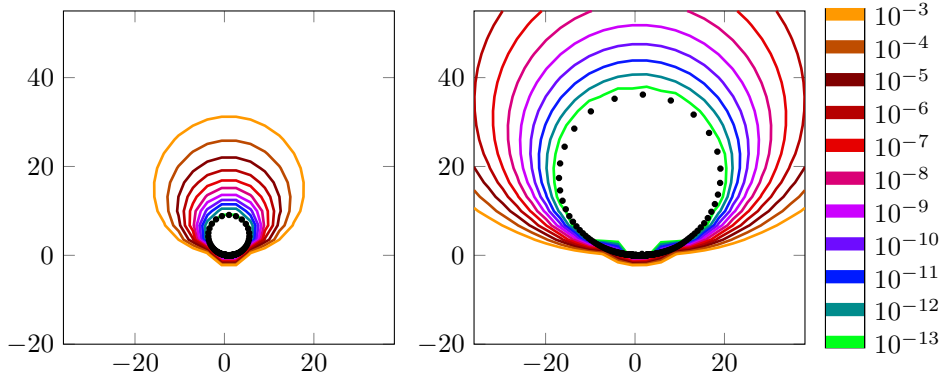}
		\caption{Spectrum ({\color{black} $\cdot$}) and pseudospectra of $(I_M-F_M^{(N)})$ for the coefficient matrix $F_M^{(N)}$ of $\tilde{f}(t) =  -\imath \left(\frac{\tau_{\textrm{end}}}{2}\right)^2 (t+1)$.
			Left: $\tau_{\textrm{end}}= 25$ and $M=1000$. Right: $\tau_{\textrm{end}}=50$ and $M=1000$.}
		\label{fig:pseudoSpectrum_ITVOLT}
	\end{figure}

	\begin{figure}[!ht]
		\centering
		\includegraphics[width=0.4\textwidth]{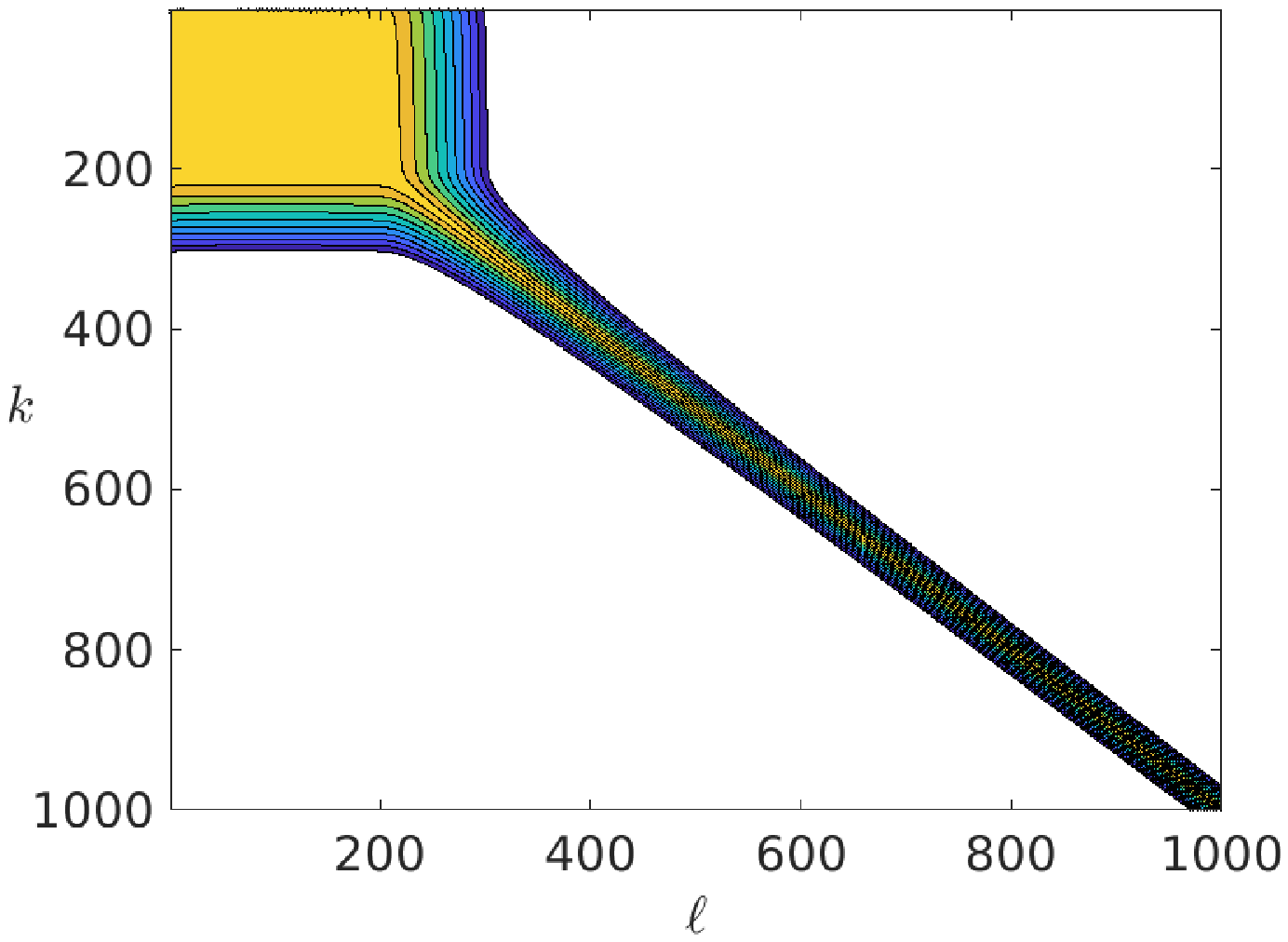}
		\includegraphics[width=0.4\textwidth]{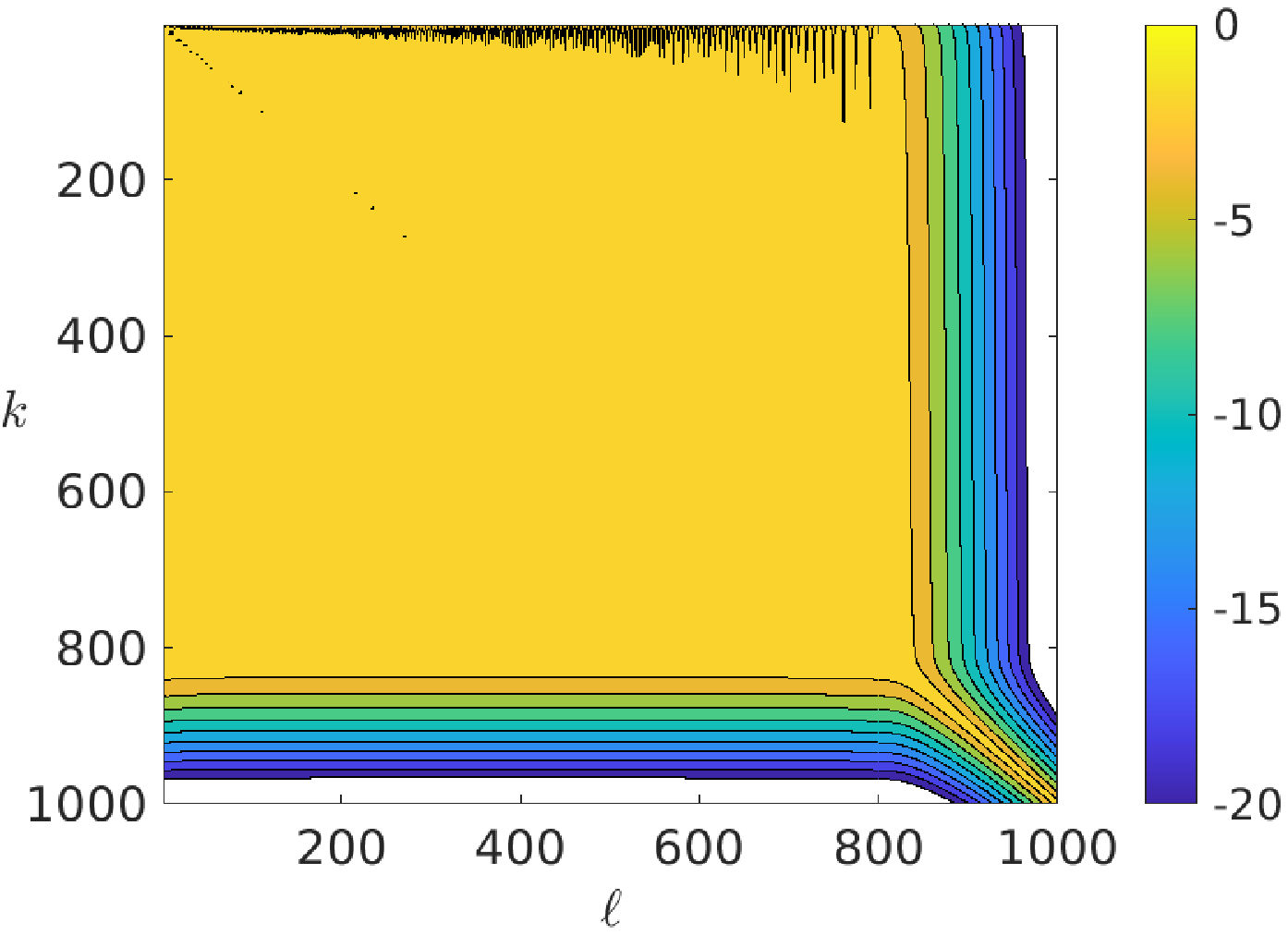}
		\caption{Order of magnitude of the entries of $(I_M-F_M^{(N)})^{-1}$ for $M=1000$, where $F_M^{(N)}$ is the coefficient matrix of $\tilde{f}(t) =  -\imath \left(\frac{\tau_{\textrm{end}}}{2}\right)^2 (t+1)$.
			Left: $\tau_{\textrm{end}}= 25$. Right: $\tau_{\textrm{end}}=50$.}
		\label{fig:decay_ITVOLT}
	\end{figure}

	Tables \ref{table:conv25} and \ref{table:conv50} show for both the choices $\tau_{\textrm{end}}=25$ and $\tau_{\textrm{end}}=50$ the function error $\textrm{err}_\textrm{f}$ and the amplitude of the last accurately computed Legendre coefficient, respectively.
	The last accurately computed Legendre coefficient for both choices is $\hat{c}_{M-1}$.
	From the tables we can conclude that, for $M$ large enough, an estimate for $\textrm{err}_\textrm{f}$ can be obtained by looking only at the Legendre coefficients, which is the expected behavior in function approximation with Legendre polynomials \cite{Tr13}.
	\begin{table}[!ht]
		\begin{minipage}[t]{0.48\textwidth}
		\centering
		\captionsetup{width=.8\linewidth}
		\begin{tabular}{l|ll}
			$M$ & $\textrm{err}_{\textrm{f}}$ & $\vert \hat{c}_{M-1}\vert$  \\ \hline
			200 & $1.8\textrm{e+}00$                & $2.7\textrm{e}-02$                            \\
			210 & $3.3\textrm{e}-01$                & $1.3\textrm{e}-02$                       \\
			220 & $1.6\textrm{e}-02$                & $1.9\textrm{e}-03$                \\
			230 & $4.6\textrm{e}-04$                & $9.0\textrm{e}-05$                                 \\
			240 & $8.0\textrm{e}-06$                & $2.2\textrm{e}-06$                                  \\
			250 & $8.5\textrm{e}-08$                & $2.9\textrm{e}-08$                        \\
			260 & $5.9\textrm{e}-10$                & $2.4\textrm{e}-10$                   \\
			270 & $2.8\textrm{e}-12$                & $1.2\textrm{e}-12$                      \\
			280 & $9.9\textrm{e}-14$                & $2.1\textrm{e}-14$                      \\
			290 & $8.4\textrm{e}-14$                & $1.2\textrm{e}-14$                     \\
			300 & $8.7\textrm{e}-14$                & $1.2\textrm{e}-14$                    
		\end{tabular}
		\caption{Error on the Legendre coefficients and the magnitude of the last accurate Legendre coefficient $\hat{c}_{M-N}$ for increasing $M$ for $\tau_{\textrm{end}} = 25$.}
		\label{table:conv25}
		\end{minipage}
		\hspace{-0.3cm}
		\begin{minipage}[t]{0.48\textwidth}
		\centering
		\captionsetup{width=.75\linewidth}
			\begin{tabular}{l|ll}
				$M$ & $\textrm{err}_{\textrm{f}}$ & $\vert \hat{c}_{M-1}\vert$ \\ \hline
				830 & $8.3\textrm{e}-02$                & $2.4\textrm{e}-03$                \\
				840 & $1.1\textrm{e}-02$                & $5.5\textrm{e}-04$                \\
				850 & $1.1\textrm{e}-03$                & $8.4\textrm{e}-05$                \\
				860 & $9.9\textrm{e}-05$                & $9.3\textrm{e}-06$                \\
				870 & $7.0\textrm{e}-06$                & $8.0\textrm{e}-07$                \\
				880 & $4.0\textrm{e}-07$                & $5.4\textrm{e}-08$                \\
				890 & $1.9\textrm{e}-08$                & $3.0\textrm{e}-09$                \\
				900 & $7.7\textrm{e}-10$                & $1.3\textrm{e}-10$                \\
				910 & $2.6\textrm{e}-11$                & $5.0\textrm{e}-12$                \\
				920 & $9.6\textrm{e}-13$                & $1.6\textrm{e}-13$                \\
				930 & $3.1\textrm{e}-13$                & $1.6\textrm{e}-14$               
			\end{tabular}
		\caption{Error on the Legendre coefficients and the magnitude of the last accurate Legendre coefficient $\hat{c}_{M-1}$ for increasing $M$ for $\tau_{\textrm{end}} = 50$.}
		\label{table:conv50}
		\end{minipage}
	\end{table}

	\subsection{NMR-inspired problem}
	The following experiment is inspired by a problem in nuclear magnetic resonance spectroscopy (NMR) where the matrix ODE
	\begin{equation*}
		\frac{d}{dt} \tilde{A}(t) = -2\imath \pi \tilde{H}(t) \tilde{A}(t), \quad \left[0,t_{\textrm{end}}\right],
	\end{equation*}
	governs the dynamics of, e.g., a magic angle spinning experiment.
	The matrix valued function $\tilde{H}(t)$ is the Hamiltonian and is of size $2^\ell\times 2^\ell$, where $\ell$ is the number of spins in the sample \cite{Le08}.
	The functions appearing in $\tilde{H}(t)$ are of the form
	\begin{equation}\label{eq:ftil_NMR}
		\tilde{f}(t) = -2 \imath \pi (\alpha + \beta \cos(2\pi \nu t) + \gamma \cos(4\pi \nu t)),
	\end{equation}
	where $\alpha\in \left[-1,1\right]$ and $\beta,\gamma\in \left[100,5000\right]$ are typical ranges for these parameters.
	In a magic angle spinning experiment \cite{HaSp98}, the sample spins at an angular velocity $\nu\in \left[5000, 120 000\right]$ chosen by the user. The experiment typically runs for about $t_{\textrm{end}}=10^{-2}$ seconds.
	Here, we consider the simpler problem of a scalar ODE 
	\begin{equation*}
		\frac{d}{dt} \tilde{u}(t) = \tilde{f}(t) \tilde{u}(t), \quad \left[0,t_{\textrm{end}}\right].
	\end{equation*}
	We lose the connection to NMR, but studying this problem provides insight into the capabilities of our proposed procedure to tackle the physically relevant matrix case.
	For $\alpha = 0.05$, $\beta=\gamma=3450$, $\nu=5000$ and $M=1500$ we compute an approximation to $\tilde{u}(t)$.
	The function approximation error is $\textrm{err}_{\textrm{f}} = 1.5994\textrm{e}-04$, increasing $M$ will improve on this error.
	However, an accuracy of the order $1\textrm{e}-4$ suffices for NMR experiments, where one is limited by the accuracy of the measurements.\\
	If the frequency is increased to $\nu=120 000$ the function and solution require more coefficients to be represented accurately, i.e., we require a larger $M$.
	Alternatively, the interval $\left[0,t_{\textrm{end}}\right]$ can be split into smaller subintervals and the ODE is then solved on each of these subintervals separately.
	Suppose we split the interval into 20 equal subintervals, then the first subproblem is the ODE on the interval $\left[0,\frac{t_{\textrm{end}}}{20}\right] = \left[0, 5 \textrm{e}{-04}\right]$.
	The approximate solution to this subproblem obtained for $M=1500$ has $\textrm{err}_\textrm{f} = 1.4101\textrm{e}-07$ and $\textrm{err}_c= 1.4087\textrm{e}-08$.
	Thus, by splitting the interval into 20 subintervals the ODE can be solved accurately for the highest frequency of interest.
	
	The spectrum and pseudospectra for both functions are shown in Figure \ref{fig:spectrum_NMR}.
	\begin{figure}[!ht]
		\centering
		\setlength\figureheight{4.5cm}
		\setlength\figurewidth{5.4cm}
		\input{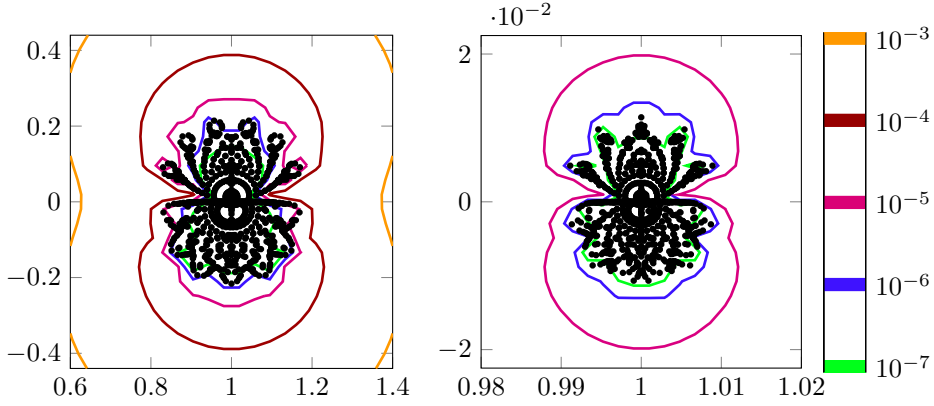}
		\caption{Spectrum ({\color{black} $\cdot$}) and pseudospectrum of $(I-F_M)$ for the coefficient matrix $F_M$ of $\tilde{f}(t)$ \eqref{eq:ftil_NMR}.
			Left: $\nu=5000$, $t_\textrm{end}=10^{-2}$ and $M=1500$. Right: $\nu=120000$, $t_\textrm{end}=\frac{1}{20}10^{-2}$ and $M=1500$.}
		\label{fig:spectrum_NMR}
	\end{figure}	
\newpage
	\section{Conclusion}
	We presented a new approach for the solution of linear non-autonomous scalar ODEs based on the discretization of the $\star$-product by using expansions in series of Legendre polynomials. This approach effectively transforms operations defined on bivariate distributions to operations in a (sub)algebra of infinite matrices.
	We studied the properties of such matrices and used them to prove the existence of the ODE solution in the infinite matrix algebra. Once the Legendre polynomial series is truncated, the ODE solution is accessible by solving a (finite) linear system. We studied the truncation error, proving that obtaining accurate approximations from this finite system is possible. We also presented effective methods to compute the discretization and tested the method on several numerical examples.
	
	The new method was numerically analyzed in the scalar case. The scalar analysis is a fundamental step toward understanding the more general case of systems of non-autonomous linear ODEs \cite{GiPo20,PoVB22}. In fact, the authors are developing a method for this more general case whose analysis and understanding will be built on the crucial results presented here. 
	
	\appendix
	\section{Proof of Lemma \ref{lemma:infNormBd}}\label{app:A}
	In this proof, it is easier to work with the formula based on Legendre polynomials $\dot{p}_k(t)$, which are normalized such that $\dot{p}_k(1)=1$.
	Then, setting $s=(a+b+c)/2$, we have
	\begin{align*}
		\dot{\mathcal{F}}_{a,b,c} &:= \int_{-1}^{1} \dot{p}_a(\tau) \dot{p}_b(\tau) \dot{p}_c(\tau) d\tau\\
		&= 
		\begin{cases}
			0\quad \text{if } a+b+c \text{ odd},\\
			0 \quad \text{if } s<\max(a,b,c),\\
			0 \quad \text{if } a<\vert b-c \vert,\\
			\frac{2}{a+b+c+1}\begin{pmatrix}
				2(s-a)\\
				s-a
			\end{pmatrix} \begin{pmatrix}
				2(s-b)\\
				s-b
			\end{pmatrix} \begin{pmatrix}
				2(s-c)\\
				s-c
			\end{pmatrix} \begin{pmatrix}
				2s\\
				s
			\end{pmatrix}^{-1} \quad \text{else}.
		\end{cases}\\
		&= \begin{cases}
			0\quad \text{if } a+b+c \text{ odd},\\
			0 \quad \text{if } b+c<a\\
			0 \quad \text{if } a<\alpha:=\vert b-c\vert,\\
			\frac{1}{2^{(2a-1)}}  {\frac{1}{(a+b+c+1)} \left(\prod_{j=1}^{a}\frac{-a+b+c+2j}{-a+b+c+2j-1}\right)} {\frac{\prod_{j=(\frac{a+\alpha}{2}+1)}^{a+\alpha}j^2}{\prod_{j=1}^{\frac{a-\alpha}{2}}j^2 \prod_{j=(a-\alpha+1)}^{a+\alpha} j}}\quad \text{else}.
		\end{cases}
	\end{align*}
	First, we need the following property.
	\begin{property}\label{prop:derivative}
		The following equality holds for $ x \in \mathbb{R}$ and $d=1,2,\dots$,
		\begin{equation*}
			\frac{\partial}{\partial x}\left( \prod_{j=1}^{d}\frac{2x+2j}{2x+2j-1}\right)
			= -2  \prod_{j=1}^{d}\frac{2x+2j}{2x+2j-1} \sum_{j=1}^{d} \frac{1}{(2x+2j)(2x+2j-1)}.
		\end{equation*}
	\end{property}
	\begin{proof}
		Follows by induction on $d$.
		For $d=1$:
		\begin{equation*}
			\frac{\partial}{\partial x} \frac{2x+2}{2x+1} = -2 \frac{2x+2}{2x+1} \frac{1}{(2x+1)(2x+2)}.
		\end{equation*}
		Assume the equality holds for $d$, consider $d+1$:
		\begin{align*}
			\frac{\partial}{\partial x} \left(\prod_{j=1}^{d+1} \frac{2x+2j}{2x+2j-1}\right) &= \frac{\partial}{\partial x} \left(\frac{2x+2d+2}{2x+2d+1} \prod_{j=1}^{d} \frac{2x+2j}{2x+2j-1}\right)\\
			&= -2 \prod_{j=1}^{d+1} \frac{2x+2j}{2x+2j-1} \sum_{j=1}^{d+1} \frac{1}{(2x+2j)(2x+2j-1)}.
		\end{align*}
		This proves the statement.
	\end{proof}
	
		\begin{lemma}[Monotonous decay along diagonals]\label{lemma:monoDecay}
		For given integers $d\geq 0$ and $k\leq d$ , the following equality is satisfied for $i=1,2,\dots$,
		\begin{equation*}
			\dot{\mathcal{F}}_{d,k,d-k} > \dot{\mathcal{F}}_{d,k+i,d-k+i} > 0.
		\end{equation*}
	\end{lemma}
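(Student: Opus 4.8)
The plan is to reduce the claimed chain of inequalities to the strict monotonicity of a single scalar function. Fix $d\ge 0$ and $0\le k\le d$, and for $i\ge 0$ set $a=d$, $b=k+i$, $c=d-k+i$. Then $a+b+c=2d+2i$ is even, $s:=\frac{a+b+c}{2}=d+i\ge\max(a,b,c)$ because $0\le k\le d$, and $a=d\ge|2k-d|=|b-c|=:\alpha$; hence for every $i\ge 0$ — in particular $i=0$, which yields $\dot{\mathcal{F}}_{d,k,d-k}$ — none of the three degenerate cases of the formula for $\dot{\mathcal{F}}_{a,b,c}$ (given at the start of Appendix~\ref{app:A}) is triggered, the ``else'' branch applies, and all the quantities involved are strictly positive. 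This already gives $\dot{\mathcal{F}}_{d,k+i,d-k+i}>0$.

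Next I would isolate the $i$-dependence. Since $a=d$ and $\alpha=|2k-d|$ do not depend on $i$, the prefactor $2^{-(2a-1)}$ together with the combinatorial fraction $\dfrac{\prod_{j=(a+\alpha)/2+1}^{a+\alpha}j^2}{\prod_{j=1}^{(a-\alpha)/2}j^2\,\prod_{j=a-\alpha+1}^{a+\alpha}j}$ is a single constant $C>0$ (all the products run over positive integers, using that $a+b+c$ even forces $(a\pm\alpha)/2\in\mathbb{Z}$ and the branch condition gives $a-\alpha\ge 0$). Moreover $-a+b+c+2j=2i+2j$ and $a+b+c+1=2d+2i+1$, so
\[
\dot{\mathcal{F}}_{d,k+i,d-k+i}=C\,h(i),\qquad h(x):=\frac{1}{2d+2x+1}\prod_{j=1}^{d}\frac{2x+2j}{2x+2j-1},
\]
regarded as a smooth function of a real variable $x\ge 0$. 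It then suffices to show that $h$ is strictly decreasing on $[0,\infty)$.

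For this I would differentiate $h$ logarithmically, using Property~\ref{prop:derivative} for the product and the elementary derivative of $1/(2d+2x+1)$, obtaining
\[
\frac{h'(x)}{h(x)}=-\frac{2}{2d+2x+1}-2\sum_{j=1}^{d}\frac{1}{(2x+2j)(2x+2j-1)}<0\qquad(x\ge 0).
\]
Since $h(x)>0$ this gives $h'(x)<0$, hence $h(0)>h(1)>h(2)>\cdots>0$; multiplying through by $C>0$ yields $\dot{\mathcal{F}}_{d,k,d-k}>\dot{\mathcal{F}}_{d,k+i,d-k+i}>0$ for all $i\ge 1$, which is the assertion.

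The argument is essentially routine once this reduction is made; the main point requiring care is the index bookkeeping — in particular verifying that the degenerate cases of the triple-product formula never occur for the triples $(d,\,k+i,\,d-k+i)$, which is precisely where the hypotheses $0\le k\le d$ (and the resulting $c=d-k\ge 0$) enter. A calculus-free alternative would be to compare $h(i)$ and $h(i+1)$ directly, writing their ratio as a finite product and checking each factor is $<1$, but I expect the logarithmic-derivative route above to be the cleanest to write up.
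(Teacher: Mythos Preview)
Your proof is correct and follows essentially the same approach as the paper's: isolate the $i$-independent constant, define the same function $h(x)$, and use Property~\ref{prop:derivative} to show its derivative is negative. The only cosmetic differences are that you handle $d=0$ uniformly via the empty product rather than as a separate case, and you are more explicit than the paper in verifying that the non-degenerate branch of the formula for $\dot{\mathcal{F}}$ applies for every $i\ge 0$.
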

	\begin{proof}
		For $d=0$ the integral of the triple product is
		\begin{equation*}
			\dot{\mathcal{F}}_{0,b,c}\int_{-1}^1 \dot{p}_0(\tau) \dot{p}_b(\tau) \dot{p}_c(\tau) d\tau = \int_{-1}^1 \dot{p}_b(\tau) \dot{p}_c(\tau) d\tau = \frac{2}{b+c+1},
		\end{equation*}
		which clearly satisfies $\dot{\mathcal{F}}_{0,0,0} > \dot{\mathcal{F}}_{0,i,i} > 0$.
		Next, we prove the statement for $d\geq 1$.
		The elements $\dot{\mathcal{F}}_{d,k+i,d-k+i}$ are clearly positive and nonzero.
		Set $\alpha =  \vert 2k-d\vert$, then
		\begin{equation*}
			\dot{\mathcal{F}}_{d,k+i,d-k+i} = \frac{1}{2^{(2d-1)}} \frac{1}{d+2i+1} \prod_{j=1}^{d}\frac{2i+2j}{2i+2j-1} \frac{\prod_{j=\frac{d+\alpha}{2}+1}^{d+\alpha}j^2}{\prod_{j=1}^{\frac{d-\alpha}{2}}j^2 \prod_{j=d-\alpha+1}^{d+\alpha}j}.
		\end{equation*}
		The term $C(d,\alpha):= \frac{1}{2^{(2d-1)}}\frac{\prod_{j=\frac{d+\alpha}{2}+1}^{d+\alpha}j^2}{\prod_{j=1}^{\frac{d-\alpha}{2}}j^2 \prod_{j=d-\alpha+1}^{d+\alpha}j}$ is independent of $i$.
		To show that the expression decreases as $i$ increases, we take the derivative with respect to $0\leq x\in\mathbb{R}$ and use Property \ref{prop:derivative}
		\begin{align*}
			&\frac{\partial}{\partial x}\dot{\mathcal{F}}_{d,k+x,d-k+x} = C(d,\alpha)  \frac{\partial}{\partial x}\left(\frac{1}{d+2x+1} \prod_{j=1}^{d}\frac{2x+2j}{2x+2j-1}\right)\\
			& = -\frac{2 C(d,\alpha)}{d+2x+1}   \prod_{j=1}^{d}\frac{2x+2j}{2x+2j-1} \left(\frac{1}{d+2x+1}+\sum_{j=1}^{d} \frac{1}{(2x+2j)(2x+2j-1)}\right)\\
			&< 0.
		\end{align*}
		Since $\dot{\mathcal{F}}> 0$ for $i=0,1,\dots$, replacing $x$ with integers $i\geq 0$ proves the statement.
	\end{proof}

 \begin{lemma}\label{lemma:max:F}
 For given integers $d\geq 0$ and $k \leq d$, 
 \begin{equation*}
     \dot{\mathcal{F}}_{d,k,d-k} \leq \frac{2}{2d + 1}.
 \end{equation*}
 \end{lemma}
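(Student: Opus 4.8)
The plan is to evaluate the explicit (rescaled) formula for $\dot{\mathcal{F}}_{d,k,d-k}$ recalled at the beginning of this appendix and reduce the claim to the classical inequality $\binom{2k}{k}\binom{2(d-k)}{d-k}\le\binom{2d}{d}$. First I would verify that the triple $(a,b,c)=(d,k,d-k)$ falls into the nontrivial (``else'') branch of that formula: $a+b+c=2d$ is even, so the first case does not apply; $s:=(a+b+c)/2=d\ge\max(d,k,d-k)$ since $0\le k\le d$, so the second case does not apply; and $a=d\ge|2k-d|=|b-c|$ for the same reason, so the third case does not apply. Hence the closed form is valid with $s=d$, $s-a=0$, $s-b=d-k$, $s-c=k$, which gives
\[
\dot{\mathcal{F}}_{d,k,d-k}=\frac{2}{2d+1}\binom{0}{0}\binom{2(d-k)}{d-k}\binom{2k}{k}\binom{2d}{d}^{-1}=\frac{2}{2d+1}\cdot\frac{\binom{2k}{k}\binom{2(d-k)}{d-k}}{\binom{2d}{d}}.
\]

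It then remains to show $\binom{2k}{k}\binom{2(d-k)}{d-k}\le\binom{2d}{d}$. The cleanest route is Vandermonde's identity: since $2k\le 2d$,
\[
\binom{2d}{d}=\sum_{j=0}^{d}\binom{2k}{j}\binom{2(d-k)}{d-j},
\]
and the summand with $j=k$ equals $\binom{2k}{k}\binom{2(d-k)}{d-k}$; as all summands are nonnegative, this single term is bounded by the whole sum. Substituting back yields $\dot{\mathcal{F}}_{d,k,d-k}\le 2/(2d+1)$, with equality precisely when the Vandermonde sum collapses to the $j=k$ term, i.e.\ when $k\in\{0,d\}$ (consistent with the case $d=0$, where $\dot{\mathcal{F}}_{0,0,0}=\int_{-1}^{1}1\,d\tau=2$).

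I do not expect any real obstacle here: the argument is essentially bookkeeping. The only points requiring care are confirming membership in the ``else'' branch (done above) and performing the substitution $s=d$ correctly; the inequality itself is the standard ``product of central binomial coefficients'' bound, for which the Vandermonde argument is the shortest proof. If one preferred to avoid invoking Vandermonde, an equivalent induction on $d$ (peeling off one unit of $k$ or of $d-k$ at a time) would also work, but it is longer and less transparent.
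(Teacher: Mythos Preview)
Your proof is correct. Both you and the paper reduce the claim to the explicit formula
\[
\dot{\mathcal{F}}_{d,k,d-k}=\frac{2}{2d+1}\,\frac{\binom{2k}{k}\binom{2(d-k)}{d-k}}{\binom{2d}{d}},
\]
so that everything comes down to the binomial inequality $\binom{2k}{k}\binom{2(d-k)}{d-k}\le\binom{2d}{d}$. The difference is in how that inequality is established: the paper argues that the product $k\mapsto\binom{2(d-k)}{d-k}\binom{2k}{k}$ is monotone on each half of $\{0,\dots,d\}$ by comparing consecutive values and reducing to a quadratic inequality in $k$, so the maximum is attained at the endpoints. Your Vandermonde argument is shorter and more transparent, since it exhibits the product directly as a single nonnegative summand in $\binom{2d}{d}=\sum_{j}\binom{2k}{j}\binom{2(d-k)}{d-j}$; it also immediately identifies the equality cases $k\in\{0,d\}$. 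The paper's ratio approach, on the other hand, gives slightly more information (unimodality of the sequence), though that extra structure is not used elsewhere.
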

 \begin{proof}
   In order to prove this lemma it is sufficient to show that
   \begin{equation*}
   \max_{k=0,\dots,d}
       \begin{pmatrix}
				2(d-k)\\
				d-k
			\end{pmatrix} \begin{pmatrix}
				2k\\
				k
			\end{pmatrix} 
   = \begin{pmatrix}
				2d\\
				d
			\end{pmatrix}.
   \end{equation*}
   Note that, because of symmetry, this is equivalent to showing that, for $d/2 \leq k < d$ and $d>0$, it holds that
      \begin{equation*}
          \begin{pmatrix}
				2(d-k)\\
				d-k
			\end{pmatrix} \begin{pmatrix}
				2k\\
				k
			\end{pmatrix} 
   \geq        \begin{pmatrix}
				2(d-k) -2\\
				d-k -1
			\end{pmatrix} \begin{pmatrix}
				2k-2\\
				k-1
			\end{pmatrix}.
   \end{equation*}
   The equation above can be reformulated as the quadratic problem
   \begin{equation*}
       4(2d-2k-1)(2k-1) \geq k(d-k).
   \end{equation*}
   For $d\geq2$, the inequality above is satisfied for every $d/2 \leq k < d$. The proof is then concluded since the cases $d=0, 1$ are trivial.
 \end{proof}

	\begin{proof}[Proof of Lemma \ref{lemma:infNormBd}]
		The coefficients of $B^{(d)} = \left[b_{k,\ell}^{(d)} \right]_{k,\ell=0}^\infty$ are given by the formula
		\begin{equation*}
			b_{k,\ell}^{(d)} = \frac{\sqrt{(2d+1)(2k+1)}}{\sqrt{8} \sqrt{2l+1}} \left( \dot{\mathcal{F}}_{d,k,\ell+1} - \dot{\mathcal{F}}_{d,k,\ell-1}\right).
		\end{equation*}
		Then, by the definition of the infinity norm, we have
		\begin{align*}
			\Vert B^{(d)} \Vert_\infty &= \max_{k\geq 0} \sum_{\ell = 0}^{\infty} \vert b_{k,\ell}^{(d)} \vert = \frac{\sqrt{2d+1}}{\sqrt{8}} \max_{k\geq 0} \sum_{\ell=0}^{\infty} \left( \frac{\sqrt{2k+1}}{\sqrt{2\ell+1}} \left\vert \dot{\mathcal{F}}_{d,k,\ell+1} - \dot{\mathcal{F}}_{d,k,\ell-1} \right\vert \right) \\
			&\leq \frac{\sqrt{2d+1}}{\sqrt{8}} \max_{k\geq 0} \sum_{\ell=0}^{\infty} \left( \frac{\sqrt{2k+1}}{\sqrt{2\ell+1}} \left\vert \dot{\mathcal{F}}_{d,k,\ell+1} \right\vert +  \left\vert \dot{\mathcal{F}}_{d,k,\ell-1} \right\vert \right) \\
			&= \frac{\sqrt{2d+1}}{\sqrt{8}} \left(\max_{k\geq 0} \sum_{\ell=0}^{\infty}  \frac{\sqrt{2k+1}}{\sqrt{2\ell+1}} \left\vert  \dot{\mathcal{F}}_{d,k,\ell+1} \right\vert + \max_{k\geq 0} \sum_{\ell=0}^{\infty}  \frac{\sqrt{2k+1}}{\sqrt{2\ell+1}}  \left\vert \dot{\mathcal{F}}_{d,k,\ell-1} \right\vert \right).
		\end{align*}
		Consider the first term and use Lemma \ref{lemma:monoDecay}, which implies that $\dot{\mathcal{F}}_{d,k+i,d-k+i}$ for any $i\geq 0$ can be bounded from above by $\dot{\mathcal{F}}_{d,k,d-k}$.
		We can also bound the term $\frac{\sqrt{2k+1}}{\sqrt{2\ell+1}}$:
		\begin{align*}
			\underset{\ell = d-k}{\max_{0\leq k\leq d}} \frac{2k+1}{2\ell +1} = \max_{0\leq k\leq d} \left(\frac{2d+2}{2d-(2k-1)}\right) -1 = \frac{2d+2}{2d -(2d-1)} -1 = 2d+1.
		\end{align*}
		Since $\dot{\mathcal{F}}_{d,k,\ell+1} = 0$ for $\vert k-\ell-1\vert >d$, the infinite sum can be bounded by a finite sum:
		\begin{equation*}
			\max_{k\geq 0} \sum_{\ell=0}^{\infty}  \frac{\sqrt{2k+1}}{\sqrt{2\ell+1}} \left\vert  \dot{\mathcal{F}}_{d,k,\ell+1} \right\vert \leq \sqrt{2d+1}  \sum_{k=0}^{d} \dot{\mathcal{F}}_{d,k,d-k}.
		\end{equation*}
		Since $\vert \dot{\mathcal{F}}_{d,d,-1}\vert = \vert \dot{\mathcal{F}}_{d,d,0}\vert$, similar arguments lead to the following bound for the second term,
		\begin{equation*}
			\max_{k\geq 0} \sum_{\ell= 0}^{\infty} \frac{\sqrt{2k+1}}{\sqrt{2\ell+1}} \vert \dot{\mathcal{F}}_{d,k,\ell-1} \vert \leq \sqrt{2d+1} \dot{\mathcal{F}}_{d,d,0} + \sqrt{2d+1} \sum_{k=0}^{d} \dot{\mathcal{F}}_{d,k,d-k}.
		\end{equation*}
		Hence, we obtain the bound
		\begin{align*}
			\Vert B^{(d)} \Vert_\infty  \leq \frac{2d+1}{\sqrt{8}} \left( \dot{\mathcal{F}}_{d,d,0} + 2 \sum_{k=0}^{d} \dot{\mathcal{F}}_{d,k,d-k} \right).
		\end{align*}
		Now we plug in the formula for $\dot{\mathcal{F}}_{d,k,d-k}$, note that $d+k+d-k = 2d$ and $-d+k+d-k = 0$ and let $\alpha(k) := \vert 2k-d\vert$, then 
 \begin{align*}
  			\Vert B^{(d)} \Vert_\infty  &\leq \frac{2d+1}{\sqrt{8}} \left( \dot{\mathcal{F}}_{d,d,0} + 2 \sum_{k=0}^{d} \dot{\mathcal{F}}_{d,k,d-k} \right) \leq \frac{2d+1}{\sqrt{8}} \left( 1 + 2 \sum_{k=0}^{d} \frac{2}{2d+1} \right)\\
			&\leq \frac{2d+1}{\sqrt{8}} \left( 1 + \frac{4(d+1)}{2d+1} \right)  \leq \frac{6d+5}{\sqrt{8}} < 3d+2,
 \end{align*} 
 which proves Lemma~\ref{lemma:infNormBd}. 
	\end{proof}

	\section*{Acknowledgments}

	The second author thanks Marcus Webb for discussion in an early stage of this project.

	\section*{Funding}

	This  work  was  supported  by  Charles  University  Research programs PRIMUS/21/SCI/009 and UNCE/SCI/023 and by the Magica project ANR-20-CE29-0007 funded by the French National Research Agency.

	\bibliographystyle{siam}

\end{document}